\definecolor{mypink1}{rgb}{0.858, 0.188, 0.478}
\definecolor{mypink2}{RGB}{219, 48, 122}
\definecolor{mypink3}{cmyk}{0, 0.7808, 0.4429, 0.1412}
\definecolor{mygray}{gray}{0.6}
\definecolor{mylightred}{RGB}{255,200,200}
\definecolor{mylightblue}{RGB}{30,144,255}
\definecolor{mylightgreen}{RGB}{0,153,0}
\DeclareMathOperator{\dis}{dist}
\DeclareMathOperator{\inte}{int}
\DeclareMathOperator{\dom}{dom}
\DeclareMathOperator{\cl}{cl}
\journalname{Soft Computing}
\begin{document}

\title{Weak sharp minima for interval-valued functions and its primal-dual characterizations using generalized Hukuhara subdifferentiability
}

\titlerunning{WSM for IVFs and its characterizations using $gH$-subdifferentiability}

\author{Krishan Kumar* \and Debdas Ghosh     \and Gourav Kumar  
}

\institute{Debdas Ghosh,  (debdas.mat@iitbhu.ac.in) \at Department of Mathematical Sciences, Indian Institute of Technology (Banaras Hindu University) Varanasi, Uttar Pradesh  221005, India
\and Krishan Kumar *Corresponding author (krishankumar.rs.mat19@itbhu.ac.in) \at Department of Mathematical Sciences, Indian Institute of Technology (Banaras Hindu University) Varanasi 
\and Gourav Kumar (gouravkr.rs.mat17@iitbhu.ac.in) \at  Department of Mathematical Sciences, Indian Institute of Technology (Banaras Hindu University) Varanasi 
}


\date{Received: date / Accepted: date}

\maketitle

\begin{abstract}
This article introduces the concept of weak sharp minima (WSM) for convex interval-valued functions (IVFs). To identify a set of WSM of a convex IVF, we provide its primal and dual characterizations. The primal characterization is given in terms of $gH$-directional derivatives. On the other hand, to derive dual characterizations, we propose the notions of the support function of a subset of $I(\mathbb{R})^{n}$ and $gH$-subdifferentiability for convex IVFs. Further, we develop the required $gH$-subdifferential calculus for convex IVFs. Thereafter, by using the proposed $gH$-subdifferential calculus, we provide dual characterizations for the set of WSM of convex IVFs.

\keywords{Interval-valued function \and $gH$-directional derivative \and $gH$-subgradient \and Interval optimization \and Weak sharp minima}
\end{abstract}

\section{Introduction}
Due to the presence of uncertainty,  deterministic optimization fails to represent many real-life optimization problems. In such cases, we need to proceed with the tools of uncertain optimization. If the uncertainty is given by a random variable, then these optimization problems come under the umbrella of stochastic optimization. On the other, if the uncertainty is given by a membership function, then these optimization problems are solved with the techniques of fuzzy optimization. Also, it is seen that the uncertainty of many practical problems is expressed using closed and bounded intervals. Thus, interval optimization is an indispensable way to deal with the uncertainty present in many real-life problems.

In 1966, Moore \cite{moore1966interval} introduced interval analysis to investigate interval-valued functions (IVFs). In \cite{moore1966interval}, Moore extensively gave arithmetic of intervals. Subsequently, there was a need to improve this arithmetic \cite{hukuhara1967integration}, especially the subtraction. 
Due to this, Hukuhara \cite{hukuhara1967integration} presented a notion of the difference between intervals, which is known as Hukuhara difference ($H$-difference). Stefanini and Bede \cite{stefanini2009generalized} proposed an extended version of $H$-difference, known as generalized Hukuhara difference ($gH$-difference), which has been comprehensively adopted in interval analysis.

We know that the solution concepts of optimization problems depend widely on ordering the range set of the objective function. Unlike real numbers, the set of intervals is not linearly ordered. Thus, to introduce a solution concept for optimization problems under interval uncertainties, many partial ordering relations of intervals were proposed in the literature (see \cite{ishibuchi1990multiobjective,sengupta2001interpretation,wu2008interval,wu2008wolfe,jiang2008nonlinear,bhurjee2012efficient}, and the references therein). With the help of the existing ordering concepts of a pair of intervals, many theories and methods have been developed regarding solutions of optimization problems with IVFs or of interval optimization problems (IOPs) \cite{shaocheng1994interval,inuiguchi1995minimax,mraz1998calculating,wu2009karush,wu2010duality,jana2014solution}. Inuiguchi et al.  \cite{inuiguchi1995minimax} proposed treatment of optima for IVFs by minimax regret criteria. Chanas and Kutcha \cite{chanas1996multiobjective} gave a solution concept based on a preference relation of intervals. A robust efficient solution for interval linear porgramming was given in \cite{ida2003portfolio}. Chen et al. \cite{chen2004interval} reported a solution concept by midpoint deterministic approach. Wu \cite{wu2007karush} defined type I and type II $LU$-optimal solution concepts similar to Pareto optimality. A survey on the different ordering of intervals and related optimality concepts can be found in  \cite{jiang2012new,ghosh2020variable} and from their references.

The major developments on IOPs started after a rich calculus of IVFs was ready to be used. Hukuhara \cite{hukuhara1967integration} laid the foundation to develop the calculus of IVFs by introducing the concept of $H$-differentiability of IVFs. However, this definition of $H$-differentiability is restrictive \cite{chalco2013calculus}. To overcome the deficiencies of $H$-differentiability, Stefanini and Bede \cite{stefanini2009generalized} proposed the notion of $gH$-differentiability for IVFs. Later, by using $gH$-differentiability, many concepts on calculus have been developed, for instance, see \cite{chalco2013calculus,lupulescu2013hukuhara,ghosh2020generalized}. In 2007, Wu \cite{wu2007karush} proposed two solution concepts by considering two partial ordering concepts on the set of all closed intervals and derived KKT optimality conditions for IOPs using $H$-derivative. Subsequently, Wu investigated KKT optimality conditions for multi-objective IOPs \cite{wu2009karush}. In 2012, Bhurjee et al. \cite{bhurjee2012efficient} developed a methodology to study the existence of the solution of general IOPs by expressing IVFs in the parametric form. Chalco-cano et al. \cite{chalco2013optimality} derived KKT optimality conditions for IOPs using $gH$-derivative and explained the advantages of using $gH$-derivative instead of $H$-derivative.  In 2016, Singh et al.  \cite{singh2016kkt}  proposed the concept of Pareto optimal solution for the interval-valued multi-objective programming problems. Many other researchers have also proposed optimality conditions and solution concepts for IOPs, see for instance \cite{ahmad2019sufficiency,ghosh2017newton,ghosh2018saddle,ghosh2019extended,treanctua2021class} and the  references therein.

\subsection{Motivation and work done}
To analyse optimization problems with multiple minima points, Burke and Ferris \cite{burke1993weak} introduced the notion of WSM in conventional optimization. It is known that WSM plays an important role in the sensitivity analysis and convergence analysis of conventional optimization problems \cite{burke1993weak,zhou2012new,ferris1990iterative}.  It is also seen that many algorithms exhibit finite termination at WSM \cite{burke1993weak,ferris1990iterative,zhou2012new,matsushita2012finite,wang2015finite}. Motivated by these properties and wide applications of WSM in conventional optimization, in this article, we attempt to propose and mathematically characterize the notion of WSM for convex IVFs. To give characterizations of WSM for convex IVFs, we defined $gH$-subdifferentiability for convex IVFs and support function of a subset of $I(\mathbb{R})^{n}$. Some required fundamental characteristics of $gH$-subdifferential set are proposed. Few related results on the support function of a nonempty subset of $I(\mathbb{R})^{n}$ are also derived.

\subsection{Delineation}
The article is presented in the following manner. In Section \ref{Sec2}, basic terminologies and definitions on intervals and IVFs are provided. In Section \ref{Sec3}, we propose the concept of the support function of a subset of $I(\mathbb{R})^{n}$; alongside, few necessary results on extended support function are also given. Next, we derive the idea of $gH$-subdifferentiability for  convex IVFs, in Section \ref{sec4_new} that are required in the subsequent sections.  The concept of  WSM for convex IVFs is presented in Section \ref{Sec4}; further, we give primal and dual characterizations of WSM. Lastly, the conclusion and future scopes are given in Section \ref{Sec5}.  

\section{Preliminaries and Terminologies}\label{Sec2}
In this article, the following notations are used throughout. 
\renewcommand\labelitemi{\raisebox{0.5ex}{\tiny$\bullet$}}
\begin{itemize}
    \item $\mathbb{R}$ denotes the set of real numbers
    \item $\mathbb{R}^+$ denotes the set of nonnegative real numbers
     \item $\lVert \cdot \rVert$ denotes the Euclidean norm  and $\langle \cdot,\cdot \rangle$ denotes the standard inner product on $\mathbb{R}^{n}$
    \item $I(\mathbb{R})$ represents the set of all closed and bounded intervals
    \item  Bold capital letters refer to the elements of $I(\mathbb{R})$
    \item $\overline{I({\mathbb{R}})}=I(\mathbb{R})\cup\{-\infty,+\infty\}$
    \item $\mathbb{B}=\{x\in{\mathbb{R}}^n:\lVert x \rVert\leq 1\}$ denotes the closed unit ball in ${\mathbb{R}}^n$.
    \end{itemize}

\subsection{Fundamental Operations on Intervals}

Arithmetic operations of two intervals ${\textbf A} = [\underline{a}, \overline{a}]$ and $\textbf{B} = \left[\underline{b}, \overline{b}\right]$ are defined by
$${\textbf A} \oplus {\textbf B} =\left[~\underline{a} + \underline{b},~ \overline{a} + \overline{b}~\right],~{\textbf A} \ominus {\textbf B} =  \left[~\underline{a} - \overline{b},~ \overline{a} - \underline{b}~\right],$$
and
	$$\lambda \odot  {\textbf A} =  {\textbf A} \odot  \lambda = 
	\begin{cases}
	[\lambda \underline{a},~\lambda \overline{a}], & \text{if $\lambda \geq 0$}\\
	[\lambda \overline{a},~\lambda \underline{a}], & \text{if $\lambda < 0,$}
	\end{cases}$$
where $\lambda$ is a real constant.\\
The norm \cite{moore1966interval} of an interval ${\textbf A} = [\underline{a}, \overline{a}]$ in $I(\mathbb{R})$ is defined by 
${\lVert\textbf{A}\rVert}_{I(\mathbb{R})}=\max\{\lvert\underline{a}\rvert, \lvert\overline{a}\rvert\}.$\\
The norm of an interval vector $\widehat{\textbf{A}}=(\textbf{A}_1,\textbf{A}_2,\ldots,\textbf{A}_n)\in I(\mathbb{R})^n$ is given by (see \cite{moore1966interval})
\[\lVert \widehat{\textbf{A}}\rVert_{I(\mathbb{R})^n}=\sum_{i=1}^{n}\lVert{\textbf{A}_{i}}\rVert_{I(\mathbb{R})}.\]
 It is to note that a real number $p$ can be represented by the interval $[p,p].$

	\begin{definition}
	(\emph{gH-difference of intervals} \cite{stefanini2009generalized}). Let $\textbf{A}$ and $\textbf{B}$ be two elements in $I(\mathbb{R})$. The $gH$-difference
		between $\textbf{A}$ and $\textbf{B}$, denoted by $\textbf{A} \ominus_{gH} \textbf{B}$, is defined by the interval $\textbf{C}$ such that
		\[
		\textbf{A} =  \textbf{B} \oplus  \textbf{C} ~\text{ or }~ \textbf{B} = \textbf{A}
		\ominus \textbf{C}.
		\]
		It is to be noted that for $\textbf{A} = \left[\underline{a},~\overline{a}\right]$ and $\textbf{B} = \left[\underline{b},~\overline{b}\right]$,
		\[
		\textbf{A} \ominus_{gH} \textbf{B} = \left[\min\{\underline{a}-\underline{b},
		\overline{a} - \overline{b}\},~ \max\{\underline{a}-\underline{b}, \overline{a} -
		\overline{b}\}\right],
		\]
		and $\textbf{A} \ominus_{gH} \textbf{A} = \textbf{0}.$

	\end{definition}
	\begin{definition}\label{dd8}
	(\emph{Algebraic operations on $I(\mathbb{R})^{n}$}). Let $\widehat{\textbf{A}}=(\textbf{A}_{1},\textbf{A}_{2},\ldots,\textbf{A}_{n})$ and $\widehat{\textbf{B}}=(\textbf{B}_{1},\textbf{B}_{2}, \ldots,\textbf{B}_{n})$ be two elements in $I(\mathbb{R})^{n}$. An algebraic operation $\star$ between $\widehat{\textbf{A}}$ and $\widehat{\textbf{B}}$, denoted by $\widehat{\textbf{A}}\star\widehat{\textbf{B}}$, is defined by 
	\[
     \widehat{\textbf{A}}\star\widehat{\textbf{B}}=(\textbf{A}_{1}\star\textbf{B}_{1},\textbf{A}_{2}\star\textbf{B}_{2},\ldots,\textbf{A}_{n}\star\textbf{B}_{n}),
    \]
where $\star\in\{\oplus,~\ominus,~\ominus_{gH}\}$.
	\end{definition}

	\begin{definition}(\emph{Special product}).  
	For any $x=(x_{1},x_{2},\ldots,x_{n})\in\mathbb{R}^{n}$ and a vector of intervals  $\widehat{\textbf{A}}=(\textbf{A}_{1},\textbf{A}_{2},\ldots,\textbf{A}_{n}) \in I(\mathbb{R})^{n}$ with $\textbf{A}_{i}=[\underline{a}_{i},\overline{a}_{i}]$ for each $i=1,2,\ldots, n$, the special product between $x$ and $\widehat{\textbf{A}}$, denoted by $	{x}^{\top}\odot\widehat{\textbf{A}}$, is given by
	\[
	{x}^{\top}\odot\widehat{\textbf{A}}=\left[\min\left\{\sum\limits_{i=1}^{n} x_{i}\underline{a}_{i}, \sum\limits_{i=1}^{n} x_{i} \overline{a}_{i}\right\}, \max\left\{\sum\limits_{i=1}^{n} x_{i}\underline{a}_{i}, \sum\limits_{i=1}^{n} x_{i} \overline{a}_{i}\right\}\right].
	\]
\end{definition}

\begin{remark}\label{n1}
It is to notice that if all the components of $\widehat{\textbf{A}}$ are  degenerate intervals, i.e., $\widehat{\textbf{A}}\in\mathbb{R}^{n}$, then the special product  ${x}^{\top}\odot\widehat{\textbf{A}}$ reduces to the standard inner product of ${x}\in\mathbb{R}^{n}$ with $\widehat{\textbf{A}}$.
\end{remark}

	\begin{definition}\label{g99}(\emph{Dominance of intervals} \cite{wu2008wolfe}). Let $\textbf{A}=[\underline{a},\overline{a}]$ and $\textbf{B}=[\underline{b},\overline{b}]$ be two elements in $I(\mathbb{R})$. 
		\begin{enumerate}[(i)]
			\item $\textbf{B}$ is said to be dominated by $\textbf{A}$ if $\underline{a}\leq\underline{b}$ and $\overline{a}\leq\overline{b}$, and then we write $\textbf{A}\preceq \textbf{B}$;
			\item $\textbf{B}$ is said to be strictly dominated by $\textbf{A}$ if $\textbf{A}\preceq \textbf{B}$ and $\textbf{A}\neq\textbf{B}$, and then we write $\textbf{A}\prec\textbf{B}.$ Equivalently, $\textbf{A}\prec \textbf{B}$ if and only if any of the following holds:\\
			 `$\underline{a}<\underline{b}$ and $\overline{a}\leq \overline{b}$' \quad or \quad  
			 `$\underline{a}\leq \underline{b}$ and $\overline{a}<\overline{b}$' \quad or \quad  
			 `$\underline{a}<\underline{b}$ and $\overline{a}< \overline{b}$';
		
			\item if neither $\textbf{A}\preceq \textbf{B}$ nor $\textbf{B}\preceq \textbf{A}$, we say that none of $\textbf{A}$ and $\textbf{B}$ dominates the other, or $\textbf{A}$ and $\textbf{B}$ are not comparable. Equivalently, $\textbf{A}$ and $\textbf{B}$ are not comparable if either `$\underline{a}<\underline{b}$ and $\overline{a}>\overline{b}$' or `$\underline{a}>\underline{b}$ and $\overline{a}<\overline{b}$'.
		\end{enumerate}
	\end{definition}
	
\subsection{Calculus of IVFs}
Throughout this subsection, $\textbf{F}$ is an IVF defined on a nonempty subset $X$ of ${\mathbb{R}}^n.$ 
	\begin{definition}(\emph{${gH}$-continuity} \cite{ghosh2017newton}).
		Let $\textbf{F}$ be an IVF and let $\bar{x}$ be a point of ${X}$ and $h\in\mathbb{R}^{n}$ such that $\bar x+h\in X$. The function
		$\textbf{F}$ is said to be $gH$-continuous at $\bar{x}$ if
		\[
		\lim_{\lVert h \rVert\rightarrow 0}\left(\textbf{F}(\bar{x}+h)\ominus_{gH}\textbf{F}(\bar{x})\right)=\textbf{0}.
		\]
	\end{definition}

\begin{definition}(\emph{$gH$-derivative} \cite{stefanini2009generalized}). The $gH$-derivative of an IVF $\textbf{F}: \mathbb{R}\rightarrow I(\mathbb{R})$ at $\bar{x}\in \mathbb{R}$ is defined by
	\[
	\textbf{{F}}\unboldmath{'}(\bar{x})=\lim\limits_{h\to0}\frac{\textbf{F}(\bar{x}+h)\ominus_{gH}\textbf{F}(\bar{x})}{d},  \text{ provided the limit exists}.
	\]
	\end{definition}
	\begin{remark}\label{rm2}
	({See} \cite{stefanini2009generalized}). Let $\textbf{F}=[\underline{F},\overline{F}]$ be an IVF on $X$, where $\underline{F}$ and $\overline{F}$ are real-valued functions defined on $X$. Then, the $gH$-derivative of $\textbf{F}$ at $\bar{x}\in X$ exists if the derivatives of $\underline{F}$ and $\overline{F}$ at $\bar{x}$ exist and 
	\[
	\textbf{F}'(\bar{x})=\left[\min\left\{\underline{F}'(\bar{x}),\overline{F}'(\bar{x})\right\},
\max\left\{\underline{F}'(\bar{x}),\overline{F}'(\bar{x})\right\}\right]	
.\]
\end{remark}

	\begin{definition}
	(\emph{$gH$-partial derivative} \cite{ghosh2017newton}). Let $\bar{x}=(\bar{x}_{1},\bar{x}_{2},\ldots,\bar{x}_{n})^\top$ be  a point of $X$. For a given $i \in \{1,2,\ldots, n\}$, we define a function $\textbf{G}_{i}$ by  $\textbf{G}_{i}(x_{i}) =  \textbf{F}(\bar{x}_{1},\bar{x}_{2},\ldots,\bar{x}_{i-1}, x_{i},\bar{x}_{i+1},\ldots,\\\bar{x}_{n}).$  If $gH$-derivative of $\textbf{G}_{i}$ exists at $\bar{x}_{i}$, then we say that $\textbf{F}$ has the $i$th $gH$-partial derivative at $\bar{x}$.
     We denote the $i$th $gH$-partial derivative of $\textbf{F}$ at $\bar{x}$ by $D_{i}\textbf{F}(\bar{x})$, i.e.,   
     $
     D_{i}\textbf{F}(\bar{x})=\textbf{G}'_{i}(\bar x_{i})^\top.
     $
     \end{definition}
	\begin{definition}
	(\emph{gH-gradient} \cite{ghosh2017newton}). The $gH$-gradient of  $\textbf{F}$ at a point $\bar{x}\in X$, denoted by $\nabla\textbf{F}(\bar{x})\in I(\mathbb{R})^{n}$, is defined by 
	\[
	\nabla\textbf{F}(\bar{x})=(D_{1}\textbf{F}_{1}(\bar{x}),D_{2}\textbf{F}_{2}(\bar{x}),\ldots,D_{n}\textbf{F}_{n}(\bar{x}))^{\top}.
	\]
	\end{definition}
	\begin{lemma}\label{lm4}
	Let $\textbf{A}, \textbf{B}, \text{and}~ \textbf{C}$ are in $I(\mathbb{R})$. Then, for any real number $r$,
	\begin{enumerate}[\rm(i)]
	    \item \label{lm41} 
	    $[r,r]\preceq\textbf{A}$ and $\textbf{A}\preceq\textbf{B}\ominus_{gH}\textbf{C}$ $\implies$ $\textbf{C}\oplus[r,r]\preceq\textbf{B}$ and 
	   
	    \item \label{lm43}  $\left((1-\lambda)\odot\textbf{A}\oplus\lambda\odot \textbf{B}\right)\ominus_{gH}\textbf{A}=\lambda\odot\left(\textbf{B}\ominus_{gH}\textbf{A}\right)$ for any $\lambda\in[0,1].$
	\end{enumerate}
	\end{lemma}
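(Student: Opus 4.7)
The plan is to reduce both parts to elementary endpoint computations using the definitions of $\oplus$, $\ominus_{gH}$, and $\odot$ given in the preliminaries. Throughout, I would write $\textbf{A}=[\underline{a},\overline{a}]$, $\textbf{B}=[\underline{b},\overline{b}]$, and (for part (i)) $\textbf{C}=[\underline{c},\overline{c}]$.

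For part (i), the first step is to unpack the two hypotheses at the endpoint level. The hypothesis $[r,r]\preceq\textbf{A}$ gives $r\leq\underline{a}$ and $r\leq\overline{a}$. The hypothesis $\textbf{A}\preceq \textbf{B}\ominus_{gH}\textbf{C}$, combined with the explicit form $\textbf{B}\ominus_{gH}\textbf{C}=[\min\{\underline{b}-\underline{c},\overline{b}-\overline{c}\},\max\{\underline{b}-\underline{c},\overline{b}-\overline{c}\}]$, yields
\[
\underline{a}\leq\min\{\underline{b}-\underline{c},\,\overline{b}-\overline{c}\},
\]
which splits into the two inequalities $\underline{a}\leq\underline{b}-\underline{c}$ and $\underline{a}\leq\overline{b}-\overline{c}$. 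Chaining with $r\leq\underline{a}$ gives $r\leq\underline{b}-\underline{c}$ and $r\leq\overline{b}-\overline{c}$, i.e., $\underline{c}+r\leq\underline{b}$ and $\overline{c}+r\leq\overline{b}$. Since $\textbf{C}\oplus[r,r]=[\underline{c}+r,\overline{c}+r]$, this is exactly the conclusion $\textbf{C}\oplus[r,r]\preceq\textbf{B}$. Note that the upper-bound inequality $\overline{a}\leq\max\{\underline{b}-\underline{c},\overline{b}-\overline{c}\}$ is not needed; this small asymmetry is the only mild subtlety.

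For part (ii), I would exploit that $\lambda\in[0,1]$ forces both $\lambda\geq 0$ and $1-\lambda\geq 0$, so the scalar multiplications expand without swapping endpoints:
\[
(1-\lambda)\odot\textbf{A}\oplus\lambda\odot\textbf{B}=\bigl[(1-\lambda)\underline{a}+\lambda\underline{b},\,(1-\lambda)\overline{a}+\lambda\overline{b}\bigr].
\]
Applying the definition of $\ominus_{gH}$ and simplifying the two endpoint differences gives $\lambda(\underline{b}-\underline{a})$ and $\lambda(\overline{b}-\overline{a})$, so
\[
\bigl((1-\lambda)\odot\textbf{A}\oplus\lambda\odot\textbf{B}\bigr)\ominus_{gH}\textbf{A}=\bigl[\min\{\lambda(\underline{b}-\underline{a}),\lambda(\overline{b}-\overline{a})\},\,\max\{\lambda(\underline{b}-\underline{a}),\lambda(\overline{b}-\overline{a})\}\bigr].
\]
Because $\lambda\geq 0$ one may pull $\lambda$ outside the $\min$ and $\max$, and what remains is precisely $\lambda\odot(\textbf{B}\ominus_{gH}\textbf{A})$ by the definitions of $\ominus_{gH}$ and $\odot$, establishing the identity.

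Neither part presents a genuine obstacle; both are endpoint calculations. The only place where care is warranted is in part (ii), where the nonnegativity of $\lambda$ (guaranteed by $\lambda\in[0,1]$) is essential for commuting scalar multiplication with $\min$ and $\max$, and the nonnegativity of $1-\lambda$ is essential for the convex-combination expansion. If one weakens $\lambda\in[0,1]$ to an arbitrary real, the identity would fail in general, which is why the hypothesis is stated in this form.
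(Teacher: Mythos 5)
Your proof is correct and follows essentially the same route as the paper's: both parts are handled by unpacking the $gH$-difference and scalar multiplication at the endpoint level, and your part (ii) matches the paper's computation line for line. The only difference is in part (i), where the paper runs a two-case analysis on which of $\underline{b}-\underline{c}$ and $\overline{b}-\overline{c}$ attains the minimum and maximum, whereas you correctly observe that the single inequality $\underline{a}\leq\min\{\underline{b}-\underline{c},\overline{b}-\overline{c}\}$ already yields both required bounds once chained with $r\leq\underline{a}$, which eliminates the case split and is a mild streamlining rather than a different argument.
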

	\begin{proof}
	        See Appendix \ref{aa1}.
	\end{proof}

	\begin{definition}(\emph{Convex IVF} \cite{wu2007karush}). Let $X$ be a nonempty convex subset of $\mathbb{R}^n$. An IVF 
		$\textbf{F}: {X} \rightarrow I(\mathbb{R})$ is said to be convex on ${X}$ if for any $x_1$ and $x_2$ in ${X}$,
		\[
		\textbf{F}(\lambda_1 x_1+\lambda_2 x_2)\preceq
		\lambda_1\odot\textbf{F}(x_1)\oplus\lambda_2\odot\textbf{F}(x_2)~\text{for all}~ \lambda_1,\lambda_2\in[0,1]~\text{with} ~\lambda_1+\lambda_2=1.
		\]
	
			\end{definition}
\begin{lemma}\label{lm2}
\emph{(See \cite{wu2007karush})}. Let $X$ be a nonempty convex subset of $\mathbb{R}^{n}$, and  $\textbf{F}=[\underline{F},\overline{F}]$ be an IVF on $X$, where $\underline{F}$ and $\overline{F}$ are real-valued functions defined on $X$. Then, $\textbf{F}$ is convex on $X$ if and only if $\underline{F}$ and $\overline{F}$ are convex on $X$.
\end{lemma}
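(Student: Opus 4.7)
The proof will be a direct unpacking of definitions, splitting the single interval inequality defining convexity of $\textbf{F}$ into two scalar inequalities that are exactly the convexity conditions on the endpoint functions.

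The plan is to fix arbitrary $x_1,x_2\in X$ and $\lambda_1,\lambda_2\in[0,1]$ with $\lambda_1+\lambda_2=1$, and examine both sides of
\[
\textbf{F}(\lambda_1 x_1+\lambda_2 x_2)\preceq \lambda_1\odot\textbf{F}(x_1)\oplus\lambda_2\odot\textbf{F}(x_2)
\]
componentwise. The left-hand side is simply the interval $[\underline{F}(\lambda_1 x_1+\lambda_2 x_2),\overline{F}(\lambda_1 x_1+\lambda_2 x_2)]$. For the right-hand side, I would invoke the defining formula for the scalar product $\lambda\odot\textbf{A}$ in the case $\lambda\geq 0$: since both $\lambda_1$ and $\lambda_2$ are nonnegative, one has $\lambda_i\odot\textbf{F}(x_i)=[\lambda_i\underline{F}(x_i),\lambda_i\overline{F}(x_i)]$. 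Applying the definition of $\oplus$ then yields
\[
\lambda_1\odot\textbf{F}(x_1)\oplus\lambda_2\odot\textbf{F}(x_2)=\bigl[\lambda_1\underline{F}(x_1)+\lambda_2\underline{F}(x_2),\ \lambda_1\overline{F}(x_1)+\lambda_2\overline{F}(x_2)\bigr].
\]

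Next I would invoke Definition \ref{g99}(i), which says that $\textbf{A}\preceq\textbf{B}$ is equivalent to the componentwise inequalities $\underline{a}\leq\underline{b}$ and $\overline{a}\leq\overline{b}$. Applying this to the above intervals, the interval convexity inequality for $\textbf{F}$ is equivalent to the simultaneous pair
\[
\underline{F}(\lambda_1 x_1+\lambda_2 x_2)\leq\lambda_1\underline{F}(x_1)+\lambda_2\underline{F}(x_2)\ \text{and}\ \overline{F}(\lambda_1 x_1+\lambda_2 x_2)\leq\lambda_1\overline{F}(x_1)+\lambda_2\overline{F}(x_2),
\]
which are precisely the standard scalar convexity inequalities for $\underline{F}$ and $\overline{F}$. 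From this equivalence, both directions of the lemma follow at once: convexity of $\textbf{F}$ implies each of these scalar inequalities for all admissible $x_1,x_2,\lambda_1,\lambda_2$, hence $\underline{F}$ and $\overline{F}$ are convex; conversely, if $\underline{F}$ and $\overline{F}$ are convex, then both scalar inequalities hold, which recombines to the interval inequality defining convexity of $\textbf{F}$.

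There is no real obstacle here — the whole result is essentially a bookkeeping exercise. The only point requiring mild care is to remember that the formula for $\lambda\odot\textbf{A}$ bifurcates depending on the sign of $\lambda$, so one must note that $\lambda_1,\lambda_2\in[0,1]$ rules out the sign-flipping case and lets the endpoints of the weighted sum be read off directly. Once this is observed, the equivalence reduces to the componentwise description of $\preceq$.
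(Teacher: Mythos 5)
Your proof is correct: the paper itself omits the argument and simply cites Wu (2007), and your componentwise unpacking of $\preceq$, $\oplus$, and the nonnegative-scalar case of $\odot$ is exactly the standard argument that reference supplies. The one point of care you flag — that $\lambda_1,\lambda_2\ge 0$ keeps the endpoints of $\lambda_i\odot\textbf{F}(x_i)$ in the order $[\lambda_i\underline{F}(x_i),\lambda_i\overline{F}(x_i)]$ — is indeed the only place the argument could go wrong, and you handle it.
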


		\begin{definition} \label{d1}
		(\emph{$gH$-directional derivative} \cite{ghosh2020generalized}). Let $\textbf{F}$ be an IVF on $X$. Let $\bar x\in {X}$ and $d\in\mathbb{R}^n$. If the limit
	\[\lim_{\lambda\to 0+}\frac{1}{\lambda}\odot\left(\textbf{F}(\bar{x}+\lambda d)\ominus_{gH}\textbf{F}(\bar{x})\right)
	\]
exists, then the limit is said to be $gH$-directional derivative of $\textbf{F}$ at $\bar{x}$ in the direction $d$, and it is denoted by $\textbf{F}_\mathscr{D}(\bar{x})(d)$.
		\end{definition}

        \begin{definition}
		(\emph{gH-differentiability}\label{dd11} \cite{ghosh2017newton}). An IVF $\textbf{F}$ is said to be $gH$-differentiable at $\bar{x}\in X$ if there exist two  IVFs $\textbf{E}(\textbf{F}(\bar{x});h)$ and $\textbf{L}_{\bar{x}}:\mathbb{R}^{n}\rightarrow I(\mathbb{R})$ such that 
		\[
		\textbf{F}(\bar{x}+h)\ominus_{gH} \textbf{F}(\bar{x})=\textbf{L}_{\bar{x}}(h)\oplus\lVert h\rVert\odot\textbf{E}(\textbf{F}(\bar{x});h)
		\]
for $\lVert h \rVert<\delta$ for some $\delta>0$, where $\lim\limits_{\lVert h\rVert\to0}$ $\textbf{E}(\textbf{F}(\bar{x});h)=\textbf{0}$ and $\textbf{L}_{\bar{x}}$ is such a function that satisfies 
		\begin{enumerate}[(i)]
		    \item $\textbf{L}_{\bar{x}}(x+y)=\textbf{L}_{\bar{x}}(x)\oplus\textbf{L}_{\bar{x}}(y)$ for all $x, y\in X$, and 
		    \item $\textbf{L}_{\bar{x}}(cx)=c\odot\textbf{L}_{\bar{x}}(x)$ for all $c\in\mathbb{R}$ and $x\in X.$
		\end{enumerate}
		\end{definition}
		\begin{theorem}\label{th3}\emph{(See \cite{ghosh2017newton})}.
		  Let $\textbf{F}: X\rightarrow I (\mathbb{R})$ be $gH$-differential at $\bar{x}$. Then, $\textbf{L}_{\bar{x}}$ exists for every $h\in\mathbb{R}^{n}$ and 
		  \[
		  \textbf{L}_{\bar{x}}=\sum\limits_{i=1}^{n} h_{i}\odot D_{i}\textbf{F}(\bar{x}),
		  \]
		  where $\sum\limits_{i=1}^{n} h_{i}\odot D_{i}\textbf{F}(\bar{x})=h_{1}\odot D_{1}\textbf{F}(\bar{x})\oplus h_{2}\odot D_{2}\textbf{F}(\bar{x})\oplus\cdots\oplus       h_{n}\odot D_{n}\textbf{F}(\bar{x}).$
		 \end{theorem}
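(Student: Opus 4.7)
The plan is to leverage the additivity and homogeneity properties built into Definition \ref{dd11} and then recover each component of $\textbf{L}_{\bar x}$ by specializing the direction $h$ to a coordinate axis. Write $h=\sum_{i=1}^n h_i e_i$ with $\{e_1,\ldots,e_n\}$ the standard basis of $\mathbb{R}^n$. Applying property (i) of Definition \ref{dd11} inductively gives $\textbf{L}_{\bar x}(h)=\textbf{L}_{\bar x}(h_1 e_1)\oplus \textbf{L}_{\bar x}(h_2 e_2)\oplus\cdots\oplus \textbf{L}_{\bar x}(h_n e_n)$, and then property (ii) yields $\textbf{L}_{\bar x}(h_i e_i)=h_i\odot \textbf{L}_{\bar x}(e_i)$. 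Thus the entire statement reduces to establishing the coordinate identification $\textbf{L}_{\bar x}(e_i)=D_i \textbf{F}(\bar x)$ for every $i\in\{1,2,\ldots,n\}$.

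To obtain this identification, specialize the defining relation of $gH$-differentiability to the direction $h=\lambda e_i$ with $\lambda\in \mathbb{R}$ and $|\lambda|$ small. Using property (ii) on the $\textbf{L}_{\bar x}$ term, the relation becomes
\[
\textbf{F}(\bar x+\lambda e_i)\ominus_{gH}\textbf{F}(\bar x)=\lambda \odot \textbf{L}_{\bar x}(e_i)\oplus |\lambda|\odot \textbf{E}(\textbf{F}(\bar x);\lambda e_i).
\]
Since the slice function $\textbf{G}_i(x_i)=\textbf{F}(\bar x_1,\ldots,\bar x_{i-1},x_i,\bar x_{i+1},\ldots,\bar x_n)$ satisfies $\textbf{G}_i(\bar x_i+\lambda)=\textbf{F}(\bar x+\lambda e_i)$, dividing the above display by $\lambda$ and letting $\lambda\to 0$, the error term vanishes because $\textbf{E}(\textbf{F}(\bar x);\lambda e_i)\to\textbf{0}$ as $\lVert \lambda e_i\rVert\to 0$. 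The resulting limit is precisely $\textbf{G}_i'(\bar x_i)=D_i \textbf{F}(\bar x)$, and it equals $\textbf{L}_{\bar x}(e_i)$. Combining this with the first paragraph produces the desired formula $\textbf{L}_{\bar x}(h)=\sum_{i=1}^n h_i\odot D_i \textbf{F}(\bar x)$.

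The main obstacle is handling the asymmetry of $\odot$ for negative scalars and the min/max definition of $\ominus_{gH}$ cleanly through the limit. In particular, the remainder $|\lambda|\odot \textbf{E}(\textbf{F}(\bar x);\lambda e_i)$ must be shown to be negligible after dividing by $\lambda$ regardless of the sign of $\lambda$; this follows because $\tfrac{|\lambda|}{\lambda}\in\{-1,+1\}$ only flips the endpoints of the remainder interval, whose norm already tends to zero. A small technical point also needs attention: property (ii) of $\textbf{L}_{\bar x}$ is assumed for every $c\in \mathbb{R}$, so the linear-combination manipulation in the first paragraph is valid without any sign restriction on the $h_i$. With these remarks, the argument goes through as sketched.
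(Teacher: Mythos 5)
The paper does not prove this theorem --- it is quoted directly from \cite{ghosh2017newton} --- so there is no internal proof to compare against. Your argument is correct and is the standard one: additivity and homogeneity of $\textbf{L}_{\bar{x}}$ reduce the claim to the coordinate identification $\textbf{L}_{\bar{x}}(e_i)=D_i\textbf{F}(\bar{x})$, which you obtain by restricting the defining expansion to $h=\lambda e_i$, dividing by $\lambda$, and letting $\lambda\to 0$; the only delicate point is the factor $|\lambda|/\lambda=\pm 1$ on the remainder, and you dispose of it correctly since it does not change the norm of the error interval.
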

		\begin{remark}\label{n3}{(See \cite{ghosh2017newton})}.    Let $\textbf{F}:X\rightarrow I(\mathbb{R})$ be  $gH$-differentiable at $\bar{x}\in X$. Then, there exists a nonzero $\lambda$ and $\delta>0$ such that 
		\[\lim_{\lambda\to 0}\frac{1}{\lambda}\odot\left(\textbf{F}(\bar{x}+\lambda h)\ominus_{gH}\textbf{F}(\bar{x})\right)=\textbf{L}_{\bar{x}}(h)
	    \]
		for all $h\in\mathbb{R}^{n}$ with $\rvert\lambda\lvert\lVert h \rVert<\delta$, where $\textbf{L}_{\bar{x}}$ is an IVF, defined in Definition \ref{dd11} of $gH$-differentiability.
		\end{remark}
		\begin{lemma}\label{lm8}
		Let $\textbf{F} : X\rightarrow I(\mathbb R)$ be a $gH$-differentiable at $\bar{x}\in X$. Then, $\textbf{F}$ has $gH$-directional derivative at $\bar{x}$ for every direction $h\in\mathbb{R}^{n}$ and 
		\[
		\textbf{F}_{\mathscr{D}}(\bar{x})(h)=\textbf{L}_{\bar{x}}(h) \text{ for all } h\in\mathbb{R}^{n},
		\]
		where $\textbf{L}_{\bar{x}}$ is as defined in Definition \ref{dd11}.
		\end{lemma}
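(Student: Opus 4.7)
My plan is to substitute $h = \lambda d$ into the defining identity for $gH$-differentiability and then divide through by $\lambda$. Concretely, since $\textbf{F}$ is $gH$-differentiable at $\bar{x}$, Definition \ref{dd11} supplies an IVF $\textbf{E}(\textbf{F}(\bar{x});\cdot)$ with $\textbf{E}(\textbf{F}(\bar{x});h)\to\textbf{0}$ as $\|h\|\to 0$ and a linear IVF $\textbf{L}_{\bar{x}}$ such that
\[
\textbf{F}(\bar{x}+h)\ominus_{gH}\textbf{F}(\bar{x})=\textbf{L}_{\bar{x}}(h)\oplus\|h\|\odot\textbf{E}(\textbf{F}(\bar{x});h)
\]
whenever $\|h\|<\delta$. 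Fix $d\in\mathbb{R}^n$ and choose any $\lambda>0$ small enough that $\|\lambda d\|<\delta$; applying the identity to $h=\lambda d$ gives a usable expression for $\textbf{F}(\bar{x}+\lambda d)\ominus_{gH}\textbf{F}(\bar{x})$.

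Next, I would invoke the positive homogeneity of $\textbf{L}_{\bar{x}}$ (clause (ii) of Definition \ref{dd11}) to write $\textbf{L}_{\bar{x}}(\lambda d)=\lambda\odot\textbf{L}_{\bar{x}}(d)$, and note that $\|\lambda d\|=\lambda\|d\|$ for $\lambda>0$. Multiplying both sides of the displayed identity by $\tfrac{1}{\lambda}$ and using the distributivity of scalar multiplication over $\oplus$ (which holds for any real scalar by the interval arithmetic from Section \ref{Sec2}) yields
\[
\tfrac{1}{\lambda}\odot\!\left(\textbf{F}(\bar{x}+\lambda d)\ominus_{gH}\textbf{F}(\bar{x})\right)=\textbf{L}_{\bar{x}}(d)\oplus\|d\|\odot\textbf{E}(\textbf{F}(\bar{x});\lambda d).
\]

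Finally, I would pass to the limit $\lambda\to 0+$. Since $\|\lambda d\|\to 0$, the remainder IVF satisfies $\textbf{E}(\textbf{F}(\bar{x});\lambda d)\to\textbf{0}$, and hence $\|d\|\odot\textbf{E}(\textbf{F}(\bar{x});\lambda d)\to\textbf{0}$ in the interval norm. Therefore the right-hand side tends to $\textbf{L}_{\bar{x}}(d)$, which by Definition \ref{d1} is exactly what is needed to conclude that the $gH$-directional derivative $\textbf{F}_{\mathscr{D}}(\bar{x})(d)$ exists and equals $\textbf{L}_{\bar{x}}(d)$.

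The main technical point to be careful about is the bookkeeping between $\lambda>0$ in Definition \ref{d1} and the one-sided scalar factor. Because $\lambda>0$, we have $\|\lambda d\|/\lambda=\|d\|$ without an absolute value, and the identity $\tfrac{1}{\lambda}\odot(\lambda\odot\textbf{L}_{\bar{x}}(d))=\textbf{L}_{\bar{x}}(d)$ is unambiguous; neither step would go through as cleanly for $\lambda<0$. No deep estimate is needed — once the differentiability identity is expanded and the positive homogeneity of $\textbf{L}_{\bar{x}}$ is applied, the result reduces to the vanishing of the remainder $\textbf{E}(\textbf{F}(\bar{x});\lambda d)$, which is guaranteed by Definition \ref{dd11}.
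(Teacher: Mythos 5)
Your proposal is correct and follows essentially the same route as the paper: the paper simply invokes Remark \ref{n3} (the two-sided limit $\lim_{\lambda\to 0}\tfrac{1}{\lambda}\odot(\textbf{F}(\bar{x}+\lambda h)\ominus_{gH}\textbf{F}(\bar{x}))=\textbf{L}_{\bar{x}}(h)$) and restricts it to $\lambda\to 0+$, whereas you re-derive that limit directly from Definition \ref{dd11} via positive homogeneity of $\textbf{L}_{\bar{x}}$ and the vanishing of the remainder term. Your version is a more self-contained unpacking of the same argument, not a different one.
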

		\begin{proof}
		Since \textbf{F} is $gH$-differentiable at $\bar{x}$, by Remark \ref{n3}, we have
		\begin{eqnarray*}
		&&\lim_{\lambda\to 0}\frac{1}{\lambda}\odot\left(\textbf{F}(\bar{x}+\lambda h)\ominus_{gH}\textbf{F}(\bar{x})\right)=\textbf{L}_{\bar{x}}(h) \text{ for all } h\in\mathbb{R}^{n}\\
		&\implies&\lim_{\lambda\to 0+}\frac{1}{\lambda}\odot\left(\textbf{F}(\bar{x}+\lambda h)\ominus_{gH}\textbf{F}(\bar{x})\right)=\textbf{L}_{\bar{x}}(h) \text{ for all } h\in\mathbb{R}^{n}.
		\end{eqnarray*}
		Hence, by Definition \ref{d1}, we conclude  that \textbf{F} has $gH$-directional derivative at $\bar{x}$ and 
		\[
		\textbf{F}_{\mathscr{D}}(\bar{x})(h)=\textbf{L}_{\bar{x}}(h) \text{ for all  } h\in\mathbb{R}^{n}.
		\]
\end{proof}

		\begin{definition}\label{dd7}(\emph{Proper IVF}). An extended IVF $\textbf{F}: X\rightarrow \overline{I(\mathbb{R})}$ is called a proper IVF if there exists $\bar x\in{X}$ such that $\textbf{F}(\bar x)\prec [+\infty,+\infty]$ and $[-\infty,-\infty]\prec\textbf{F}(x)~\text{for all}~ x\in{X}.$ 
		\end{definition}
	\begin{definition}
	(\emph{Effective domain of IVF}). The effective domain of an extended IVF $\textbf{F}: X\rightarrow \overline{I(\mathbb{R})}$ is the collection of all such points at which \textbf{F} is finite. It is denoted by $\dom(\textbf{F})$, i.e., 
	\[
	 \dom(\textbf{F})=\Big\{x\in X :\textbf{F}(x)\prec[+\infty,+\infty]\Big\}.
	\]
	\end{definition}

	\begin{theorem}\label{thm4}
		Let $X$ be a nonempty convex subset of $\mathbb{R}^{n}$ and $\textbf{F}: {X} \rightarrow I(\mathbb{R})$  be a convex IVF with $\textbf{F}(x)=[\underline{F}(x),\overline{F}(x)]$, where $\underline{F}$ and $\overline{F}$ are real-valued functions defined on $X$. Then, at any $\bar{x} \in X$, $gH$-directional derivative $\textbf{F}_\mathscr{D}(\bar{x})(d)$ exists  and 
		\[
		\textbf{F}_\mathscr{D}(\bar{x})(d)=\Big[\min\Big\{\underline{F}_{\mathscr{D}}(\bar{x})(d),\overline{F}_{\mathscr{D}}(\bar{x})(d)\Big\}, \max\Big\{\underline{F}_{\mathscr{D}}(\bar{x})(d),\overline{F}_{\mathscr{D}}(\bar{x})(d)\Big\}\Big].
		\]
	
	\end{theorem}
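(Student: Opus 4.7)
The plan is to reduce the interval-valued statement to two scalar statements via the endpoint representation $\textbf{F}=[\underline F,\overline F]$, invoke the classical convex-analytic fact that directional derivatives of real-valued convex functions exist, and then push the min/max through the limit using continuity.

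First I would apply Lemma \ref{lm2}: since $\textbf{F}$ is convex on $X$, both $\underline F$ and $\overline F$ are convex real-valued functions on $X$. A standard fact in convex analysis is that for a convex function $\varphi$ on a convex set, the difference quotient $\lambda\mapsto\tfrac{1}{\lambda}(\varphi(\bar x+\lambda d)-\varphi(\bar x))$ is nondecreasing in $\lambda>0$ (whenever $\bar x+\lambda d$ lies in the domain), hence its limit as $\lambda\to 0^+$ exists in $\mathbb{R}\cup\{-\infty\}$. Consequently both $\underline F_{\mathscr{D}}(\bar x)(d)$ and $\overline F_{\mathscr{D}}(\bar x)(d)$ exist.

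Next I would compute the difference quotient explicitly. By the closed form of $\ominus_{gH}$ recalled in Section \ref{Sec2},
\[
\textbf{F}(\bar x+\lambda d)\ominus_{gH}\textbf{F}(\bar x)=\bigl[\min\{a_\lambda,b_\lambda\},\,\max\{a_\lambda,b_\lambda\}\bigr],
\]
where $a_\lambda:=\underline F(\bar x+\lambda d)-\underline F(\bar x)$ and $b_\lambda:=\overline F(\bar x+\lambda d)-\overline F(\bar x)$. Since $\lambda>0$, the definition of the scalar product $\odot$ gives
\[
\tfrac{1}{\lambda}\odot\bigl(\textbf{F}(\bar x+\lambda d)\ominus_{gH}\textbf{F}(\bar x)\bigr)=\Bigl[\min\bigl\{\tfrac{a_\lambda}{\lambda},\tfrac{b_\lambda}{\lambda}\bigr\},\,\max\bigl\{\tfrac{a_\lambda}{\lambda},\tfrac{b_\lambda}{\lambda}\bigr\}\Bigr].
\]

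Finally, I would take $\lambda\to 0^+$. By the first step, $a_\lambda/\lambda\to\underline F_{\mathscr{D}}(\bar x)(d)$ and $b_\lambda/\lambda\to\overline F_{\mathscr{D}}(\bar x)(d)$, and by continuity of the min and max operators,
\[
\lim_{\lambda\to 0^+}\bigl[\min\{\tfrac{a_\lambda}{\lambda},\tfrac{b_\lambda}{\lambda}\},\max\{\tfrac{a_\lambda}{\lambda},\tfrac{b_\lambda}{\lambda}\}\bigr]=\bigl[\min\{\underline F_{\mathscr{D}}(\bar x)(d),\overline F_{\mathscr{D}}(\bar x)(d)\},\,\max\{\underline F_{\mathscr{D}}(\bar x)(d),\overline F_{\mathscr{D}}(\bar x)(d)\}\bigr],
\]
where convergence is in the norm of $I(\mathbb{R})$. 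This shows that $\textbf{F}_{\mathscr{D}}(\bar x)(d)$ exists and has the claimed form.

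The only delicate point is the existence of the two scalar directional derivatives: if $\bar x$ lies on the boundary of $X$ and $d$ does not point into $X$, the difference quotients may not be defined for small $\lambda>0$. In the interior of $X$ (the case of genuine interest for the later sections on subdifferentiability and WSM) monotonicity of the convex difference quotient gives existence immediately; at boundary points one assumes $d$ to be a feasible direction, i.e., $\bar x+\lambda d\in X$ for all sufficiently small $\lambda>0$, and the same monotonicity argument applies. The rest is bookkeeping with $\ominus_{gH}$ and $\odot$.
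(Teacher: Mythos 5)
Your argument is correct: decomposing $\textbf{F}$ into its endpoint functions via Lemma \ref{lm2}, using monotonicity of the convex difference quotient for existence of $\underline{F}_{\mathscr{D}}(\bar{x})(d)$ and $\overline{F}_{\mathscr{D}}(\bar{x})(d)$, and then passing the min/max through the limit after writing out $\ominus_{gH}$ and $\odot$ explicitly is precisely the standard route, and it is the one the paper implicitly relies on, since its ``proof'' consists only of a deferral to Theorem 3.1 of \cite{ghosh2020generalized}, which proceeds in the same way. Your closing caveat about boundary points and feasible directions (and the possibility that the scalar directional derivatives equal $-\infty$ there) is a genuine issue the paper's statement glosses over, but it does not affect the substance of the argument.
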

	\begin{proof}
	Similar to the proof of Theorem 3.1 in \cite{ghosh2020generalized}.
	\end{proof}
		\begin{definition}\label{dd12}
	(\emph{$gH$-Lipschitz continuous IVF} \cite{ghosh2020generalized}). An IVF $\textbf{F}$ is said to be $gH$-Lipschitz continuous on $X$ if there exists $M>0$ such that
	\[
	\lVert \textbf{F}(x) \ominus_{gH} \textbf{F}(y)\rVert_{I(\mathbb{R})} \leq M \lVert x-y \rVert \text{ for all } x,y\in X.
	\]
	The constant $M$ is called a Lipschitz constant.
	\end{definition}

\begin{definition}\label{12}(\emph{Supremum of a subset of $\overline{I(\mathbb{R})}$} \cite{gourav}). Let $\textbf{S}$ be a nonempty subset of ${{\overline{I(\mathbb{R})}}}$. An interval $\mathbf{\bar{A}}\in I(\mathbb{R})$ is said to be an upper bound of $\textbf{S}$ if $\textbf{B}\preceq \mathbf{\bar{A}}$ for all $\textbf{B}$ in $\textbf{S}$. An upper bound $\bar{\mathbf{A}} $ of $\textbf{S}$ is called a supremum of $\textbf{S}$, denoted by $\sup\textbf{S}$, if for all upper bounds $\textbf{C}$ of $\textbf{S}$ in $I(\mathbb{R}),~ \bar{\mathbf{A}}\preceq \textbf{C}$. Moreover, if the supremum of the set $\textbf{S}$ belongs to the set itself, then it is called maximum of $\textbf{S}$, denoted by $\max\textbf{S}$.
	\end{definition}
	\begin{remark}\label{nt5}{(See
	\cite{gourav})}. Let $\Lambda$ be an index set, and $\lambda\in\Lambda$.
	 For any subset $\textbf{S}=[a_\lambda,b_\lambda]$ of ${\overline{I(\mathbb{R})}}$,  we have  $\sup\textbf{S}=\left[\sup\limits_{\lambda\in\Lambda}a_\lambda,~\sup\limits_{\lambda\in\Lambda}b_\lambda\right]$.
	\end{remark}
	\begin{lemma}\label{g79}\emph{(See \cite{gourav})}.
		Let $\mathbf{F}_1$ and $\mathbf{F}_2$ be two proper extended IVFs defined on  $\mathcal{S}$, which is a nonempty subset of ${X}$. Then,
		\begin{enumerate}[\normalfont(i)]
			\item\label{g78} $\inf\limits_{x\in\mathcal{S}}\mathbf{F}_1(x)\oplus\inf\limits_{x\in\mathcal{S}}\mathbf{F}_2(x)\preceq\inf\limits_{x\in\mathcal{S}}\left(\mathbf{F}_1(x)\oplus\mathbf{F}_2(x)\right)$ and 
			\item\label{g77} $\sup\limits_{x\in\mathcal{S}}\left(\mathbf{F}_1(x)\oplus\mathbf{F}_2(x)\right)\preceq\sup\limits_{x\in\mathcal{S}}\mathbf{F}_1(x)\oplus\sup\limits_{x\in\mathcal{S}}\mathbf{F}_2(x).$
		\end{enumerate}
	\end{lemma}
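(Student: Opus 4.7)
The plan is to reduce the interval inequalities to the familiar real-valued inequalities $\inf f+\inf g\leq \inf(f+g)$ and $\sup(f+g)\leq \sup f+\sup g$ by working componentwise on the endpoint functions. Writing $\mathbf{F}_1(x)=[\underline{F}_1(x),\overline{F}_1(x)]$ and $\mathbf{F}_2(x)=[\underline{F}_2(x),\overline{F}_2(x)]$, the sum satisfies $\mathbf{F}_1(x)\oplus\mathbf{F}_2(x)=[\underline{F}_1(x)+\underline{F}_2(x),\,\overline{F}_1(x)+\overline{F}_2(x)]$ by the definition of $\oplus$. Since the $\mathbf{F}_i$ are proper extended IVFs, both endpoint functions are well-defined extended real-valued functions on $\mathcal S$, and all the infima/suprema that appear are well-defined in $[-\infty,+\infty]$.

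For part (i), I would first apply (the infimum analogue of) Remark \ref{nt5} to rewrite
\[
\inf_{x\in\mathcal S}\mathbf{F}_i(x)=\left[\inf_{x\in\mathcal S}\underline{F}_i(x),\,\inf_{x\in\mathcal S}\overline{F}_i(x)\right],\qquad i=1,2,
\]
and similarly
\[
\inf_{x\in\mathcal S}\bigl(\mathbf{F}_1(x)\oplus\mathbf{F}_2(x)\bigr)=\left[\inf_{x\in\mathcal S}\bigl(\underline{F}_1(x)+\underline{F}_2(x)\bigr),\,\inf_{x\in\mathcal S}\bigl(\overline{F}_1(x)+\overline{F}_2(x)\bigr)\right].
\]
Then, adding the first two intervals with $\oplus$ and invoking $\inf f+\inf g\leq \inf(f+g)$ on each of the lower and upper endpoints, the inequality $\preceq$ follows directly from Definition \ref{g99}\,(i).

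Part (ii) is entirely dual: the same expansion via Remark \ref{nt5} yields
\[
\sup_{x\in\mathcal S}\bigl(\mathbf{F}_1(x)\oplus\mathbf{F}_2(x)\bigr)=\left[\sup_{x\in\mathcal S}\bigl(\underline{F}_1(x)+\underline{F}_2(x)\bigr),\,\sup_{x\in\mathcal S}\bigl(\overline{F}_1(x)+\overline{F}_2(x)\bigr)\right],
\]
and the classical inequality $\sup(f+g)\leq \sup f+\sup g$ applied on both endpoints, followed by Definition \ref{g99}\,(i), yields the claim.

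There is no real obstacle beyond bookkeeping; the only point that needs a little care is that the values $\pm\infty$ may appear for extended IVFs, so one should note that the identities from Remark \ref{nt5} and the definition of $\oplus$ still make sense on $\overline{I(\mathbb R)}$, and that properness of $\mathbf{F}_1,\mathbf{F}_2$ prevents indeterminate expressions of the form $(-\infty)+(+\infty)$. Once this is observed, the proof is essentially a componentwise application of well-known scalar inequalities.
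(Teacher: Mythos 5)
Your proof is correct. Note that the paper itself gives no proof of this lemma: it is imported verbatim from the cited reference \cite{gourav}, so there is no internal argument to compare against. Your componentwise reduction --- writing $\mathbf{F}_1(x)\oplus\mathbf{F}_2(x)=[\underline{F}_1(x)+\underline{F}_2(x),\,\overline{F}_1(x)+\overline{F}_2(x)]$, pushing the infimum/supremum onto the endpoint functions via Remark \ref{nt5}, applying the scalar inequalities $\inf f+\inf g\leq\inf(f+g)$ and $\sup(f+g)\leq\sup f+\sup g$, and concluding with Definition \ref{g99}(i) --- is exactly the natural argument, and your attention to the extended-value cases is the only point of substance: properness rules out the value $-\infty$ everywhere and guarantees a point of finiteness, so neither the pointwise sums nor the combinations of infima/suprema can produce $(-\infty)+(+\infty)$. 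Two small bookkeeping remarks: the paper states Remark \ref{nt5} only for suprema, so in part (i) you are implicitly using its (equally immediate) infimum analogue, which you should state explicitly; and Definition \ref{12} formally requires upper bounds to lie in $I(\mathbb{R})$, so when a supremum or infimum equals $\pm\infty$ one is really working with the obvious extension of $\preceq$ to $\overline{I(\mathbb{R})}$, as the paper itself does tacitly. Neither point is a gap in the mathematics.
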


			\begin{definition}\label{g5}(\emph{Lower limit and $gH$-lower semicontinuity of an extended IVF} \cite{gourav}).
		The lower limit of an extended IVF $\textbf{F}$ at $\bar{x}\in{X}$, denoted by $\liminf\limits_{x\rightarrow \bar{x}}\textbf{F}(x)$, is defined  by
		\begin{eqnarray*}\label{g6}
			\liminf_{x\rightarrow \bar{x}}\textbf{F}(x)&=&\lim\limits_{\delta\downarrow 0}(\inf\{\textbf{F}(x):x\in B_{\delta}(\bar{x})\})=\sup_{\delta>0}(\inf\{\textbf{F}(x):x\in B_{\delta}(\bar{x})\}),
		\end{eqnarray*}
		where $B_{\delta}(\bar{x})$ is an open ball with radius $\delta$ centered at $\bar{x}$.  
		$\textbf{F}$ is called $gH$-lower semicontinuous ($gH$-lsc) at a point $\bar{x}$ if 
		$\textbf{F}(\bar{x})\preceq\liminf\limits_{x\rightarrow \bar{x}}\textbf{F}(x)$. Further, \textbf{F} is called $gH$-lsc on ${X}$ if \textbf{F} is $gH$-lsc at every $\bar{x}\in {X}$.
	\end{definition}
	\begin{remark}\label{rm1}
	  By Note 5 of \cite{gourav}, we see that \textbf{F} is $gH$-lsc at $\bar{x}\in X$ if and only if $\underline{F}$ and $\overline{F}$ both are lsc at $\bar{x}$.   
	\end{remark}

		\begin{lemma}\label{lm3}
		Let $\textbf{F}$ : $\mathbb{R}^{n}\rightarrow {\overline{I(\mathbb{R})}}$ be a proper convex IVF. Then, for all ${x},y\in \dom(\textbf{F}),$ we have
		$$\textbf{F}_\mathscr{D}({x})(y-{x})\preceq \textbf{F}(y)\ominus_{gH}\textbf{F}({x}).$$
			\end{lemma}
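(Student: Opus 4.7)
The plan is to reduce the interval inequality to the familiar scalar convex-analysis fact applied to the two endpoint functions of $\textbf{F}$. Write $\textbf{F}(\cdot)=[\underline{F}(\cdot),\overline{F}(\cdot)]$ on $\dom(\textbf{F})$. Since $\textbf{F}$ is a proper convex IVF, its effective domain is a convex subset of $\mathbb{R}^{n}$ containing both $x$ and $y$, so the segment $[x,y]$ lies entirely in $\dom(\textbf{F})$ and in particular $\textbf{F}(x+\lambda(y-x))\in I(\mathbb{R})$ for every $\lambda\in[0,1]$. By Lemma \ref{lm2}, the convexity of $\textbf{F}$ transfers to both $\underline{F}$ and $\overline{F}$ as real-valued convex functions on this segment.

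Next, I would invoke the classical scalar fact: for any real-valued convex function $f$ on a convex set containing $x$ and $y$, the difference quotient $\lambda\mapsto\tfrac{1}{\lambda}(f(x+\lambda(y-x))-f(x))$ is nondecreasing on $(0,1]$, and letting $\lambda\downarrow 0$ on the left while setting $\lambda=1$ on the right gives $f_{\mathscr{D}}(x)(y-x)\le f(y)-f(x)$. Applied to $\underline{F}$ and $\overline{F}$ separately, this produces the componentwise bounds
\begin{equation*}
\underline{F}_{\mathscr{D}}(x)(y-x)\le \underline{F}(y)-\underline{F}(x),\quad \overline{F}_{\mathscr{D}}(x)(y-x)\le \overline{F}(y)-\overline{F}(x).
\end{equation*}

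Finally, I would express both sides of the desired relation in closed form. By Theorem \ref{thm4},
\[
\textbf{F}_{\mathscr{D}}(x)(y-x)=\bigl[\min\{\underline{F}_{\mathscr{D}}(x)(y-x),\overline{F}_{\mathscr{D}}(x)(y-x)\},\max\{\underline{F}_{\mathscr{D}}(x)(y-x),\overline{F}_{\mathscr{D}}(x)(y-x)\}\bigr],
\]
while the definition of $\ominus_{gH}$ yields the structurally identical formula for $\textbf{F}(y)\ominus_{gH}\textbf{F}(x)$ with the endpoint differences $\underline{F}(y)-\underline{F}(x)$ and $\overline{F}(y)-\overline{F}(x)$ in place of the endpoint directional derivatives. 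Since $a_{1}\le b_{1}$ and $a_{2}\le b_{2}$ imply $\min\{a_{1},a_{2}\}\le\min\{b_{1},b_{2}\}$ and $\max\{a_{1},a_{2}\}\le\max\{b_{1},b_{2}\}$, the two scalar inequalities promote to coordinatewise dominance, which by Definition \ref{g99} is exactly $\textbf{F}_{\mathscr{D}}(x)(y-x)\preceq \textbf{F}(y)\ominus_{gH}\textbf{F}(x)$.

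The only delicate point is cosmetic: $\textbf{F}$ is extended-valued, whereas Lemma \ref{lm2} and Theorem \ref{thm4} are stated for IVFs into $I(\mathbb{R})$. This is harmless once attention is restricted to $\dom(\textbf{F})$, which is a nonempty convex set on which $\textbf{F}$ takes interval (non-infinite) values, and the $gH$-directional derivative at $x$ along $y-x$ only probes $\textbf{F}$ on a short initial segment of $[x,y]$ that remains inside $\dom(\textbf{F})$; hence all cited scalar and interval results apply verbatim there.
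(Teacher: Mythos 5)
Your proof is correct, but it takes a genuinely different route from the paper's. The paper argues intrinsically in interval arithmetic: it sets $d=y-x$ in the definition of the $gH$-directional derivative, applies the interval convexity inequality $\textbf{F}((1-\lambda)x+\lambda y)\preceq(1-\lambda)\odot\textbf{F}(x)\oplus\lambda\odot\textbf{F}(y)$, then uses the algebraic identity $\left((1-\lambda)\odot\textbf{A}\oplus\lambda\odot\textbf{B}\right)\ominus_{gH}\textbf{A}=\lambda\odot\left(\textbf{B}\ominus_{gH}\textbf{A}\right)$ from Lemma \ref{lm4}(\ref{lm43}) to cancel the $\frac{1}{\lambda}$ and pass to the limit, never splitting $\textbf{F}$ into its endpoint functions. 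You instead reduce everything to the scalar subgradient inequality for $\underline{F}$ and $\overline{F}$ and reassemble via Theorem \ref{thm4} and the monotonicity of $\min$ and $\max$. Your route is slightly longer and leans on Lemma \ref{lm2} and Theorem \ref{thm4}, but it has the merit of making explicit the monotonicity facts that the paper's one-line chain uses silently (namely that $\ominus_{gH}$ by a fixed interval and the limit operation both preserve $\preceq$, which ultimately comes down to exactly the $\min$/$\max$ comparison you spell out); the paper's route buys brevity and stays entirely at the interval level. Your closing remark about restricting the cited results to $\dom(\textbf{F})$ correctly handles the extended-valued setting.
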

		\begin{proof} By Definition \ref{d1} of $gH$-directional derivative, we have
			\begin{equation}\label{eq22}
			\textbf{F}_\mathscr{D}({x})(d)=
			\lim_{\lambda\to 0+}\frac{1}{\lambda}\odot\left(\textbf{F}({x}+\lambda d)\ominus_{gH}\textbf{F}({x})\right).
	\end{equation}
		By taking $d=y-x$ in (\ref{eq22}), we get
	\begin{eqnarray*}
		\textbf{F}_\mathscr{D}({x})(d)&=&\lim_{\lambda\to 0+}\frac{1}{\lambda}\odot\left(\textbf{F}({x}+\lambda(y-{x}))\ominus_{gH}\textbf{F}({x})\right)\\&=&\lim_{\lambda\to 0+}\frac{1}{\lambda}\odot\left(\textbf{F}((1-\lambda){x}+\lambda(y))\ominus_{gH}\textbf{F}({x})\right)\\&\preceq&\lim_{\lambda\to 0+}\frac{1}{\lambda}\odot\left\{((1-\lambda)\odot\textbf{F}({x})\oplus\lambda\odot\textbf{F}(y))\ominus_{gH}\textbf{F}({x})\right\}\\&=&\lim_{\lambda\to 0+}\frac{1}{\lambda}\odot\lambda\left(\textbf{F}(y)\ominus_{gH}\textbf{F}({x})\right)\text{ by (\ref{lm43})} \text{ of Lemma }\ref{lm4}\\&=&
		\textbf{F}(y)\ominus_{gH}\textbf{F}({x}).
		\end{eqnarray*}
		\end{proof}

	\begin{definition}\label{dd9}
	(Convergence of a sequence in $I(\mathbb{R})^{n}$). A sequence $\widehat{\textbf{G}}: \mathbb{N}\rightarrow I(\mathbb{R})^{n}$  is said to be convergent if there exists a $\widehat{\textbf{G}}\in I(\mathbb{R})^{n}$  such that 
	\[
	\lVert \widehat{\textbf{G}}_{k}\ominus_{gH} \widehat{\textbf{G}}\rVert_{I(\mathbb{R})^{n}}\rightarrow 0 \text{ as } k\rightarrow\infty,
	\]
	where $\widehat{\textbf{G}}({k})=\widehat{\textbf{G}}_{k}$, $k\in\mathbb{N}$.
	\end{definition}
\begin{remark}\label{nt1}
	It is noteworthy that if a sequence $\left\{\widehat{\textbf{G}}_{k}\right\} \text{ in } I(\mathbb{R})^{n},  \text{ where } {\widehat{\textbf{G}}_{k}}=(\textbf{G}_{k1},\textbf{G}_{k2},\ldots,\\\textbf{G}_{kn})\in I(\mathbb{R})^{n}$ with $\textbf{G}_{ki}=[\underline{g}_{ki}, \overline{g}_{ki}]$, converges to $\widehat{\textbf{G}}=(\textbf{G}_{1},\textbf{G}_{2},\ldots,\textbf{G}_{n})\in I(\mathbb{R})^{n}$ with $\textbf{G}_{i}=[\underline{g}_{i}, \overline{g}_{i}]$, then according to Definition \ref{dd8} and norm on $I(\mathbb{R})^{n}$, the  corresponding sequence $\left\{{\textbf{G}}_{ki}\right\}$ in $I(\mathbb{R})$ converges to ${\textbf{G}}_{i}\in I(\mathbb{R})$ for each $i=1,2,\ldots,n$. Also, by Definition \ref{dd9}, the sequences $\underline{g}_{ki}$ and $\overline{g}_{ki}$ in $\mathbb{R}$ converge to $\underline{g}_{i}$ and $\overline{g}_{i}$ in $\mathbb{R}$, respectively, for each $i=1,2,\ldots,n$. 
	\end{remark}

\subsection{Results from convex analysis}
	Apart from the results of interval analysis, we use the following results from classical convex analysis throughout the article.
	\begin{definition}\label{dd6}
(\emph{Projection} \cite{rockafellar2009variational}). Let $A$ be a nonempty closed set in $\mathbb{R}^{n}$. Then, the projection of a point $x\in \mathbb{R}^{n}$ onto the set A is denoted by $P(x~|~A)$, and is defined by
$$P(x~|~A)=\{y\in A :\lVert x-y\rVert = \inf\{\lVert x-u\rVert: u\in A\}\}.$$
\end{definition}
		\begin{definition} \label{g39}
		(\emph{Polar cone} \cite{rockafellar2009variational}). Let $A$  be a nonempty set in $\mathbb{R}^{n}$. Then, the polar cone of the set $A$ is
		\[A^{o}=\{x^{*}\in \mathbb{R}^{n}:\langle x^{*},x\rangle\leq 0~\text{ for all }x\in A\}.\]
		\end{definition}
		\begin{definition}
		(\emph{Tangent cone} \cite{rockafellar2009variational}). Let $A$ be a nonempty closed convex set in $\mathbb{R}^{n}$. Then, the tangent cone to the set $A$ at $x\in A$ is defined by
		\[T_{A}(x)=\cl\left(\bigcup\limits_{t>0} \frac{ A-x } { t }  \right).\]
		\end{definition}
		\begin{definition} \label{d2}
		(\emph{Normal cone} \cite{rockafellar2009variational}). The normal cone to a nonempty set $A$ in $\mathbb{R}^{n}$ at $x$ is polar of the tangent cone at $x$ to the $A$, i.e.,  $N_{A}(x)={T_{A}(x)}^{o}$.
		Therefore,
		\[N_{A}(x)=\left\{x^{*}\in \mathbb{R}^{n}:\langle x^{*},y-x\rangle\leq 0, \text{ for any~} y\in A\right\}.\]
		\end{definition}

\begin{lemma}\label{kr1}\emph{\cite{hiriart2004fundamentals}} 
Consider a convex set $S\subseteq {\mathbb{R}}^n$. Then, $\bar{x}$ is an element in the closure of $S$ if and only if $\langle x,\bar{x}\rangle\leq \psi^{*}_{S}({x})$ for all $x\in {\mathbb{R}}^n,$ where $\psi^{*}_{S}$ is the support function of $S$, i.e., $\psi^{*}_{S}({x})=\sup\limits_{s\in S}\langle x,s\rangle$.
\end{lemma}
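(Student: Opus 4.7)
The plan is to prove the stated equivalence by handling the two implications separately: the easy direction using only the definition of supremum together with continuity of the inner product, and the nontrivial direction by a contrapositive argument based on the strict separation theorem for a closed convex set and a point outside it.

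For the forward implication, I would start by assuming $\bar{x}\in\cl(S)$ and choosing a sequence $\{s_k\}\subseteq S$ with $s_k\to\bar{x}$. For an arbitrary $x\in\mathbb{R}^n$, the definition of $\psi^{*}_{S}$ gives $\langle x,s_k\rangle\le \psi^{*}_{S}(x)$ for every $k$. Passing to the limit and using continuity of $s\mapsto\langle x,s\rangle$ yields $\langle x,\bar{x}\rangle\le\psi^{*}_{S}(x)$, as required. This step is essentially mechanical.

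For the converse, I would argue by contraposition: assume $\bar{x}\notin\cl(S)$ and produce some $x\in\mathbb{R}^n$ violating the inequality. Because $S$ is convex, so is $\cl(S)$, and $\cl(S)$ is closed. Hence one can apply the strict separation theorem to the disjoint closed convex sets $\{\bar{x}\}$ and $\cl(S)$ to obtain $x\in\mathbb{R}^n\setminus\{0\}$ and $\alpha\in\mathbb{R}$ with
\begin{equation*}
\langle x,s\rangle \;<\;\alpha\;<\;\langle x,\bar{x}\rangle\quad\text{for all } s\in\cl(S).
\end{equation*}
Taking the supremum over $s\in S\subseteq\cl(S)$ gives $\psi^{*}_{S}(x)\le\alpha<\langle x,\bar{x}\rangle$, contradicting the assumed inequality. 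Therefore $\bar{x}\in\cl(S)$.

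The main obstacle is the reverse direction, and specifically the justification of strict separation; everything else is bookkeeping. Since $\{\bar{x}\}$ is compact and $\cl(S)$ is closed and convex with the two disjoint, the standard version of the Hahn--Banach/strict separation theorem in $\mathbb{R}^n$ applies directly, so I do not anticipate any genuine difficulty beyond citing that result. I would keep the write-up short, splitting it as ``Necessity'' and ``Sufficiency'' and invoking strict separation without re-deriving it, since this is a recalled classical fact used later in the paper.
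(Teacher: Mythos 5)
Your proposal is correct. The paper does not prove this lemma at all---it is quoted verbatim from Hiriart-Urruty and Lemar\'echal \cite{hiriart2004fundamentals} as a recalled classical fact---and your two-step argument (continuity of $s\mapsto\langle x,s\rangle$ for necessity, strict separation of the point $\bar{x}$ from the nonempty closed convex set $\cl(S)$ for sufficiency) is exactly the standard proof given in that reference, so nothing further is needed.
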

\begin{lemma}\label{aa}
\textnormal{\cite{burke2002weak}} Let $C$ be a nonempty closed convex subset of $\mathbb{R}^{n}$.
\begin{enumerate}[\normalfont(i)]
\item\label{dst3} For all $y\in \mathbb{R}^{n}$,  $ \dis(y,C)= \sup\limits_{x\in C} \dis(y,x+T_{C}(x))$, 
where the distance function is given by
$\dis(y,C)= \inf\limits_{\overline{x}\in C}\lVert y-\overline{x}\rVert$.
\item\label{dst4} Define $\rho(x)=\dis(x,C)$. Then, for all $x\in C$ and $d\in \mathbb{R}^{n}$,
\[ \rho_{\mathscr{D}}(x)(d)= \dis(d,T_{C}(x))= \psi^{*}_{\mathbb{B}\cap N_{C}(x)}(d).\]
Moreover, if $d\in N_{C}(x),$ then
$\rho_{\mathscr{D}}(x)(d)= \dis(d,T_{C}(x))= \psi^{*}_{\mathbb{B}\cap N_{C}(x)}(d)=\lVert d \rVert.$
\end{enumerate}
\end{lemma}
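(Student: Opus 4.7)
The plan is to prove the two parts of the lemma by essentially classical convex-analytic arguments, since both claims reduce to properties of the projection onto a closed convex set and the polar duality between $T_C(x)$ and $N_C(x)$.

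For part (i), I would establish the two inequalities separately. For ``$\geq$'', observe that for every $x \in C$ and every $c \in C$, convexity of $C$ gives $c - x \in T_C(x)$, so $C \subseteq x + T_C(x)$ and hence $\dis(y, x + T_C(x)) \leq \dis(y, C)$; taking the supremum over $x \in C$ yields the desired inequality. For ``$\leq$'', I would specialize $x$ to the projection $\bar{x} = P(y \mid C)$. The standard characterization of the metric projection onto a closed convex set gives $y - \bar{x} \in N_C(\bar{x})$, so $\langle y - \bar{x}, v \rangle \leq 0$ for every $v \in T_C(\bar{x})$. Expanding the square yields
\[
\|y - \bar{x} - v\|^2 = \|y - \bar{x}\|^2 - 2\langle y-\bar{x}, v\rangle + \|v\|^2 \geq \|y-\bar{x}\|^2,
\]
for every $v \in T_C(\bar{x})$, which gives $\dis(y, \bar{x} + T_C(\bar{x})) \geq \|y - \bar{x}\| = \dis(y, C)$, so the supremum is at least $\dis(y,C)$.

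For part (ii), I would proceed in three steps. \emph{Step 1.} Since $x \in C$, $\rho(x) = 0$, so after the change of variable $c = x + \lambda w$ (valid for $\lambda > 0$),
\[
\frac{\rho(x + \lambda d)}{\lambda} = \inf_{w \in (C - x)/\lambda} \|d - w\| = \dis\!\bigl(d,\, (C-x)/\lambda\bigr).
\]
By convexity of $C$, the family $\{(C - x)/\lambda\}_{\lambda > 0}$ is monotonically increasing as $\lambda \downarrow 0$, and $T_C(x) = \cl\bigl(\bigcup_{\lambda > 0} (C-x)/\lambda\bigr)$ by the very definition given in the excerpt; since the distance function is monotone under set inclusion and insensitive to closure, the limit equals $\dis(d, T_C(x))$. \emph{Step 2.} To identify this with $\psi^{*}_{\mathbb{B} \cap N_C(x)}(d)$, I would invoke the Moreau polar decomposition: since $T_C(x)$ is a closed convex cone with polar $N_C(x)$, one has $d = \proj_{T_C(x)}(d) + \proj_{N_C(x)}(d)$ with the two summands orthogonal, so $\dis(d, T_C(x)) = \|\proj_{N_C(x)}(d)\|$. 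On the other hand, because $N_C(x)$ is a cone, the maximum of $\langle u, d\rangle$ over $u \in \mathbb{B} \cap N_C(x)$ is attained at $\proj_{N_C(x)}(d)/\|\proj_{N_C(x)}(d)\|$ (or is zero when the projection vanishes), giving exactly $\|\proj_{N_C(x)}(d)\|$. \emph{Step 3.} If $d \in N_C(x)$, then $\proj_{N_C(x)}(d) = d$ and both common values reduce to $\|d\|$.

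The main technical obstacle will be rigorously justifying the passage to the limit $\dis(d,(C-x)/\lambda) \to \dis(d, T_C(x))$ as $\lambda \to 0^+$: one must argue that the distance function is continuous with respect to the monotone set-limit and that the closure in the definition of $T_C(x)$ does not alter the distance. This is handled by monotonicity (the distance is non-increasing in $\lambda$), the fact that $\dis(d, S) = \dis(d, \cl S)$, and the identity $\dis(d, \bigcup_\lambda S_\lambda) = \inf_\lambda \dis(d, S_\lambda)$ for a nested family. Once these set-convergence points are cleared, the Moreau decomposition and the conical structure of $N_C(x)$ make the remaining identifications immediate.
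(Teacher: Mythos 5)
The paper does not prove this lemma at all: it is imported verbatim from Burke and Deng \cite{burke2002weak} and used as a black box, so there is no in-paper argument to compare against. Your proof is correct and is the standard one: for (i), the inclusion $C\subseteq x+T_{C}(x)$ gives one inequality and evaluating at $\bar{x}=P(y\,|\,C)$, where $y-\bar{x}\in N_{C}(\bar{x})$ makes the cross term in $\lVert y-\bar{x}-v\rVert^{2}$ nonpositive, gives the other; for (ii), the identity $\rho(x+\lambda d)/\lambda=\dis\bigl(d,(C-x)/\lambda\bigr)$ together with the nestedness of $(C-x)/\lambda$ (which is where convexity of $C$ is genuinely used) and the insensitivity of $\dis(d,\cdot)$ to closures yields $\dis(d,T_{C}(x))$, and the Moreau decomposition $d=\proj_{T_{C}(x)}(d)+\proj_{N_{C}(x)}(d)$ identifies this with $\psi^{*}_{\mathbb{B}\cap N_{C}(x)}(d)=\lVert\proj_{N_{C}(x)}(d)\rVert$, which reduces to $\lVert d\rVert$ when $d\in N_{C}(x)$. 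The only points you flag as delicate (passing to the limit through the nested family, and the attainment of the supremum over $\mathbb{B}\cap N_{C}(x)$) are handled correctly by the monotonicity and orthogonality observations you state, so I see no gap.
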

\section{Support function in \texorpdfstring{$I(\mathbb{R})^{n}$}{Lg}} \label{Sec3}

In this section, we attempt to extend the conventional notion of support functions for subsets of $I(\mathbb{R})^{n}$. The derived concepts of support function are used later in Section \ref{Sec4} to derive dual characterizations of WSM for convex IVFs.

\begin{definition}\label{dd4}
(\emph{Support function of a subset of  $I(\mathbb{R})^{n}$}). Let $ \textbf{S} \text{ be a nonempty subset of } I(\mathbb{R})^{n}$. Then, the support function of  $\textbf{S}$ at ${x}\in\mathbb{R}^{n}$, denoted by $\boldsymbol{\psi^{*}_{\textbf{S}}}({x})$, is defined by \[
\boldsymbol{\psi^{*}_{\textbf{S}}}({x})= \sup_{\widehat{\textbf{A}}\in \textbf{S}} {x}^{\top}\odot \widehat{\textbf{A}}. 
\]
 \end{definition}
\begin{lemma}\label{g40}
		Let $\textbf{S}_{1},\textbf{S}_{2}$ be two nonempty subsets of $I(\mathbb{{R}})^{n} $ such that $\textbf{S}_{1}\subseteq\textbf{S}_{2}$. Then, for any $x\in {X}\subseteq\mathbb{R}^{n}$,
	$$\mathbf{\boldsymbol{\psi}^{*}_{\textbf{S}_{1}}}(x)\preceq \boldsymbol{\psi^{*}_{\textbf{S}_{2}}}(x).$$
	\end{lemma}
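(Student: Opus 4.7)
The plan is to reduce the lemma to the monotonicity of the supremum on subsets of $\overline{I(\mathbb{R})}$ and then conclude directly from the definition of the support function. The key observation is that $\boldsymbol{\psi^{*}_{\textbf{S}}}(x)$ is defined as a supremum of the family $\left\{{x}^{\top}\odot\widehat{\textbf{A}}:\widehat{\textbf{A}}\in\textbf{S}\right\}$, so an inclusion $\textbf{S}_{1}\subseteq\textbf{S}_{2}$ should translate directly into a $\preceq$-domination of the supremum over the smaller family by the supremum over the larger one.

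First I would fix an arbitrary $x\in X$ and take any $\widehat{\textbf{A}}\in\textbf{S}_{1}$. Since $\textbf{S}_{1}\subseteq\textbf{S}_{2}$, this element also lies in $\textbf{S}_{2}$, and hence by Definition \ref{dd4} combined with Definition \ref{12}, the interval $x^{\top}\odot\widehat{\textbf{A}}$ is dominated by $\boldsymbol{\psi^{*}_{\textbf{S}_{2}}}(x)$. Thus $\boldsymbol{\psi^{*}_{\textbf{S}_{2}}}(x)$ is an upper bound of the family $\left\{x^{\top}\odot\widehat{\textbf{A}}:\widehat{\textbf{A}}\in\textbf{S}_{1}\right\}$ in the ordering $\preceq$, and the characterization of the supremum as the least upper bound (Definition \ref{12}) then yields $\boldsymbol{\psi^{*}_{\textbf{S}_{1}}}(x)\preceq\boldsymbol{\psi^{*}_{\textbf{S}_{2}}}(x)$ at once.

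If a coordinate-wise verification is preferred, I would write $x^{\top}\odot\widehat{\textbf{A}}=[\underline{p}(\widehat{\textbf{A}}),\overline{p}(\widehat{\textbf{A}})]$, where $\underline{p}(\widehat{\textbf{A}})$ and $\overline{p}(\widehat{\textbf{A}})$ denote respectively the minimum and maximum of $\sum_{i}x_{i}\underline{a}_{i}$ and $\sum_{i}x_{i}\overline{a}_{i}$. Remark \ref{nt5} then gives
$$\boldsymbol{\psi^{*}_{\textbf{S}_{j}}}(x)=\left[\sup_{\widehat{\textbf{A}}\in\textbf{S}_{j}}\underline{p}(\widehat{\textbf{A}}),~\sup_{\widehat{\textbf{A}}\in\textbf{S}_{j}}\overline{p}(\widehat{\textbf{A}})\right]\text{ for }j=1,2,$$
and the classical monotonicity of real suprema under set inclusion delivers both endpoint inequalities, which is equivalent to $\preceq$-domination via Definition \ref{g99}.

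There is no substantive obstacle here; the result is a direct consequence of the least-upper-bound definition of interval suprema together with the standard monotonicity of real suprema. The only minor care required is to handle the interval ordering $\preceq$ consistently, for which either the least-upper-bound argument or the endpointwise reduction through Remark \ref{nt5} is entirely sufficient.
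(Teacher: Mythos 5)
Your proposal is correct and follows essentially the same route as the paper: both arguments observe that every $x^{\top}\odot\widehat{\textbf{A}}$ with $\widehat{\textbf{A}}\in\textbf{S}_{1}$ also satisfies $x^{\top}\odot\widehat{\textbf{A}}\preceq\boldsymbol{\psi^{*}_{\textbf{S}_{2}}}(x)$ because $\widehat{\textbf{A}}\in\textbf{S}_{2}$, and then pass to the supremum over $\textbf{S}_{1}$ using the least-upper-bound property. Your optional endpointwise verification via Remark \ref{nt5} is a harmless extra that the paper does not spell out.
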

	\begin{proof}  For any $\widehat{\textbf{B}}=\left([\underline{b}_{1},\overline{b}_{1}],[\underline{b}_{2},\overline{b}_{2}],\ldots,[\underline{b}_{n},\overline{b}_{n}]\right)\in \textbf{S}_{2}$ and $x\in X$, we have
$	{x}^{\top}\odot\widehat{\textbf{B}}\preceq\boldsymbol{\psi^{*}_{\textbf{S}_{2}}}(x).$
	Given $\textbf{S}_{1}\subseteq \textbf{S}_{2},$ i.e., for any  $\widehat{\textbf{D}}=\left([\underline{d}_{1},\overline{d}_{1}],[\underline{d}_{2},\overline{d}_{2}],\ldots,[\underline{d}_{n},\overline{d}_{n}]\right)\in \textbf{S}_{1}, \text{ we have } \widehat{\textbf{D}}\in\textbf{S}_{2}.$ Therefore, 
	\[ {x}^{\top}\odot\widehat{\textbf{D}}\preceq\boldsymbol{\psi^{*}_{\textbf{S}_{2}}}(x).\]
Since $\widehat{\textbf{D}}$ is arbitrary, we get
	\[
\boldsymbol{\psi^{*}_{\textbf{S}_{1}}}(x)=\sup_{\widehat{\textbf{E}}\in \textbf{S}_{1}} {x}^{\top}\odot\widehat{\textbf{E}}\preceq\boldsymbol{\psi^{*}_{\textbf{S}_{2}}}(x). 
\]	
\end{proof}
\begin{theorem} \label{lm1}
	Let $K$ be a nonempty closed convex cone in ${X}\subseteq\mathbb{R}^{n}$. Let $\textbf{P}$ and $\textbf{Q}$ be two nonempty subsets of $I(\mathbb{{R}})^{n}$. Then, 
\begin{eqnarray*}
\boldsymbol\psi^{*}_{\textbf{P}}(x)\preceq\boldsymbol\psi^{*}_{\textbf{Q}}(x) ~\text{for all}~ x\in K &\text{if and only if}& \boldsymbol\psi^{*}_{\textbf{P}}(x)\preceq\boldsymbol\psi^{*}_{\textbf{Q}\oplus K^{o}}(x)~\text{for all} ~x\in {X},
\end{eqnarray*}
where $K^{o}$ is the polar cone of $K$.
\end{theorem}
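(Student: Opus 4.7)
The plan is to reduce the stated equivalence to the classical bipolar-cone identity by deriving a clean formula for $\boldsymbol{\psi}^{*}_{\textbf{Q}\oplus K^{o}}$ in terms of $\boldsymbol{\psi}^{*}_{\textbf{Q}}$ and the scalar support function of $K^{o}$. The point is that the second summand $k \in K^{o} \subseteq \mathbb{R}^n$ is a real vector (a vector of degenerate intervals), so its contribution to the special product is purely scalar and can be pulled out of the interval operations.

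First I would fix an arbitrary $\widehat{\textbf{A}}=([\underline{a}_i,\overline{a}_i])_{i=1}^n \in \textbf{Q}$ and $k=(k_1,\ldots,k_n) \in K^{o}$, and expand $x^{\top}\odot(\widehat{\textbf{A}}\oplus k)$ from the definition of the special product. Writing $\widehat{\textbf{A}}\oplus k$ componentwise as $([\underline{a}_i+k_i,\overline{a}_i+k_i])_{i=1}^n$, the scalar $\langle x,k\rangle$ factors out of both the $\min$ and the $\max$ defining the special product, yielding the key identity
\[
x^{\top}\odot(\widehat{\textbf{A}}\oplus k) \;=\; (x^{\top}\odot\widehat{\textbf{A}}) \,\oplus\, [\langle x,k\rangle,\langle x,k\rangle].
\]
By Remark \ref{nt5}, suprema of a family of intervals are taken endpoint-wise, and since $\widehat{\textbf{A}}$ and $k$ range independently, the supremum over $\textbf{Q}\times K^{o}$ splits into a sum, giving
\[
\boldsymbol{\psi}^{*}_{\textbf{Q}\oplus K^{o}}(x) \;=\; \boldsymbol{\psi}^{*}_{\textbf{Q}}(x) \,\oplus\, \bigl[\,\sup_{k\in K^{o}}\langle x,k\rangle,\ \sup_{k\in K^{o}}\langle x,k\rangle\,\bigr].
\]

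Next I would invoke the bipolar theorem for closed convex cones, $K^{oo}=K$: for $x \in K$ one has $\langle x,k\rangle\leq 0$ for every $k\in K^{o}$, and since $0\in K^{o}$, the scalar supremum equals $0$; for $x \in X\setminus K$ there is some $k^{*}\in K^{o}$ with $\langle x,k^{*}\rangle>0$, and because $K^{o}$ is a cone, $\lambda k^{*}\in K^{o}$ for every $\lambda>0$, forcing $\sup_{k\in K^{o}}\langle x,k\rangle=+\infty$. Substituting back gives the explicit characterization
\[
\boldsymbol{\psi}^{*}_{\textbf{Q}\oplus K^{o}}(x) \;=\; \begin{cases}\boldsymbol{\psi}^{*}_{\textbf{Q}}(x), & x\in K,\\[2pt] [+\infty,+\infty], & x\in X\setminus K.\end{cases}
\]
Both implications then follow at once: the forward direction uses the $x\in K$ case to match the hypothesis and the $x\notin K$ case to make the inequality vacuous, while the converse simply restricts to $K$.

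The only genuine obstacle is the distributive identity in Step 1, which relies essentially on $k$ being a vector of degenerate intervals so that the special product with $k$ collapses to an ordinary inner product (cf.\ Remark \ref{n1}); once this is in place, the remainder is endpoint-level bookkeeping via Remark \ref{nt5} together with the standard bipolar theorem.
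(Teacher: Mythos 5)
Your proof is correct and follows essentially the same route as the paper: both arguments reduce the theorem to showing that $\boldsymbol\psi^{*}_{\textbf{Q}\oplus K^{o}}(x)$ equals $\boldsymbol\psi^{*}_{\textbf{Q}}(x)$ for $x\in K$ and equals $[+\infty,+\infty]$ for $x\notin K$ (the latter via a $z\in K^{o}$ with $\langle z,x\rangle>0$, i.e.\ the bipolar theorem, and scaling). The only difference is cosmetic: you obtain the equality on $K$ from the exact distributive identity $x^{\top}\odot(\widehat{\textbf{A}}\oplus k)=(x^{\top}\odot\widehat{\textbf{A}})\oplus[\langle x,k\rangle,\langle x,k\rangle]$, whereas the paper sandwiches it between the monotonicity of the support function under $\textbf{Q}\subseteq\textbf{Q}\oplus K^{o}$ (Lemma \ref{g40}) and the subadditivity estimate of Lemma \ref{g79}.
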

\begin{proof} 
Let $\boldsymbol{\psi^{*}_{\textbf{P}}}(x)\preceq\boldsymbol{\psi^{*}_{\textbf{Q}}}(x) ~\text{for all}~ x\in K$.
Consider $x\in K$. Clearly $\textbf{Q}\subseteq\textbf{Q}\oplus K^{o}$. Then, by Lemma \ref{g40}, we have 
\begin{align}
\boldsymbol\psi^{*}_\textbf{Q}(x)\preceq&~\boldsymbol\psi^{*}_{\textbf{Q}\oplus K^{o}}(x)\nonumber\\\preceq&~\boldsymbol{\psi^{*}_{\textbf{Q}}}(x) \oplus\psi^{*}_{K^{o}}(x)\text{ by (\ref{g77})} \text{ of Lemma \ref{g79}} \text{ and Definition }\ref{dd4}\nonumber\\\preceq&~ \boldsymbol{\psi^{*}_{\textbf{Q}}}(x)~ \text{because}~\psi^{*}_{K^{o}}(x)=0 .\nonumber
\end{align}
\text{Therefore,} 
\begin{align}\label{eq52}
\boldsymbol{\psi^{*}_\textbf{Q}}(x) =&~\boldsymbol\psi^{*}_{\textbf{Q}\oplus K^{o}}(x)~ \text{for all}~ x\in K.
\end{align}
Also, by hypothesis, we have $\boldsymbol{\psi^{*}_{\textbf{P}}}(x)\preceq\boldsymbol{\psi^{*}_{\textbf{Q}}}(x) ~\text{for all}~ x\in K$, and hence
\begin{equation}\label{kr3}
    \psi^{*}_{\textbf{P}}(x)\preceq\boldsymbol\psi^{*}_{\textbf{Q}\oplus K^{o}}(x)~\text{for all} ~x\in {K}.
\end{equation}
Suppose now if $x\notin K$, then there exists $z\in K^{o}$ such that $\langle z,x\rangle>0$.
Thus, for any  $\widehat{\textbf{A}}\in\textbf{Q}$ and  $\lambda\geq 0,~ \widehat{\textbf{A}}\oplus\lambda z \in\textbf{Q}\oplus K^{o}$. Also, ${x}^{\top}\odot(\widehat{\textbf{A}}\oplus \lambda z)$
\begin{eqnarray*}
&=&\Bigg[\min\left\{\sum\limits_{i=1}^{n} x_{i}(\underline{a}_{i}+\lambda z_i),\sum\limits_{i=1}^{n} x_{i}(\overline{a}_{i}+\lambda z_i)\right\},
\max\left\{\sum\limits_{i=1}^{n} x_{i}(\underline{a}_{i}+\lambda z_i),\sum\limits_{i=1}^{n} x_{i}(\overline{a}_{i}+\lambda z_i)\right\}\Bigg]\\
&=&\Bigg[\min\left\{\sum\limits_{i=1}^{n} x_{i}\underline{a}_{i}+\lambda {x}^{\top}z,\sum\limits_{i=1}^{n} x_{i}\overline{a}_{i}+\lambda {x}^{\top}z\right\},\max\left\{\sum\limits_{i=1}^{n} x_{i}\underline{a}_{i}+\lambda {x}^{\top}z,\sum\limits_{i=1}^{n} x_{i}\overline{a}_{i}+\lambda {x}^{\top}z\right\}\Bigg].
\end{eqnarray*}
Note that as $\lambda\rightarrow +\infty,~\lambda {x}^{\top} z\rightarrow +\infty$, and   therefore ${x}^{\top}\odot(\widehat{\textbf{A}}\oplus\lambda z)\rightarrow +\infty$, 
which implies 
\[ \boldsymbol\psi^{*}_{\textbf{Q}\oplus K^{o}}(x)= [+\infty,+\infty].
\]
Thus, 
\begin{equation}\label{kr4}
    \boldsymbol\psi^{*}_{\textbf{P}}(x)\preceq\boldsymbol\psi^{*}_{\textbf{Q}\oplus K^{o}}(x)~\text{for all} ~x\in X\backslash K.
\end{equation}
Therefore, from (\ref{kr3}) and (\ref{kr4}), we have
$$\boldsymbol\psi^{*}_{\textbf{P}}(x)\preceq\boldsymbol\psi^{*}_{\textbf{Q}\oplus K^{o}}(x)~\text{for all} ~x\in {X}.$$
Proof of the converse part follows from (\ref{eq52}). This completes the proof.
\end{proof}
%
%
%
%
%
%
%
%
\begin{lemma}\label{lm5}
Let P be nonempty a subset of $\mathbb{R}^{n}$ and \textbf{Q} be a nonempty closed convex subset of $I(\mathbb{R})^{n}$. Then,  for any $ x\in \mathbb{R}^{n}$,
\[
\psi^{*}_{P}(x)\preceq\boldsymbol\psi^{*}_{\textbf{Q}}(x)~\text{if and only if}~ P\subseteq\textbf{Q}.
\]
\end{lemma}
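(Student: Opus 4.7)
My plan is to split the biconditional into its easy direction and its harder converse, handling each with a different tool.

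For the sufficiency (``$\Leftarrow$''), I would identify each $p\in P$ with the degenerate interval vector $([p_1,p_1],\ldots,[p_n,p_n])\in I(\mathbb R)^n$. By Remark \ref{n1} the special product reduces to the usual inner product, $x^\top\odot p=[\langle x,p\rangle,\langle x,p\rangle]$, and hence
\[
\boldsymbol\psi^*_P(x)=\sup_{p\in P}\,x^\top\odot p=[\psi^*_P(x),\,\psi^*_P(x)].
\]
Applying Lemma \ref{g40} to the inclusion $P\subseteq \textbf{Q}$ in $I(\mathbb R)^n$ then delivers $\boldsymbol\psi^*_P(x)\preceq\boldsymbol\psi^*_{\textbf{Q}}(x)$, which is exactly the claimed $\psi^*_P(x)\preceq\boldsymbol\psi^*_{\textbf{Q}}(x)$.

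For the necessity (``$\Rightarrow$'') I would argue by contrapositive. Assume there is $\bar p\in P\setminus \textbf{Q}$, and look for $x\in\mathbb R^n$ for which $\langle x,\bar p\rangle$ strictly exceeds $\underline\psi_{\textbf{Q}}(x)=\sup_{\widehat{\textbf{A}}\in\textbf{Q}}\min\{\langle x,\underline a\rangle,\langle x,\overline a\rangle\}$; since $\langle x,\bar p\rangle\leq\psi^*_P(x)$, this contradicts the hypothesis. To produce such an $x$, I would view $\textbf{Q}$ as a closed convex subset of $\mathbb R^{2n}$ through the embedding $\widehat{\textbf{A}}\mapsto(\underline a,\overline a)$, and $\bar p$ as the degenerate point $(\bar p,\bar p)\in\mathbb R^{2n}$. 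The classical characterization of closure by the support function (Lemma \ref{kr1}), applied to the closed convex set $\tilde{\textbf{Q}}\subseteq\mathbb R^{2n}$ and the point $(\bar p,\bar p)\notin\tilde{\textbf{Q}}$, produces a separating pair $(y,z)\in\mathbb R^n\times\mathbb R^n$ with
\[
\langle y,\bar p\rangle+\langle z,\bar p\rangle \;>\; \sup_{\widehat{\textbf{A}}\in\textbf{Q}}\bigl(\langle y,\underline a\rangle+\langle z,\overline a\rangle\bigr).
\]

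The main obstacle is converting this $\mathbb R^{2n}$-separator $(y,z)$ into a single direction $x\in\mathbb R^n$ compatible with Definition \ref{dd4}'s special product. The special-product support function only ``sees'' the two corner points $\underline a,\overline a$ through a single test direction $x$ via a $\min$, whereas the $\mathbb R^{2n}$-separator uses two \emph{independent} directions. My plan for bridging this is to choose $(y,z)$ with compatible sign patterns on disjoint coordinate blocks so that, setting $x=y+z$, each block contributes its matching endpoint: on the coordinates supporting $y$ the lower endpoint realises the min, and on those supporting $z$ the upper endpoint does. This yields $\min\{\langle x,\underline a\rangle,\langle x,\overline a\rangle\}\leq\langle y,\underline a\rangle+\langle z,\overline a\rangle$ for every $\widehat{\textbf{A}}\in\textbf{Q}$, while the degeneracy of $\bar p$ gives $\langle x,\bar p\rangle=\langle y+z,\bar p\rangle=\langle y,\bar p\rangle+\langle z,\bar p\rangle$, so the strict separating inequality transfers to $x$. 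Closedness of $\textbf{Q}$ is essential throughout, since it guarantees the strict separation that drives the contradiction.
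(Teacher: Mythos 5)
Your ``$\Leftarrow$'' direction is correct and is essentially the paper's argument: identify $P$ with a set of degenerate interval vectors and invoke Lemma \ref{g40}. The genuine problem lies in the ``$\Rightarrow$'' direction, precisely at the step you yourself flag as the main obstacle. The separating pair $(y,z)$ supplied by Lemma \ref{kr1} applied in $\mathbb{R}^{2n}$ is dictated by the geometry of the embedded set and the point $(\bar p,\bar p)$; you do not get to \emph{choose} it with disjoint supports and compatible signs, and without that the key inequality $\min\{\langle x,\underline a\rangle,\langle x,\overline a\rangle\}\le\langle y,\underline a\rangle+\langle z,\overline a\rangle$ for $x=y+z$ has no justification. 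In fact the bridge cannot be built at all. Take $n=1$, $\textbf{Q}=\{[t,t+1]:t\in[0,1]\}$, which is closed and convex in $I(\mathbb{R})$, and $\bar p=1$, so that $[1,1]\notin\textbf{Q}$. The point $(1,1)$ is strictly separated from $\{(t,t+1):t\in[0,1]\}\subseteq\mathbb{R}^{2}$, e.g.\ by $(y,z)=(2,-1)$, but then $x=y+z=1$ gives $\min\{t,t+1\}=t\not\le 2t-(t+1)=t-1$. Worse, a direct computation gives
\[
\boldsymbol\psi^{*}_{\textbf{Q}}(x)=\sup_{t\in[0,1]}\,x\odot[t,t+1]=
\begin{cases}
[x,\,2x], & x\ge 0,\\
[x,\,0], & x<0,
\end{cases}
\]
so $\psi^{*}_{\{1\}}(x)=x\preceq\boldsymbol\psi^{*}_{\textbf{Q}}(x)$ for \emph{every} $x\in\mathbb{R}$ even though $[1,1]\notin\textbf{Q}$. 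Hence no direction $x$ can witness the violation: the forward implication is false for general closed convex $\textbf{Q}\subseteq I(\mathbb{R})^{n}$, and no repair of your bridging step is possible without strengthening the hypotheses.

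For comparison, the paper's proof of the forward direction takes a more elementary, componentwise route: using Remark \ref{nt5} it splits $\boldsymbol\psi^{*}_{\textbf{Q}}$ into the classical support functions of the lower- and upper-endpoint sets $S_{1}$ and $S_{2}$, applies Lemma \ref{kr1} to place $p$ in $S_{1}$, and then argues $p$ must also coincide with the matching upper endpoint. That avoids your $\mathbb{R}^{2n}$ separation machinery, but it stumbles on the same underlying difficulty: the decisive inequality $\langle x,\underline Q_{m}\rangle\le\langle x,\overline Q_{m}\rangle$ is asserted ``for all $x$'' on the strength of a case hypothesis made for a single $x$, and the example above shows it cannot be salvaged. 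So your sufficiency argument matches the paper, your necessity argument is a genuinely different (separation-based) strategy, but neither your bridging step nor the paper's endpoint-matching step is valid, because the implication they are both trying to establish fails in general.
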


\begin{proof}
Let $\psi^{*}_{P}(x)\preceq\boldsymbol\psi^{*}_{\textbf{Q}}(x)$ for  $x\in\mathbb{R}^{n}$.
Therefore, for any $p\in P$ and $x\in\mathbb{R}^{n}$, we have 
\begin{eqnarray*}
&&\langle x,p\rangle\preceq \sup_{\widehat{\textbf{Q}}_{i}\in \textbf{Q}} {x}^{\top}\odot\widehat{\textbf{Q}}_{i},~\text{where}~\widehat{\textbf{Q}}_{i}=\left(\left[\underline{q}_{i1},\overline{q}_{i1}\right],\left[\underline{q}_{i2},\overline{q}_{i2}\right],\ldots,\left[\underline{q}_{in},\overline{q}_{in}\right]\right)\\&\implies&
\langle x,p\rangle\preceq\sup\limits_{\widehat{\textbf{Q}}_{i}\in\textbf{Q}}\left[\min\left\{\sum\limits_{j=1}^{n} x_{j}\underline{q}_{ij},\sum\limits_{j=1}^{n} x_{j}\overline{q}_{ij}\right\},\max\left\{\sum\limits_{j=1}^{n} x_{j}\underline{q}_{ij},\sum\limits_{j=1}^{n} x_{j}\overline{q}_{ij}\right\}\right].
\end{eqnarray*}
We now consider the following two possible cases. 
\begin{enumerate}[$\bullet$ \text{Case} 1.]
    \item\label{kr2} Let $\sum\limits_{j=1}^{n} x_{j}\underline{q}_{ij}\leq\sum\limits_{j=1}^{n} x_{j}\overline{q}_{ij}$.
    In this case, we have
\begin{equation}\label{eq64}
\langle x,p\rangle\preceq\sup\limits_{\widehat{\textbf{Q}}_{i}\in\textbf{Q}}\left[\sum\limits_{j=1}^{n} x_{j}\underline{q}_{ij},\sum\limits_{j=1}^{n} x_{j}\overline{q}_{ij}\right].
\end{equation}

    Next, define two sets $S_{1}$ and $S_{2}$ such that $S_{1}=\left\{\underline{Q}_{1}, \underline{Q}_{2},\ldots,\underline{Q}_{n},\ldots\right\}$ and $S_{2}\\=\left\{\overline{Q}_{1}, \overline{Q}_{2},\ldots,\overline{Q}_{n},\ldots\right\}$, where $\underline{Q}_{i}=\left(\underline{q}_{i1},\underline{q}_{i2},\ldots,\underline{q}_{in}\right)\in\mathbb{R}^{n}$ and $\overline{Q}_{i}=\\\left(\overline{q}_{i1},\overline{q}_{i2},\ldots,\overline{q}_{in}\right)\in\mathbb{R}^{n}$.\\
    Therefore, (\ref{eq64}) along with Remark \ref{nt5} gives,
    \begin{align}\label{eq65}
        \langle x,p \rangle&\leq\sup\limits_{\underline{Q}_{i}\in S_{1}}\left\langle x,\underline{Q}_{i}\right\rangle\\
         \text{ and } \langle x,p\rangle&\leq\sup\limits_{\overline{Q}_{i}\in S_{2}}\left\langle x,\overline{Q}_{i}\right\rangle.
    \end{align}
     Thus, from (\ref{eq65}) and Lemma \ref{kr1}, we have $ p\in S_{1}, \text{ i.e., }  p=\underline{Q}_{m} \text{ for some } m$. \\ 
      To show that $p\in\textbf{Q}$, we have to show that $p=\overline{Q}_{m}$ as well. \\
     Note that 
     \begin{align}\label{eq66}
     \langle x,p\rangle& =\left\langle x, \underline{Q}_{m}\right\rangle\leq \left\langle x,\overline{Q}_{m}\right\rangle \text{ for all } x\in\mathbb{R}^{n}~\text{because}~ \sum\limits_{j=1}^{n} x_{j}\underline{q}_{ij}\leq\sum\limits_{j=1}^{n} x_{j}\overline{q}_{ij}\nonumber\\ \implies \langle x,p\rangle&\leq\sup\left\langle x,\overline{Q}_{m}\right\rangle \text{ for all } x\in\mathbb{R}^{n}\nonumber\\\implies
     \langle x,p\rangle&\leq\psi^{*}_{S^{'}}(x)\text{ for all } x\in\mathbb{R}^{n}, \text{ where } S' ~\text{is the singleton set}~ \left\{\overline{Q}_{m}\right\}.
     \end{align}
     Thus, from equation (\ref{eq66}) and Lemma \ref{kr1}, we have $p\in S^{'},$ i.e., $p=\overline{Q}_{m}$. \\Hence,  $p\in\textbf{Q}.$ Since $p$ is arbitrary, $P\subseteq{\textbf{Q}}$.
     \item Let $\sum\limits_{j=1}^{n} x_{j}\overline{q}_{ij}\leq\sum\limits_{j=1}^{n} x_{j}\underline{q}_{ij}$.
By following similar steps as in Case \ref{kr2}, in this case also, we get $P\subseteq{\textbf{Q}}$. 
\end{enumerate}
Proof of the converse part follows from Lemma \ref{g40}.
\end{proof}
 \begin{lemma}\label{lm9}
            For $x\in\mathbb{R}^{n}$ and $\widehat{\textbf{A}}=\left(\textbf{A}_{1},\textbf{A}_{2},\ldots,\textbf{A}_{n}\right)\in\textbf{S}\subseteq\mathbb{R}^{n},$ we have
            \[
            {x}^{\top}\odot\widehat{\textbf{A}}\preceq \lVert x \rVert\odot\left[\lVert \widehat {\textbf{A}} \rVert_{I(\mathbb{R})^{n}}, \lVert \widehat {\textbf{A}} \rVert_{I(\mathbb{R})^{n}} \right].
            \]
    \end{lemma}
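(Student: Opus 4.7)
The plan is to unfold both sides via the definitions of the special product, the interval norm, and the dominance relation, and then reduce the statement to an elementary scalar inequality. Write $\textbf{A}_{i}=[\underline{a}_{i},\overline{a}_{i}]$ for each $i$. By the definition of the special product,
\[
x^{\top}\odot\widehat{\textbf{A}}=[\alpha,\beta],\quad \text{where } \alpha=\min\!\left\{\sum_{i=1}^{n} x_{i}\underline{a}_{i},\,\sum_{i=1}^{n} x_{i}\overline{a}_{i}\right\},\ \beta=\max\!\left\{\sum_{i=1}^{n} x_{i}\underline{a}_{i},\,\sum_{i=1}^{n} x_{i}\overline{a}_{i}\right\}.
\]
On the right-hand side, since $\lVert x\rVert\ge 0$, scalar multiplication by $\lVert x\rVert$ of the degenerate interval $[\lVert\widehat{\textbf{A}}\rVert_{I(\mathbb{R})^{n}},\lVert\widehat{\textbf{A}}\rVert_{I(\mathbb{R})^{n}}]$ yields the degenerate interval $[\gamma,\gamma]$ with $\gamma=\lVert x\rVert\,\lVert\widehat{\textbf{A}}\rVert_{I(\mathbb{R})^{n}}$.

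Next, by Definition \ref{g99}, the relation $[\alpha,\beta]\preceq[\gamma,\gamma]$ is equivalent to the two scalar inequalities $\alpha\le\gamma$ and $\beta\le\gamma$. Since $\alpha\le\beta$, it suffices to prove $\beta\le\gamma$. For this, I would use the chain
\[
\beta\le\max\!\left\{\left|\sum_{i=1}^{n}x_{i}\underline{a}_{i}\right|,\,\left|\sum_{i=1}^{n}x_{i}\overline{a}_{i}\right|\right\}\le\sum_{i=1}^{n}|x_{i}|\,\max\{|\underline{a}_{i}|,|\overline{a}_{i}|\}=\sum_{i=1}^{n}|x_{i}|\,\lVert\textbf{A}_{i}\rVert_{I(\mathbb{R})}.
\]
The first bound is the triangle inequality; the second uses $|\underline{a}_{i}|,|\overline{a}_{i}|\le\lVert\textbf{A}_{i}\rVert_{I(\mathbb{R})}$ together with the definition of $\lVert\cdot\rVert_{I(\mathbb{R})}$.

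Finally, using the fact that $|x_{i}|\le\lVert x\rVert$ for every coordinate under the Euclidean norm, the last sum is bounded by $\lVert x\rVert\sum_{i=1}^{n}\lVert\textbf{A}_{i}\rVert_{I(\mathbb{R})}$, which, by the definition of $\lVert\cdot\rVert_{I(\mathbb{R})^{n}}$, equals $\lVert x\rVert\,\lVert\widehat{\textbf{A}}\rVert_{I(\mathbb{R})^{n}}=\gamma$. This yields $\beta\le\gamma$, and hence the desired dominance. There is no real obstacle: the only thing to be careful about is that the right-hand side is a degenerate interval, so that the $\preceq$ collapses to two scalar inequalities rather than requiring a more delicate interval comparison.
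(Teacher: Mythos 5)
Your proof is correct and follows essentially the same route as the paper's: unfold the special product, bound each sum by $\sum_{i}|x_{i}|\,\lVert\textbf{A}_{i}\rVert_{I(\mathbb{R})}$, and compare with $\lVert x\rVert\,\lVert\widehat{\textbf{A}}\rVert_{I(\mathbb{R})^{n}}$. If anything, yours is slightly more careful: the paper writes the passage from $\sum_{i}|x_{i}|\,\lVert\textbf{A}_{i}\rVert_{I(\mathbb{R})}$ to $\lVert x\rVert\,\lVert\widehat{\textbf{A}}\rVert_{I(\mathbb{R})^{n}}$ as an equality, whereas your use of $|x_{i}|\le\lVert x\rVert$ correctly treats it as an inequality.
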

    
    \begin{proof}
   
    Note that   ${x}^{\top}\odot\widehat{\textbf{A}}$
    \begin{eqnarray*}
    &=&\Bigg[\min\left\{\sum\limits_{i=1}^{n} x_{i}\underline{a}_{i}, \sum\limits_{i=1}^{n} x_{i}\overline{a}_{i}\right\},\max\left\{\sum\limits_{i=1}^{n} x_{i}\underline{a}_{i}, \sum\limits_{i=1}^{n} x_{i}\overline{a}_{i}\right\}\Bigg]\\
&\preceq&\Bigg[ \min \left \{\sum \limits_{i=1}^{n} \lvert x_{i} \rvert \lVert \textbf{A}_{i} \rVert_{I(\mathbb{R})} , \sum\limits_{i=1}^{n} \lvert x_{i}\rvert \lVert \textbf{A}_{i} \rVert_{I(\mathbb{R})} \right\},\max\left\{\sum\limits_{i=1}^{n} \lvert x_{i} \rvert \lVert \textbf{A}_{i} \rVert_{I(\mathbb{R})}, \sum\limits_{i=1}^{n} \lvert x_{i}\rvert \lVert \textbf{A}_{i} \rVert_{I(\mathbb{R})} \right\}\Bigg]\\
&=&\left[ \min \left \{\lVert x \rVert \lVert \widehat {\textbf{A}} \rVert_{I(\mathbb{R})^{n}} , \lVert x \rVert \lVert \widehat {\textbf{A}} \rVert_{I(\mathbb{R})^{n}} \right\},\max\left \{\lVert x \rVert \lVert \widehat {\textbf{A}} \rVert_{I(\mathbb{R})^{n}}, \lVert x \rVert \lVert \widehat {\textbf{A}} \rVert_{I(\mathbb{R})^{n}} \right\}\right]\\
&\preceq&  \lVert x \rVert\odot\left[\lVert \widehat {\textbf{A}} \rVert_{I(\mathbb{R})^{n}}, \lVert \widehat {\textbf{A}} \rVert_{I(\mathbb{R})^{n}} \right].
    \end{eqnarray*}
    \end{proof}
 
\begin{lemma}\label{lm11}
        The support function of a nonempty set $\textbf{S}\subseteq I(\mathbb{R})^{n}$ is finite everywhere if and only if $\textbf{S}$ is bounded.
\end{lemma}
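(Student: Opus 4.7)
The statement is a biconditional, so my plan splits into the two implications, and the work is almost entirely in the harder ``finite $\Rightarrow$ bounded'' direction.

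For the sufficiency direction (bounded $\Rightarrow$ finite), I would simply feed Lemma \ref{lm9} into Remark \ref{nt5}. If $M>0$ is a uniform bound so that $\lVert\widehat{\textbf{A}}\rVert_{I(\mathbb{R})^n}\leq M$ for every $\widehat{\textbf{A}}\in\textbf{S}$, then Lemma \ref{lm9} gives $x^{\top}\odot\widehat{\textbf{A}}\preceq[\lVert x\rVert M,\lVert x\rVert M]$. Applying the same lemma with $-x$ in place of $x$ and using that $(-x)^{\top}\odot\widehat{\textbf{A}}$ equals $[-\max\{u,v\},-\min\{u,v\}]$ with $u=\langle x,\underline a\rangle$ and $v=\langle x,\overline a\rangle$, I get that both endpoints of $x^{\top}\odot\widehat{\textbf{A}}$ lie in the finite interval $[-\lVert x\rVert M,\lVert x\rVert M]$ uniformly in $\widehat{\textbf{A}}\in\textbf{S}$. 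Computing the supremum componentwise via Remark \ref{nt5} then confines $\boldsymbol{\psi^{*}_{\textbf{S}}}(x)$ to that same interval, so it is finite.

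For the necessity direction (finite $\Rightarrow$ bounded), my plan is to reduce everything to the classical scalar case. Writing $\widehat{\textbf{A}}=([\underline a_1,\overline a_1],\ldots,[\underline a_n,\overline a_n])$, introduce the two endpoint sets
\[
S_{-}=\{(\underline a_1,\ldots,\underline a_n):\widehat{\textbf{A}}\in\textbf{S}\},\qquad S_{+}=\{(\overline a_1,\ldots,\overline a_n):\widehat{\textbf{A}}\in\textbf{S}\},
\]
both subsets of $\mathbb{R}^n$. Since $x^{\top}\odot\widehat{\textbf{A}}=[\min\{u,v\},\max\{u,v\}]$ with $u=\langle x,\underline a\rangle$ and $v=\langle x,\overline a\rangle$, and since $\sup_k\max\{f(k),g(k)\}=\max\{\sup f,\sup g\}$, Remark \ref{nt5} identifies the upper endpoint of $\boldsymbol{\psi^{*}_{\textbf{S}}}(x)$ with $\max\{\psi^{*}_{S_{-}}(x),\psi^{*}_{S_{+}}(x)\}$, where $\psi^{*}$ is the ordinary support function. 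By hypothesis this maximum is finite for every $x\in\mathbb{R}^n$, forcing both $\psi^{*}_{S_{-}}$ and $\psi^{*}_{S_{+}}$ to be finite everywhere. The standard fact from classical convex analysis (see, e.g., \cite{hiriart2004fundamentals}) that a nonempty set in $\mathbb{R}^n$ is bounded iff its support function is finite everywhere then yields boundedness of $S_{-}$ and $S_{+}$. Since $\lVert\widehat{\textbf{A}}\rVert_{I(\mathbb{R})^n}=\sum_{i=1}^n\max\{|\underline a_i|,|\overline a_i|\}$ is controlled by any pair of bounds on $S_{-}$ and $S_{+}$, this gives a uniform bound on $\textbf{S}$.

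The main obstacle is the necessity direction, and concretely the algebraic identification of the upper endpoint of $\boldsymbol{\psi^{*}_{\textbf{S}}}(x)$ with $\max\{\psi^{*}_{S_{-}}(x),\psi^{*}_{S_{+}}(x)\}$: once this bridge to the classical support function is in place, the known real-valued theorem finishes the argument. A secondary subtlety, present in the sufficiency direction, is that one must bound \emph{both} endpoints of $x^{\top}\odot\widehat{\textbf{A}}$, not just its upper endpoint given directly by Lemma \ref{lm9}, which is why I invoke Lemma \ref{lm9} a second time with $-x$.
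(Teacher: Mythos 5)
Your proof is correct, and the forward (bounded $\Rightarrow$ finite) half coincides with the paper's: both feed Lemma \ref{lm9} into the uniform bound $M$. (Your second invocation of Lemma \ref{lm9} with $-x$ is not strictly needed: by Remark \ref{nt5} the supremum is computed endpointwise, and the supremum of a nonempty family of reals that is bounded above is automatically a finite real, so upper bounds alone suffice.) The converse is where you genuinely diverge. The paper argues directly: from finiteness it takes a constant $M$ with $\boldsymbol{\psi^{*}_{\textbf{S}}}(x)\preceq M$ and then substitutes $x=\underline{a}/\lVert\underline{a}\rVert$ to conclude $\lVert\underline{a}\rVert\leq M$, and similarly for $\overline{a}$. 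You instead pass to the endpoint sets $S_{-}$ and $S_{+}$, identify the upper endpoint of $\boldsymbol{\psi^{*}_{\textbf{S}}}(x)$ with $\max\{\psi^{*}_{S_{-}}(x),\psi^{*}_{S_{+}}(x)\}$ via Remark \ref{nt5}, and then quote the classical theorem that a nonempty subset of $\mathbb{R}^{n}$ whose scalar support function is finite everywhere must be bounded. This buys something real: pointwise finiteness of $\boldsymbol{\psi^{*}_{\textbf{S}}}$ only yields a constant $M_{x}$ depending on $x$, and in the paper's substitution the direction $x$ depends on the very element $\widehat{\textbf{A}}$ being bounded, so the uniformity of $M$ is left implicit there; your reduction delegates exactly this uniformity issue to the classical result, where it is resolved by the convexity and positive homogeneity of the scalar support function. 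The cost is an appeal to an external theorem in place of a self-contained (if slightly less careful) computation.
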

\begin{proof}
Suppose that $\textbf{S}$ is bounded, i.e., we have $M>0$ such that 
$
\lVert \widehat {\textbf{A}} \rVert_{I(\mathbb{R})^{n}}\leq M \text{ for all } \widehat {\textbf{A}}=(\textbf{A}_{1},\textbf{A}_{2},\ldots,\textbf{A}_{n})\in\textbf{S} \text{ with } \textbf{A}_{i}=[\underline{a}_{i},\overline{a}_{i}] \text{ for each } i=1,2,\ldots,n.$
By Lemma \ref{lm9}  and $\lVert \widehat {\textbf{A}} \rVert_{I(\mathbb{R})^{n}}\\\leq$ $M$, for any $x\in\mathbb{R}^{n},$ we have 
\begin{equation*}
{x}^{\top}\odot\widehat{\textbf{A}} \preceq \lVert x \rVert\odot\left[\lVert \widehat {\textbf{A}} \rVert_{I(\mathbb{R})^{n}}, \lVert \widehat {\textbf{A}} \rVert_{I(\mathbb{R})^{n}} \right] \preceq \lVert x \rVert\odot\left[M, M \right] 
\preceq \lVert x \rVert M.
\end{equation*}
Since $\widehat{\textbf{A}}\in\textbf{S}$ is arbitrary chosen, therefore 
\begin{eqnarray*}
\boldsymbol{\psi^{*}_{\textbf{S}}}({x})&=& \sup_{\widehat{\textbf{A}}\in \textbf{S}} {x}^{\top}\odot \widehat{\textbf{A}}
\preceq\lVert x \rVert M.
\end{eqnarray*}
Hence, $\boldsymbol{\psi^{*}_{\textbf{S}}}({x})$ is finite everywhere.\newline
Conversely,  let $\boldsymbol{\psi^{*}_{\textbf{S}}}({x})$ is finite for every $x\in\mathbb{R}^{n}.$ Therefore, there exists an $M>0$ such that $
\boldsymbol{\psi^{*}_{\textbf{S}}}({x})\preceq M,$
which implies that for any $x\in\mathbb{R}^{n}$ and  ${\widehat{\textbf{A}}\in \textbf{S}}$, we have
\begin{eqnarray*}
 &&{x}^{\top}\odot\widehat{\textbf{A}}=\Bigg[\min\left\{\sum\limits_{i=1}^{n} x_{i}\underline{a}_{i}, \sum\limits_{i=1}^{n} x_{i}\overline{a}_{i}\right\},\max\left\{\sum\limits_{i=1}^{n} x_{i}\underline{a}_{i}, \sum\limits_{i=1}^{n} x_{i}\overline{a}_{i}\right\}\Bigg]\preceq M\\
 &\implies& \sum\limits_{i=1}^{n} x_{i}\underline{a}_{i}\leq M \text{ and }  \sum\limits_{i=1}^{n} x_{i}\overline{a}_{i}\leq M.
\end{eqnarray*}
Take $\sum\limits_{i=1}^{n} x_{i}\underline{a}_{i}\leq M$, then by Remark \ref{n1}, we have
\begin{eqnarray}\label{eq33}
\langle x, \underline{a}\rangle\leq M, \text{ where } \underline{a}=(\underline{a}_{1},\underline{a}_{2},\ldots,\underline{a}_{n})\in\mathbb{R}^{n}.
\end{eqnarray}
 If $\underline{a}\neq0$, choose $x=\frac{\underline{a}}{\lVert\underline{a}\rVert}$, then  (\ref{eq33}) gives
\begin{eqnarray*}
&&\left\langle \frac{\underline{a}}{\lVert\underline{a}\rVert}, \underline{a} \right\rangle \leq M\\
&\implies& \lVert\underline{a}\rVert\leq M, \text{ where } \underline{a}=(\underline{a}_{1},\underline{a}_{2},\ldots,\underline{a}_{n})\in\mathbb{R}^{n}\\ 
&\implies& \lvert \underline{a}_{i}\rvert \leq M \text{ for each } i=1,2,\ldots,n.
\end{eqnarray*}
Similarly, when we take $\sum\limits_{i=1}^{n} x_{i}\overline{a}_{i}\leq M$, we get $\lvert \overline{a}_{i} \rvert \leq M$ for each $i=1,2,\ldots,n$. Therefore, we have 
\begin{eqnarray*}
&&\textbf{A}_{i}=[\underline{a}_{i},\overline{a}_{i}]\preceq M \text{ for each } i=1,2,\ldots,n\\ 
&\implies& \widehat{\textbf{A}}\preceq M.
\end{eqnarray*}
Since $\widehat{\textbf{A}}\in\textbf{S}$ was arbitrary chosen, therefore we have
$
\widehat{\textbf{A}}\preceq M \text{ for all } \widehat{\textbf{A}}\in\textbf{S}. 
$
Hence, $\textbf{S}$ is bounded.
\end{proof}
\section{\texorpdfstring{$gH$}{Lg}-subdifferentiability of convex IVFs}\label{sec4_new}
In this section we develop $gH$-subdifferential calculus for convex IVFs that are used later to find dual characterization of WSM for convex IVFs.

	\begin{definition}\label{dd5}(\emph{$gH$-subdifferentiability}). Let $\textbf{F}:{X}\subseteq {\mathbb{R}}^n\rightarrow \overline{I(\mathbb{R})}$ be a proper convex IVF and  $\bar{x}\in \dom(\textbf{F})$. Then, $gH$-subdifferential of $\textbf{F}$ at $\bar{x}$, denoted by $  \mathbf{\partial} \textbf{F}(\bar{x})$ is defined by
	\begin{equation}\label{kg1}
	   \boldsymbol \partial \textbf{F}(\bar{x})=\left\{\widehat{\textbf{G}}\in I(\mathbb{R})^n:	(x-\bar{x})^{\top}\odot\widehat{\textbf{G}}\preceq\textbf{F}(x)\ominus_{gH}\textbf{F}(\bar{x}) \text{ for all}~x\in X\right\}.
	\end{equation}
	The elements of (\ref{kg1}) are known as $gH$-subgradients of $\textbf{F}$ at $\bar{x}$. Further, if  $\boldsymbol\partial \textbf{F}(\bar{x})\neq\emptyset$, we say that $\textbf{F}$ is $gH$-subdifferentiable at $\bar{x}.$
		\end{definition}
		\begin{example}\label{ex1}
		Consider $\textbf{F}:\mathbb{R}\rightarrow I(\mathbb{R})$ be a convex IVF such that $\textbf{F}(x)=\lvert x \rvert\odot\textbf{A},$ where $\textbf{0}\preceq\textbf{A}.$ Let us check $gH$-subdifferentiability of $\textbf{F}$ at $0$.
		\begin{eqnarray}
\boldsymbol\partial\textbf{F}({0})&=&\left\{{\textbf{G}}\in I(\mathbb{R}):	(x-0)\odot{\textbf{G}}\preceq\textbf{F}(x)\ominus_{gH}\textbf{F}(0) \text{ for all }x\in \mathbb{R}\right\}\nonumber\\&=&\left\{{\textbf{G}}\in I(\mathbb{R}):	x\odot\textbf{G}\preceq\lvert x\rvert\odot\textbf{A} \text{ for all } x\in \mathbb{R}\right\}\label{kr11}
\end{eqnarray}
	
		\begin{enumerate}[$\bullet$ \text{Case} 1.]
		    \item\label{13} $x\leq0$. In this case, for all $x\in\mathbb{R}$,  $(\ref{kr11})$ gives,
		    \begin{eqnarray*}
		    &&x\odot\textbf{G}\preceq (-x)\odot\textbf{A}
		    \implies (-1)\odot\textbf{A}\preceq\textbf{G}.
		    \end{eqnarray*}
	\item\label{kr41} $x>0$. In this case, for all $x\in\mathbb{R}$, (\ref{kr11}) gives,  
	\begin{eqnarray*}
		    &&x\odot\textbf{G}\preceq x\odot\textbf{A}
		    \implies\textbf{G}\preceq\textbf{A}.
		    \end{eqnarray*}
		\end{enumerate}
		Hence, from Case \ref{13} and Case \ref{kr41}, we have 
	$	\boldsymbol \partial \textbf{F}(0)=\{\textbf{G}\in I(\mathbb{R}) : (-1)\odot\textbf{A}\preceq\textbf{G}\preceq\textbf{A}$\}.
	\end{example}
\begin{figure}[H]
\centering
\begin{tikzpicture}[scale=2]
  \draw[->,thick] (-1.3, 0) -- (1.7, 0);
  \node  at (1.8, 0) {$X$}; 
  \draw[->,thick] (0, -1.3) -- (0, 1.7);
  \node  at (0, 1.8) {$F$}; 
  \node at (-0.085, -0.15) {$O$};
  \draw[dashed,color=red,thick]  (0, 0) -- (1.5, 1.5);
  \draw[dashed,color=red,thick]  (0, 0) -- (1.5, 0.75);
  \draw[dashed,color=red,thick]  (0, 0) -- (-1.5, 1.5);
  \draw[dashed,color=red,thick]  (0, 0) -- (-1.5, 0.75);
 \path[fill=lightgray] (0,0) -- (1.5,0.75) -- (1.5,1.5) -- cycle;
 \path[fill=lightgray] (0,0) -- (-1.5,0.75) -- (-1.5,1.5) -- cycle;
 \path[fill=mylightblue] (0,0) -- (1.5,0.9375) -- (1.5,1.3125) -- cycle;
 \path[fill=mylightblue] (0,0) -- (-1.5,-0.9375) -- (-1.5,-1.3125) -- cycle;
 \path[fill=mylightgreen] (0,0) -- (-1.5,0.9375) -- (-1.5,1.3125) -- cycle;
 \path[fill=mylightgreen] (0,0) -- (1.5,-0.9375) -- (1.5,-1.3125) -- cycle;
 
 \draw (0.5,1) node {$\mathbf{G}_2$}; 
 \draw[->] (0.6, 0.9) -- (0.9, 0.7); 
 
 \draw (-0.5,1) node {$\mathbf{G}_1$}; 
 \draw[->] (-0.6, 0.9) -- (-0.9, 0.7);
 
 \draw (-0.5,-1) node {$\mathbf{G}_2$}; 
 \draw[->] (-0.6, -0.9) -- (-0.9, -0.7);
 
 \draw (0.5,-1) node {$\mathbf{G}_1$}; 
 \draw[->] (0.6, -0.9) -- (0.9, -0.7);
  \end{tikzpicture}
  \caption{The IVF $\textbf{F}$ of Example \ref{ex1}}
    \label{Fig1}
\end{figure}

	In Fig. \ref{Fig1}, the IVF $\mathbf{F}$, with $\mathbf{A} = \left[\tfrac{1}{4},1\right]$, is drawn by the gray shaded region between two red dashed lines, and its possible two $gH$-subgradients $\textbf{G}_{1}$ and $\textbf{G}_{2}$ at $0$ are shown by blue and green shaded regions, respectively.

\begin{lemma}\label{lm7} Let $X$ be a nonempty convex subset of ${\mathbb{R}}^n$ and $\textbf{F}:X\rightarrow {\overline{I(\mathbb{R})}}$ be a proper convex IVF. Then, for any $\bar x\in\dom(\textbf{F})$ and $h\in\mathbb{R}^{n}$ such that $\bar{x}+h\in X$, the $gH$-subdifferential set of \textbf{F} at $\bar{x}$ is
		\[
		\boldsymbol \partial \textbf{F}(\bar{x})=\left\{ \widehat{\textbf{G}}\in I(\mathbb{R})^{n}: h^{\top}\odot\widehat{\textbf{G}}\preceq\textbf{F}_\mathscr{D}(\bar{x})(h)\right\}
		,\]
		
		where $\textbf{F}_\mathscr{D}(\bar{x})(h)$ is $gH$-directional derivative of \textbf{F} at $\bar x$ in the direction of $h$.
		\end{lemma}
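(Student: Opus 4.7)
The plan is to prove the two set inclusions separately, using the definition of the $gH$-subgradient for one direction and Lemma \ref{lm3} for the other. For the forward inclusion, I would fix $\widehat{\textbf{G}} \in \boldsymbol{\partial}\textbf{F}(\bar{x})$ and a direction $h \in \mathbb{R}^n$ with $\bar{x}+h\in X$. For any $\lambda\in(0,1]$, convexity of $X$ gives $\bar{x}+\lambda h\in X$, so plugging $x=\bar{x}+\lambda h$ into the defining inequality (\ref{kg1}) yields
\[
(\lambda h)^{\top}\odot\widehat{\textbf{G}}\preceq \textbf{F}(\bar{x}+\lambda h)\ominus_{gH}\textbf{F}(\bar{x}).
\]
Since $\lambda>0$, the definition of the special product gives $(\lambda h)^{\top}\odot\widehat{\textbf{G}}=\lambda\odot(h^{\top}\odot\widehat{\textbf{G}})$. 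Multiplying through by $1/\lambda$ (a positive scalar, so $\preceq$ is preserved) and passing $\lambda\to 0^{+}$, the right-hand side converges to $\textbf{F}_{\mathscr{D}}(\bar{x})(h)$ by Definition \ref{d1} and Theorem \ref{thm4}, and the limit inequality $h^{\top}\odot\widehat{\textbf{G}}\preceq \textbf{F}_{\mathscr{D}}(\bar{x})(h)$ follows, establishing the first inclusion.

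For the reverse inclusion, suppose $\widehat{\textbf{G}}\in I(\mathbb{R})^{n}$ satisfies $h^{\top}\odot\widehat{\textbf{G}}\preceq \textbf{F}_{\mathscr{D}}(\bar{x})(h)$ for every $h\in\mathbb{R}^{n}$ with $\bar{x}+h\in X$. Given an arbitrary $x\in X\cap\dom(\textbf{F})$, set $h=x-\bar{x}$; then $\bar{x}+h=x\in X$, so by hypothesis $(x-\bar{x})^{\top}\odot\widehat{\textbf{G}}\preceq \textbf{F}_{\mathscr{D}}(\bar{x})(x-\bar{x})$. Combining with Lemma \ref{lm3}, which gives $\textbf{F}_{\mathscr{D}}(\bar{x})(x-\bar{x})\preceq \textbf{F}(x)\ominus_{gH}\textbf{F}(\bar{x})$, and invoking transitivity of $\preceq$, we obtain the subgradient inequality at $x$. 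For $x\in X\setminus\dom(\textbf{F})$ the inequality holds vacuously since the right-hand side is $[+\infty,+\infty]$, so $\widehat{\textbf{G}}\in\boldsymbol{\partial}\textbf{F}(\bar{x})$, completing the proof.

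The main obstacle I expect is making the limit passage in the forward direction rigorous: I need to confirm that the weak interval order $\preceq$ is preserved under the $\lambda\to 0^{+}$ limit of the scaled $gH$-difference quotient. This reduces, by the definition of $\preceq$ as a componentwise inequality on endpoints together with Remark \ref{rm2}/Theorem \ref{thm4}, to the standard fact that weak inequalities between real numbers are preserved under convergent limits, so no genuine difficulty arises. The remainder of the argument is a bookkeeping exercise with the algebraic identities for $\odot$ and $\ominus_{gH}$ already established in Section \ref{Sec2}.
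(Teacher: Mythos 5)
Your proposal is correct and follows essentially the same route as the paper: the forward inclusion is the identical substitution $x=\bar{x}+\lambda h$ followed by passage to the limit in the difference quotient. For the reverse inclusion the paper merely asserts it holds ``by a similar reasoning as above,'' whereas you correctly identify that it is not the same reasoning at all but rather requires Lemma \ref{lm3} (the convexity inequality $\textbf{F}_{\mathscr{D}}(\bar{x})(x-\bar{x})\preceq\textbf{F}(x)\ominus_{gH}\textbf{F}(\bar{x})$) plus transitivity of $\preceq$; your version, including the vacuous case $x\notin\dom(\textbf{F})$ and the careful restriction $\lambda\in(0,1]$ to keep $\bar{x}+\lambda h\in X$, is the complete argument the paper leaves implicit.
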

		\begin{proof}
		Suppose $\widehat{\textbf{G}}\in\boldsymbol \partial \textbf{F}(\bar{x}).$ Then, by Definition \ref{dd5}, we have 
		\begin{equation}\label{kg2}
		    (x-\bar{x})^{\top}\odot\widehat{\textbf{G}}\preceq\textbf{F}(x)\ominus_{gH}\textbf{F}(\bar{x}) \text{ for all } x\in X.
		\end{equation}
		By taking $x=\bar{x}+\lambda h ~\text{with}~ \lambda>0$ and $h\in \mathbb{R}^{n}$ in (\ref{kg2}), we get
		\begin{eqnarray*}
		&& 	 h^{\top}\odot\widehat{\textbf{G}}\preceq\frac{\textbf{F}(\bar{x}+\lambda h)\ominus_{gH}\textbf{F}(\bar{x})}{\lambda}\\&\implies&
		h^{\top}\odot\widehat{\textbf{G}}\preceq\lim\limits_{\lambda\rightarrow 0}\frac{\textbf{F}(\bar{x}+\lambda h)\ominus_{gH}\textbf{F}(\bar{x})}{\lambda}\\&\implies&
		 h^{\top}\odot\widehat{\textbf{G}}\preceq\textbf{F}_{\mathscr{D}}(\bar{x})(h).
		\end{eqnarray*}
		Next, if we take any $\widehat{\textbf{G}}\in I(\mathbb{R})^{n}$ such that  $h^{\top}\odot\widehat{\textbf{G}}\preceq\textbf{F}_\mathscr{D}(\bar{x})(h)~ \text{for all}~  h\in \mathbb{R}^{n}$. Then, by a similar reasoning as above it can be seen that $\widehat{\textbf{G}}\in \boldsymbol \partial \textbf{F}(\bar{x})$.
		
		\end{proof}
	\begin{theorem}\label{kg9}
		Let $X$ be a nonempty convex subset of ${\mathbb{R}}^n$ and $\textbf{F}: X \rightarrow {\overline{I(\mathbb{R})}}$ be a proper convex IVF with $\textbf{F}(x)=[\underline{F}(x),\overline{F}(x)]$, where $\underline{F},~ \overline{F}:X\rightarrow \overline{\mathbb{R}}$ are extended real-valued functions. Then, for any $\bar{x}\in \dom(\textbf{F})$,~$\boldsymbol \partial \textbf{F}(\bar{x})$ is closed and convex.
	\end{theorem}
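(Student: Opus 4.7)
I would prove closedness and convexity separately, both directly from Definition \ref{dd5}. Throughout, write $\widehat{\textbf{G}}=(\textbf{G}_{1},\ldots,\textbf{G}_{n})$ with $\textbf{G}_{i}=[\underline{g}_{i},\overline{g}_{i}]$ and let $\vec{\underline{g}},\vec{\overline{g}}\in\mathbb{R}^{n}$ denote the endpoint vectors, so that $(x-\bar{x})^{\top}\odot\widehat{\textbf{G}}$ has lower endpoint $\min\bigl(\langle x-\bar{x},\vec{\underline{g}}\rangle,\langle x-\bar{x},\vec{\overline{g}}\rangle\bigr)$ and upper endpoint obtained by replacing $\min$ with $\max$.

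For closedness, take a sequence $\{\widehat{\textbf{G}}_{k}\}\subseteq\boldsymbol{\partial}\textbf{F}(\bar{x})$ converging to some $\widehat{\textbf{G}}\in I(\mathbb{R})^{n}$ in the sense of Definition \ref{dd9}. By Remark \ref{nt1}, the lower and upper endpoints of every coordinate of $\widehat{\textbf{G}}_{k}$ converge in $\mathbb{R}$ to those of $\widehat{\textbf{G}}$. Hence, for each fixed $x\in X$, both endpoints of $(x-\bar{x})^{\top}\odot\widehat{\textbf{G}}_{k}$, being finite linear combinations and the $\min/\max$ of such combinations of those endpoints, converge to the corresponding endpoints of $(x-\bar{x})^{\top}\odot\widehat{\textbf{G}}$. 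Since $\preceq$ is a pair of weak inequalities and hence closed under limits, passing to the limit in $(x-\bar{x})^{\top}\odot\widehat{\textbf{G}}_{k}\preceq\textbf{F}(x)\ominus_{gH}\textbf{F}(\bar{x})$ yields $\widehat{\textbf{G}}\in\boldsymbol{\partial}\textbf{F}(\bar{x})$.

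For convexity, pick $\widehat{\textbf{G}}_{1},\widehat{\textbf{G}}_{2}\in\boldsymbol{\partial}\textbf{F}(\bar{x})$ and $\lambda\in[0,1]$, and form $\widehat{\textbf{C}}=\lambda\odot\widehat{\textbf{G}}_{1}\oplus(1-\lambda)\odot\widehat{\textbf{G}}_{2}$. Fix $x\in X$ and put $\textbf{D}=\textbf{F}(x)\ominus_{gH}\textbf{F}(\bar{x})$. Using that $\preceq$ is preserved under $\oplus$ and under $\mu\odot\cdot$ for $\mu\geq 0$, combining the two defining inequalities gives
\[\lambda\odot\bigl[(x-\bar{x})^{\top}\odot\widehat{\textbf{G}}_{1}\bigr]\oplus(1-\lambda)\odot\bigl[(x-\bar{x})^{\top}\odot\widehat{\textbf{G}}_{2}\bigr]\preceq\lambda\odot\textbf{D}\oplus(1-\lambda)\odot\textbf{D}=\textbf{D},\]
where the final equality uses $\lambda\odot\textbf{D}\oplus(1-\lambda)\odot\textbf{D}=\textbf{D}$ for $\lambda\in[0,1]$. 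It then remains to check the distributive-type inequality $(x-\bar{x})^{\top}\odot\widehat{\textbf{C}}\preceq\lambda\odot[(x-\bar{x})^{\top}\odot\widehat{\textbf{G}}_{1}]\oplus(1-\lambda)\odot[(x-\bar{x})^{\top}\odot\widehat{\textbf{G}}_{2}]$; transitivity of $\preceq$ then gives $\widehat{\textbf{C}}\in\boldsymbol{\partial}\textbf{F}(\bar{x})$. With $\vec{\underline{c}}=\lambda\vec{\underline{g}}_{1}+(1-\lambda)\vec{\underline{g}}_{2}$ and $\vec{\overline{c}}=\lambda\vec{\overline{g}}_{1}+(1-\lambda)\vec{\overline{g}}_{2}$, this reduces to two scalar endpoint inequalities, and the upper one is immediate from the convexity of $\max$. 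The main obstacle will be the lower-endpoint inequality: $\min$ is concave, so a direct Jensen estimate points the wrong way. I would resolve it by a case analysis on which of $\langle x-\bar{x},\vec{\underline{g}}_{j}\rangle$ and $\langle x-\bar{x},\vec{\overline{g}}_{j}\rangle$ realizes the minimum for each $j\in\{1,2\}$, exploiting $\underline{g}_{j,i}\leq\overline{g}_{j,i}$ and, where needed, the convexity of $\underline{F}$ and $\overline{F}$ furnished by Lemma \ref{lm2} together with the subgradient inequality at auxiliary choices of $x$.
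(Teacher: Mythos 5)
Your closedness argument is correct and is essentially the paper's own: convergence in $I(\mathbb{R})^{n}$ gives convergence of the endpoint vectors (Remark \ref{nt1}), both endpoints of $(x-\bar{x})^{\top}\odot\widehat{\textbf{G}}_{k}$ are continuous (namely $\min$/$\max$ of linear) functions of those endpoints, and the weak inequalities defining $\boldsymbol{\partial}\textbf{F}(\bar{x})$ survive passage to the limit. No issue there.

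The convexity half has a genuine gap, and it sits exactly where you put your finger. Writing $a_{j}=\langle x-\bar{x},\vec{\underline{g}}_{j}\rangle$ and $b_{j}=\langle x-\bar{x},\vec{\overline{g}}_{j}\rangle$, the lower-endpoint inequality you would need is $\min\{\lambda a_{1}+(1-\lambda)a_{2},\,\lambda b_{1}+(1-\lambda)b_{2}\}\leq\lambda\min\{a_{1},b_{1}\}+(1-\lambda)\min\{a_{2},b_{2}\}$, which is the reverse of the concavity of $\min$ and fails whenever the minimum is realized by different endpoints for $j=1$ and $j=2$. No case analysis or appeal to the convexity of $\underline{F},\overline{F}$ can rescue this, because the convexity assertion itself fails in that situation. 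Concretely, take $n=2$, $\textbf{F}(x)=[0,\lvert x_{1}\rvert+\lvert x_{2}\rvert]$, $\bar{x}=0$. Both $\widehat{\textbf{G}}_{1}=([-1,1],[-1,1])$ and $\widehat{\textbf{G}}_{2}=([0,0],[-1,\tfrac{1}{2}])$ lie in $\boldsymbol{\partial}\textbf{F}(0)$, since $x^{\top}\odot\widehat{\textbf{G}}_{1}=[-\lvert x_{1}+x_{2}\rvert,\lvert x_{1}+x_{2}\rvert]$ and $x^{\top}\odot\widehat{\textbf{G}}_{2}=[\min\{-x_{2},\tfrac{x_{2}}{2}\},\max\{-x_{2},\tfrac{x_{2}}{2}\}]$ are both $\preceq[0,\lvert x_{1}\rvert+\lvert x_{2}\rvert]$ for every $x$. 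Yet their midpoint $\widehat{\textbf{C}}=([-\tfrac{1}{2},\tfrac{1}{2}],[-1,\tfrac{3}{4}])$ gives, at $x=(12,-7)$, $x^{\top}\odot\widehat{\textbf{C}}=[\min\{1,\tfrac{3}{4}\},\max\{1,\tfrac{3}{4}\}]=[\tfrac{3}{4},1]$, which is not $\preceq[0,19]=\textbf{F}(x)\ominus_{gH}\textbf{F}(0)$ because $\tfrac{3}{4}>0$. So $\boldsymbol{\partial}\textbf{F}(0)$ is not convex. You should know that the paper's own proof has the same defect: its Case 1 hypothesis concerns only which endpoint realizes the minimum for the \emph{combination}, yet the displayed factorization $d^{\top}\odot(\lambda_{1}\odot\widehat{\textbf{H}}\oplus\lambda_{2}\odot\widehat{\textbf{K}})=\lambda_{1}\odot d^{\top}\odot\widehat{\textbf{H}}\oplus\lambda_{2}\odot d^{\top}\odot\widehat{\textbf{K}}$ additionally requires the minima for $\widehat{\textbf{H}}$ and $\widehat{\textbf{K}}$ separately to be attained at the same (lower) endpoint sums, which does not follow; in the example above the two subgradients realize their minima at opposite endpoints at $x=(12,-7)$ and the factorization breaks down. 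The step you distrusted cannot be repaired; the convexity claim needs a weaker formulation or additional hypotheses.
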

	\begin{proof}
	We first prove the closedness of $\boldsymbol \partial \textbf{F}(\bar{x})$.
		Let $\left\{\widehat{\textbf{G}}_{k}\right\}$ be a sequence in $\boldsymbol \partial \textbf{F}(\bar{x})$, which converges to $\widehat{\textbf{G}}\in I(\mathbb{R})^{n}$, where ${\widehat{\textbf{G}}_{k}}=(\textbf{G}_{k1},\textbf{G}_{k2},\ldots,\textbf{G}_{kn})$ and $\widehat{\textbf{G}}=(\textbf{G}_{1},\textbf{G}_{2},\ldots,\textbf{G}_{n})$.
		Since ${\widehat{\textbf{G}}_{k}}\in\boldsymbol \partial \textbf{F}(\bar{x})$, for all $h\in \mathbb{R}^{n}$ such that $\bar x+h\in X$, we have
		\begin{align}
    	&h^{\top}\odot\widehat{\textbf{G}}_{k}\preceq\textbf{F}(\bar{x}+h)\ominus_{gH}\textbf{F}(\bar{x}),\nonumber\\\implies&\min\left\{\sum\limits_{i=1}^{n} h_{i}\underline{g}_{ki},\sum\limits_{i=1}^{n} h_{i}\overline{g}_{ki}\right\}\leq\min\Big\{\underline{F}(\bar{x}+h)-\underline{F}(\bar{x}),\overline{F}(\bar{x}+h)-\overline{F}(\bar{x})\Big\}\nonumber\\\label{eq60}
    	\text{ and }&\max\left\{\sum\limits_{i=1}^{n} h_{i}\underline{g}_{ki},\sum\limits_{i=1}^{n} h_{i}\overline{g}_{ki}\right\}\leq\max\Big\{\underline{F}(\bar{x}+h)-\underline{F}(\bar{x}),\overline{F}(\bar{x}+h)-\overline{F}(\bar{x})\Big\}.
		\end{align}
		Since the sequence $\left\{\widehat{\textbf{G}}_{k}\right\}$ converges to $\widehat{\textbf{G}}$, in view of Remark \ref{nt1}, the sequences $
		\{\underline{g}_{ki}\}$ and $\left\{\overline{g}_{ki}\right\}$ converge to $\underline{g}_{i}$ and $\overline{g}_{i}$, respectively, for each $i=1,2,\ldots,n$. Thus, 
		\begin{equation}\label{eq58}
		    \sum\limits_{i=1}^{n} h_{i}\underline{g}_{ki}\rightarrow\sum\limits_{i=1}^{n} h_{i}\underline{g}_{i} \text{ and } \sum\limits_{i=1}^{n} h_{i}\overline{g}_{ki}\rightarrow\sum\limits_{i=1}^{n} h_{i}\overline{g}_{i} \text{ as } k\rightarrow\infty.
		\end{equation}
		Therefore, in view of (\ref{eq60}) and (\ref{eq58}), we have
			\begin{eqnarray*}
		&&\left(\min\left\{\sum\limits_{i=1}^{n} h_{i}\underline{g}_{ki},\sum\limits_{i=1}^{n} h_{i}\overline{g}_{ki}\right\}\right)\rightarrow\left(\min\left\{\sum\limits_{i=1}^{n} h_{i}\underline{g}_{i},\sum\limits_{i=1}^{n} h_{i}\overline{g}_{i}\right\}\right)\\&&~~~~~~~~~~~~~~~~~~~~~~~~~~~~~~~~~~~~~~~~~~~~\leq\min\Big\{\underline{F}(\bar{x}+h)-\underline{F}(\bar{x}),\overline{F}(\bar{x}+h)-\overline{F}(\bar{x})\Big\}
		\end{eqnarray*}
		\text{ and }
		\begin{eqnarray*}
		&&\left(\max\left\{\sum\limits_{i=1}^{n} h_{i}\underline{g}_{ki},\sum\limits_{i=1}^{n} h_{i}\overline{g}_{ki}\right\}\right)\rightarrow\left(\max\left\{\sum\limits_{i=1}^{n} h_{i}\underline{g}_{i},\sum\limits_{i=1}^{n} h_{i}\overline{g}_{i}\right\}\right)\\&&~~~~~~~~~~~~~~~~~~~~~~~~~~~~~~~~~~~~~~~~~~~~\leq\max\Big\{\underline{F}(\bar{x}+h)-\underline{F}(\bar{x}),\overline{F}(\bar{x}+h)-\overline{F}(\bar{x})\Big\}.
		\end{eqnarray*}
		Thus,
		\begin{eqnarray*}
		&&\left[\min\left\{\sum\limits_{i=1}^{n} h_{i}\underline{g}_{i},\sum\limits_{i=1}^{n} h_{i}\overline{g}_{i}\right\},\max\left\{\sum\limits_{i=1}^{n} h_{i}\underline{g}_{i},\sum\limits_{i=1}^{n} h_{i}\overline{g}_{i}\right\}\right]\preceq\textbf{F}(\bar{x}+h)\ominus_{gH}\textbf{F}(\bar{x})\\&\implies&
	h^{\top}\odot\widehat{\textbf{G}}\preceq\textbf{F}(\bar{x}+h)\ominus_{gH}\textbf{F}(\bar{x}) \text{ for all } h\in X.
	\end{eqnarray*}
	Therefore, $\widehat{\textbf{G}}\in\boldsymbol \partial \textbf{F}(\bar{x})$, and hence $\boldsymbol \partial \textbf{F}(\bar{x})$ is closed.
	$\\$
To prove the convexity of $\boldsymbol \partial \textbf{F}(\bar{x})$, let 
$\widehat{\textbf{H}}=(\textbf{H}_{1},\textbf{H}_{2},\ldots,\textbf{H}_{n})$~and~$\widehat{\textbf{K}}=(\textbf{K}_{1},\textbf{K}_{2},\ldots,\textbf{K}_{n})$ be any two elements of~$\boldsymbol \partial \textbf{F}(\bar{x})$~with~$\textbf{H}_{i}$~=~$[\underline{h}_{i},\overline{h}_{i}]$~and~$\textbf{K}_{i}$~=~$[\underline{k}_{i},\overline {k}_{i}]$ for each $i=1,2,\ldots,n$. Then, for all $\lambda_{1},\lambda_{2}\geq0$, with $\lambda_{1}+\lambda_{2}=1$ and  for any $d\in \mathbb{R}^{n}$, we have
		\begin{eqnarray*}
		d^{\top}\odot\left(\lambda_{1}\odot\widehat{\textbf{H}}\oplus\lambda_{2}\odot\widehat{\textbf{K}}\right)&=&\Bigg[\min\left\{\sum\limits_{i=1}^{n} d_{i}( \lambda_{1}\underline{h}_{i}+\lambda_{2}\underline{k}_{i}), \sum\limits_{i=1}^{n} d_{i}( \lambda_{1}\overline{h}_{i}+\lambda_{2}\overline{k}_{i})\right\}, \\&&\max\left\{\sum\limits_{i=1}^{n} d_{i}( \lambda_{1}\underline{h}_{i}+\lambda_{2}\underline{k}_{i}), \sum\limits_{i=1}^{n} d_{i}( \lambda_{1}\overline{h}_{i}+\lambda_{2}\overline{k}_{i})\right\}\Bigg].
		\end{eqnarray*}	
		\begin{enumerate}[$ \bullet $ \text{Case} 1. ]
		    \item\label{cs1} Let  $\min\left\{\sum\limits_{i=1}^{n} d_{i}( \lambda_{1}\underline{h}_{i}+\lambda_{2}\underline{k}_{i}), \sum\limits_{i=1}^{n} d_{i}( \lambda_{1}\overline{h}_{i}+\lambda_{2}\overline{k}_{i})\right\}=\sum\limits_{i=1}^{n} d_{i}( \lambda_{1}\underline{h}_{i}+\lambda_{2}\underline{k}_{i})$. Then,  
		    \begin{eqnarray*} d^{\top}\odot\left(\lambda_{1}\odot\widehat{\textbf{H}}\oplus\lambda_{2}\odot\widehat{\textbf{K}}\right) &=&\left[\sum\limits_{i=1}^{n} d_{i}( \lambda_{1}\underline{h}_{i}+\lambda_{2}\underline{k}_{i}), \sum\limits_{i=1}^{n} d_{i}( \lambda_{1}\overline{h}_{i}+\lambda_{2}\overline{k}_{i})\right]\\&=&\left[\sum\limits_{i=1}^{n}\lambda_{1} d_{i}\underline{h}_{i}, \sum\limits_{i=1}^{n}\lambda_{1} d_{i} \overline{h}_{i}\Big]\oplus\Big[\sum\limits_{i=1}^{n}\lambda_{2} d_{i}\underline{k}_{i}, \sum\limits_{i=1}^{n}\lambda_{2} d_{i} \overline{k}_{i}\right]\\&=&\lambda_{1}\odot d^{\top}\odot\widehat{\textbf{H}}\oplus\lambda_{2}\odot d^{\top}\odot\widehat{\textbf{K}}\\&\preceq& \lambda_{1}\odot\textbf{F}_{\mathscr{D}}(\bar{x})(d)\oplus\lambda_{2}\odot\textbf{F}_{\mathscr{D}}(\bar{x})(d) \text{ by Lemma } \ref{lm7}\\&=& \textbf{F}_{\mathscr{D}}(\bar{x})(d) \text{ for any } d\in \mathbb{R}^{n}.
		    \end{eqnarray*}
            Hence, $d^{\top}\odot(\lambda_{1}\odot\widehat{\textbf{H}}\oplus\lambda_{2}\odot\widehat{\textbf{K}})\preceq\textbf{F}_{\mathscr{D}}(\bar{x})(d)$ for any $d\in \mathbb{R}^{n}$. Therefore, by Lemma \ref{lm7}, $\lambda_{1}\odot\widehat{\textbf{H}}\oplus\lambda_{2}\odot\widehat{\textbf{K}}\in\boldsymbol \partial \textbf{F}(\bar{x}).$
            \item Let  $\min\left\{\sum\limits_{i=1}^{n} d_{i}( \lambda_{1}\underline{h}_{i}+\lambda_{2}\underline{k}_{i}), \sum\limits_{i=1}^{n} d_{i}( \lambda_{1}\overline{h}_{i}+\lambda_{2}\overline{k}_{i})\right\}= \sum\limits_{i=1}^{n} d_{i}( \lambda_{1}\overline{h}_{i}+\lambda_{2}\overline{k}_{i})$.
            Proof contains similar steps as in Case \ref{cs1}.
		    \end{enumerate}
		    Thus, for any $\bar{x}\in\dom(\textbf{F}),  \boldsymbol \partial \textbf{F}(\bar{x})$ is convex.
	\end{proof}
	\begin{theorem}\label{th5}
	Let $X$ be a nonempty convex subset of ${\mathbb{R}}^n$ and let $\textbf{F}: X\rightarrow I(\mathbb{R})$ be a $gH$-differentiable convex IVF at $\bar{x}\in X$. Then, 
	    \[
	    \boldsymbol \partial \textbf{F}(\bar{x})=\left\{\nabla\textbf{F}(\bar{x})\right\}.
	    \]
	\end{theorem}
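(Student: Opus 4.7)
The plan is to prove the identity $\boldsymbol\partial\textbf{F}(\bar{x})=\{\nabla\textbf{F}(\bar{x})\}$ by establishing the two inclusions separately.

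For the forward inclusion $\nabla\textbf{F}(\bar{x})\in\boldsymbol\partial\textbf{F}(\bar{x})$, I would chain together Lemma \ref{lm3}, Lemma \ref{lm8}, and Theorem \ref{th3}. The convex subgradient-type inequality of Lemma \ref{lm3} yields $\textbf{F}_\mathscr{D}(\bar{x})(x-\bar{x})\preceq \textbf{F}(x)\ominus_{gH}\textbf{F}(\bar{x})$ for every $x\in X$. Since $\textbf{F}$ is $gH$-differentiable at $\bar{x}$, Lemma \ref{lm8} identifies the $gH$-directional derivative with the linear map $\textbf{L}_{\bar{x}}$, and Theorem \ref{th3} then gives the explicit form $\textbf{L}_{\bar{x}}(h)=\sum_{i=1}^n h_i\odot D_i\textbf{F}(\bar{x})$. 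Using the additivity and homogeneity of $\textbf{L}_{\bar{x}}$ from Definition \ref{dd11} to recast this as $h^{\top}\odot\nabla\textbf{F}(\bar{x})$, the subgradient inequality becomes $(x-\bar{x})^{\top}\odot\nabla\textbf{F}(\bar{x})\preceq \textbf{F}(x)\ominus_{gH}\textbf{F}(\bar{x})$ for every $x\in X$, which is exactly the defining condition (\ref{kg1}) placing $\nabla\textbf{F}(\bar{x})$ in $\boldsymbol\partial\textbf{F}(\bar{x})$.

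For the reverse inclusion, I would pick any $\widehat{\textbf{G}}\in\boldsymbol\partial\textbf{F}(\bar{x})$ and apply Lemma \ref{lm7} to obtain $h^{\top}\odot\widehat{\textbf{G}}\preceq \textbf{F}_\mathscr{D}(\bar{x})(h)$ for every $h\in\mathbb{R}^n$. The key trick is to apply the same inequality in the direction $-h$: using the identities $(-h)^{\top}\odot\widehat{\textbf{G}}=-(h^{\top}\odot\widehat{\textbf{G}})$ and $\textbf{F}_\mathscr{D}(\bar{x})(-h)=-\textbf{F}_\mathscr{D}(\bar{x})(h)$ (the latter from homogeneity of $\textbf{L}_{\bar{x}}$ together with Lemma \ref{lm8}), one gets $-(h^{\top}\odot\widehat{\textbf{G}})\preceq -\textbf{F}_\mathscr{D}(\bar{x})(h)$. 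Since the ordering $\preceq$ reverses under negation, this is equivalent to $\textbf{F}_\mathscr{D}(\bar{x})(h)\preceq h^{\top}\odot\widehat{\textbf{G}}$. Antisymmetry of $\preceq$ then forces $h^{\top}\odot\widehat{\textbf{G}}=\textbf{F}_\mathscr{D}(\bar{x})(h)$ for every $h\in\mathbb{R}^n$. Specializing $h$ to the $k$th standard basis vector $e_k$, the left side collapses to $\textbf{G}_k$ and the right side to $D_k\textbf{F}(\bar{x})$, and so $\widehat{\textbf{G}}=(D_1\textbf{F}(\bar{x}),\ldots,D_n\textbf{F}(\bar{x}))=\nabla\textbf{F}(\bar{x})$.

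The main technical obstacle I anticipate is the identification $\sum_{i=1}^n h_i\odot D_i\textbf{F}(\bar{x})=h^{\top}\odot\nabla\textbf{F}(\bar{x})$, since for a generic interval vector these two aggregation operations genuinely disagree: the Minkowski-type sum of scaled intervals is typically wider than the single interval produced by the special product. The hypothesis of $gH$-differentiability, via the additivity clause of Definition \ref{dd11}, supplies exactly the rigidity one needs for these two expressions to coincide at $\widehat{\textbf{A}}=\nabla\textbf{F}(\bar{x})$, which is the pivot point on which the forward inclusion turns.
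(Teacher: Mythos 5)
Your uniqueness half --- that any $\widehat{\textbf{G}}\in\boldsymbol\partial\textbf{F}(\bar{x})$ must equal $\nabla\textbf{F}(\bar{x})$ --- is exactly the paper's proof: Lemma \ref{lm8} and Lemma \ref{lm7} give $h^{\top}\odot\widehat{\textbf{G}}\preceq\textbf{L}_{\bar{x}}(h)=\sum_{i}h_{i}\odot D_{i}\textbf{F}(\bar{x})$, the substitution $h\mapsto-h$ reverses the inequality, antisymmetry of $\preceq$ yields equality, and $h=e_{i}$ extracts $\textbf{G}_{i}=D_{i}\textbf{F}(\bar{x})$. The paper stops there and simply declares the set equal to $\{\nabla\textbf{F}(\bar{x})\}$, so your insistence on also proving the membership $\nabla\textbf{F}(\bar{x})\in\boldsymbol\partial\textbf{F}(\bar{x})$ is a legitimate tightening: without it one has only $\boldsymbol\partial\textbf{F}(\bar{x})\subseteq\{\nabla\textbf{F}(\bar{x})\}$, and the set could a priori be empty.

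However, that forward inclusion has a genuine gap at exactly the point you flag. Lemmas \ref{lm3} and \ref{lm8} with Theorem \ref{th3} give $\sum_{i}(x_{i}-\bar{x}_{i})\odot D_{i}\textbf{F}(\bar{x})\preceq\textbf{F}(x)\ominus_{gH}\textbf{F}(\bar{x})$, whereas the defining condition (\ref{kg1}) involves the special product $(x-\bar{x})^{\top}\odot\nabla\textbf{F}(\bar{x})$. Writing $D_{i}\textbf{F}(\bar{x})=[\underline{d}_{i},\overline{d}_{i}]$, the lower endpoint of the special product is $\min\left\{\sum_{i}h_{i}\underline{d}_{i},\sum_{i}h_{i}\overline{d}_{i}\right\}$, which is always $\geq\sum_{i}\min\{h_{i}\underline{d}_{i},h_{i}\overline{d}_{i}\}$, the lower endpoint of the Minkowski-type sum; hence the special product is $\preceq$ the sum only when these lower endpoints coincide, and transitivity cannot be invoked in general. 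Declaring that the additivity clause of Definition \ref{dd11} ``supplies exactly the rigidity one needs'' names the obstacle without removing it. One concrete way to close it: homogeneity gives $\textbf{L}_{\bar{x}}(0)=\textbf{0}$, so additivity applied to the pair $(e_{i},-e_{i})$ forces $D_{i}\textbf{F}(\bar{x})\oplus(-1)\odot D_{i}\textbf{F}(\bar{x})=[\underline{d}_{i}-\overline{d}_{i},\overline{d}_{i}-\underline{d}_{i}]=\textbf{0}$, i.e.\ each $D_{i}\textbf{F}(\bar{x})$ is degenerate; then both aggregation operations collapse to the ordinary inner product and the membership follows from convexity of $\underline{F}$ and $\overline{F}$. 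As written, the key identity is asserted, not established.
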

	\begin{proof}
	Let $\widehat{\textbf{G}}\in\boldsymbol \partial \textbf{F}(\bar{x})$. Since $\textbf{F}$ is $gH$-differentiable at $\bar{x}$, with the help of Lemma \ref{lm8} and Lemma \ref{lm7}, we get
	\begin{align}\label{eq61}
	&h^{\top}\odot\widehat{\textbf{G}}\preceq\textbf{L}_{\bar{x}}(h) \text{ for all } h\in\mathbb{R}^{n}\nonumber\\
	\implies h^{\top}&\odot\widehat{\textbf{G}}\preceq\sum\limits_{i=1}^{n} h_{i}\odot D_{i}\textbf{F}(\bar{x}) \text{ by Theorem }\ref{th3}.
	\end{align}
	Replacing $h$ by $-h$ in (\ref{eq61}), we obtain
	\begin{eqnarray*}
	    &&(-h)^{\top}\odot\widehat{\textbf{G}}\preceq\sum\limits_{i=1}^{n} (-h_{i})\odot D_{i}\textbf{F}(\bar{x})
	    \end{eqnarray*}
	    \begin{equation}\label{eq62}
	    \implies \sum\limits_{i=1}^{n} h_{i}\odot D_{i}\textbf{F}(\bar{x})\preceq h^{\top}\odot\widehat{\textbf{G}} \text{ for all } h\in\mathbb{R}^{n}.
	    \end{equation}
	Thus, (\ref{eq61}) and (\ref{eq62}), simultaneously give
	\begin{equation}\label{eq63}
	 \sum\limits_{i=1}^{n} h_{i}\odot D_{i}\textbf{F}(\bar{x})= h^{\top}\odot\widehat{\textbf{G}} \text{ for all } h\in\mathbb{R}^{n}.
	 \end{equation}
	Therefore, for each $i\in\left\{1,2,\ldots,n\right\},$ by choosing $h=e_{i}$ in (\ref{eq63}), we have
	$
	D_{i}\textbf{F}(\bar{x})=\textbf{G}_{i}.
	$\\
	Hence,$
	\nabla\textbf{F}(\bar{x})=\widehat{\textbf{G}}.$
	Since $\widehat{\textbf{G}}\in\boldsymbol \partial \textbf{F}(\bar{x})$ is arbitrary, 
	$\boldsymbol \partial \textbf{F}(\bar{x})=\left\{\nabla\textbf{F}(\bar{x})\right\}$.
	\end{proof}
\begin{lemma}\label{lm6}
	Let $X$ be a nonempty convex subset of ${\mathbb{R}}^n$ and $\textbf{F}:X \rightarrow {\overline{I(\mathbb{R})}}$ be a proper convex IVF with $\textbf{F}(x)=[\underline{F}(x),\overline{F}(x)]$, where $\underline{F},~ \overline{F}:X\rightarrow \overline{\mathbb{R}}$ are extended real-valued functions. Then, the subdifferential set of $\textbf{F}$ at $\bar x\in\inte(\dom(\textbf{F}))$ can be obtained by the subdifferential sets of $\underline{F}$ and $\overline{F}$ at $\bar x$ and vice-versa. 
	\end{lemma}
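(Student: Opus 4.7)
The plan is to establish an explicit correspondence between $\boldsymbol\partial\textbf{F}(\bar x)$ and the pair of classical convex subdifferentials $\partial\underline F(\bar x)$, $\partial\overline F(\bar x)$, in both directions, by reducing the $gH$-subgradient inequality to the classical subgradient inequalities for $\underline F$ and $\overline F$ via the directional-derivative characterisation.

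First, I would invoke Lemma \ref{lm2} to note that convexity of $\textbf{F}$ is equivalent to joint convexity of $\underline F$ and $\overline F$. Since $\bar x\in\inte(\dom(\textbf{F}))$, both functions are proper convex real-valued functions finite on a neighbourhood of $\bar x$, so by classical convex analysis their subdifferentials $\partial\underline F(\bar x)$ and $\partial\overline F(\bar x)$ are nonempty, convex, and compact, with $g\in\partial f(\bar x)$ equivalent to $\langle g,h\rangle\le f'_{\mathscr D}(\bar x)(h)$ for every $h\in\mathbb R^{n}$. Next, by combining Lemma \ref{lm7} with Theorem \ref{thm4}, I would rewrite the membership $\widehat{\textbf{G}}=([\underline g_i,\overline g_i])_{i=1}^{n}\in\boldsymbol\partial\textbf{F}(\bar x)$ as the single interval inequality
\[
h^{\top}\odot\widehat{\textbf{G}}\preceq\bigl[\min\{\underline F_{\mathscr D}(\bar x)(h),\overline F_{\mathscr D}(\bar x)(h)\},\max\{\underline F_{\mathscr D}(\bar x)(h),\overline F_{\mathscr D}(\bar x)(h)\}\bigr]\text{ for every }h\in\mathbb R^{n},
\]
which decouples into two scalar inequalities on the lower and upper endpoints.

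For the forward direction I would perform a case analysis on the sign pattern of $h$ (to decide which of $\sum h_i\underline g_i,\sum h_i\overline g_i$ realises the min/max on the left) and on whether $\underline F_{\mathscr D}(\bar x)(h)$ dominates $\overline F_{\mathscr D}(\bar x)(h)$ or vice versa (on the right), and then specialise to $h=\pm e_i$ to extract scalar conditions showing that the endpoint vectors $\underline g,\overline g$ of $\widehat{\textbf G}$ (after an appropriate componentwise reassembly dictated by the case distinctions) produce vectors lying in $\partial\underline F(\bar x)$ and $\partial\overline F(\bar x)$. For the converse, given $\underline g\in\partial\underline F(\bar x)$ and $\overline g\in\partial\overline F(\bar x)$, I would form the interval vector $\widehat{\textbf G}$ with $i$-th component $[\min\{\underline g_i,\overline g_i\},\max\{\underline g_i,\overline g_i\}]$ and verify the defining inequality of Definition \ref{dd5} directly, once again via Theorem \ref{thm4} together with the classical subgradient inequalities $\langle\underline g,h\rangle\le\underline F_{\mathscr D}(\bar x)(h)$, $\langle\overline g,h\rangle\le\overline F_{\mathscr D}(\bar x)(h)$.

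The main obstacle will be the case analysis: both sides of the interval inequality feature $\min/\max$ that depend on the test direction, the left-hand side through the sign pattern of $h$ acting on the two real endpoint vectors of $\widehat{\textbf G}$, and the right-hand side through the relative size of $\underline F(x)-\underline F(\bar x)$ and $\overline F(x)-\overline F(\bar x)$. Coordinating these four resulting cases so that the lower and upper endpoint inequalities line up correctly with the classical subgradient inequalities for $\underline F$ and $\overline F$ is where the delicate bookkeeping lies, and the interior assumption $\bar x\in\inte(\dom(\textbf F))$ is essential to guarantee finiteness of the directional derivatives and nonemptiness of the two classical subdifferentials so that this reduction is justified.
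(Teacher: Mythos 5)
Your high-level plan---reduce to the endpoint functions via Lemma \ref{lm2}, use the interior assumption to get nonempty classical subdifferentials, and split the interval inequality $h^{\top}\odot\widehat{\textbf{G}}\preceq\textbf{F}(\bar x+h)\ominus_{gH}\textbf{F}(\bar x)$ into two scalar endpoint inequalities---is exactly the paper's. But two of your execution steps would fail. In the direction where you build $\widehat{\textbf{G}}$ from $\underline g\in\partial\underline F(\bar x)$ and $\overline g\in\partial\overline F(\bar x)$, your componentwise reassembly $[\min\{\underline g_i,\overline g_i\},\max\{\underline g_i,\overline g_i\}]$ destroys the identity that makes the verification work. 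For the ordered pairing $\textbf{G}_i=[\underline g_i,\overline g_i]$ used in the paper, the special product gives $h^{\top}\odot\widehat{\textbf{G}}=\bigl[\min\{\langle h,\underline g\rangle,\langle h,\overline g\rangle\},\max\{\langle h,\underline g\rangle,\langle h,\overline g\rangle\}\bigr]$, and the two classical subgradient inequalities bound min by min and max by max. With your reassembled components the candidate endpoints become $\sum_i h_i\min\{\underline g_i,\overline g_i\}$ and $\sum_i h_i\max\{\underline g_i,\overline g_i\}$, which for $n\ge 2$ need not lie between $\langle h,\underline g\rangle$ and $\langle h,\overline g\rangle$. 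Concretely, take $\underline F(x)=2x_1$ and $\overline F(x)=x_1+x_2+2$ on the closed unit ball, $\bar x=0$: then $\underline g=(2,0)$, $\overline g=(1,1)$, your $\widehat{\textbf{G}}=([1,2],[0,1])$, and for $h=(t,t)$ with $t>0$ one gets $h^{\top}\odot\widehat{\textbf{G}}=[t,3t]$ while $\textbf{F}(h)\ominus_{gH}\textbf{F}(0)=[2t,2t]$, so $3t\le 2t$ fails and the defining inequality of Definition \ref{dd5} is violated. (The paper's ordered pairing sidesteps this, at the cost of tacitly assuming $\underline g_i\le\overline g_i$ so that $[\underline g_i,\overline g_i]$ is a genuine interval.)

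The second problem is your plan to ``specialise to $h=\pm e_i$'' in the extraction direction. Testing coordinate directions gives only necessary conditions and is not sufficient for membership in a classical subdifferential: for $f(x)=\max\{|x_1|,|x_2|\}$ at the origin, $g=(1,1)$ satisfies $\langle g,\pm e_i\rangle\le f_{\mathscr D}(0)(\pm e_i)=1$ for all $i$, yet $\langle g,(1,1)\rangle=2>f_{\mathscr D}(0)(1,1)=1$, so $g\notin\partial f(0)$. The paper instead keeps $h$ arbitrary throughout: it splits $h^{\top}\odot\widehat{\textbf{G}}\preceq\textbf{F}(\bar x+h)\ominus_{gH}\textbf{F}(\bar x)$ into its min-endpoint and max-endpoint inequalities and, after a case analysis on which sums realise the min on each side, reads off $\langle h,\underline g\rangle\le\underline F(\bar x+h)-\underline F(\bar x)$ and $\langle h,\overline g\rangle\le\overline F(\bar x+h)-\overline F(\bar x)$ for every admissible $h$, which is the full subgradient inequality. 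Replacing your coordinate-direction test by this all-directions matching, and keeping the ordered pairing of endpoints, repairs both gaps and reduces your argument to the paper's.
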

   \begin{proof}
   Since $\textbf{F}$ is proper convex, with the help of Lemma \ref{lm2}, we note that 
   $\underline{F}$ and $\overline{F}$ are also convex. Therefore, by the property of  real-valued  proper convex functions, the subdifferential sets of $\underline{F}$ and $\overline{F}$ at $\bar x\in\inte(\dom(\textbf{F}))$ are nonempty (see \cite{beck2017first}).
Let $\underline{g}=(\underline{g}_{1},\underline{g}_{2},\ldots,\underline{g}_{n})\in\partial\underline{F}(\bar x)$ and $\overline{g}=(\overline{g}_{1},\overline{g}_{2},\ldots,\overline{g}_{n})\in\partial\overline{F}(\bar{x})$. Then, by Definition  \ref{dd5} of $gH$-subdifferentiability, for any $h\in \mathbb{R}^{n}$ such that $\bar{x}+h\in X$ , we have 
\begin{equation}\label{eq53}
h^{\top}\odot\underline{g}\leq\underline{F}(\bar{x}+h)-\underline{F}(\bar{x}) \text{ and } h^{\top}\odot\overline{g}\leq\overline{F}(\bar{x}+h)-\overline{F}(\bar{x}).    
\end{equation}
Note that $\textbf{F}(\bar{x}+h)\ominus_{gH}\textbf{F}(\bar{x})$
\begin{eqnarray*}
&=&\Big[\min\Big\{\underline{F}(\bar{x}+h)-\underline{F}(\bar{x}),\overline{F}(\bar{x}+h)-\overline{F}(\bar{x})\Big\},\max\Big\{\underline{F}(\bar{x}+h)-\underline{F}(\bar{x}),\overline{F}(\bar{x}+h)-\overline{F}(\bar{x})\Big\}\Big]\\&\implies&
\Big[\min\Big\{h^{\top}\odot\underline{g}, h^{\top}\odot\overline{g}\Big\},\max\Big\{h^{\top}\odot\underline{g}, h^{\top}\odot\overline{g}\Big\}\Big]\preceq\textbf{F}(\bar{x}+h)\ominus_{gH}\textbf{F}(\bar{x})\text{ by}~(\ref{eq53})\\&\implies& h^{\top}\odot\widehat{\textbf{G}}\preceq\textbf{F}(\bar{x}+h)\ominus_{gH}\textbf{F}(\bar{x}), \text{ where  } \widehat{\textbf{G}}=(\textbf{G}_{1},\textbf{G}_{2},\ldots,\textbf{G}_{n})~\text{with}~ \textbf{G}_{i}=[\underline{g}_{i},\overline{g}_{i}]\\&\implies& \widehat{\textbf{G}}\in\boldsymbol \partial \textbf{F}(\bar{x}). 
\end{eqnarray*}
Thus, for any $\underline{g}\in\partial\underline{F}(\bar x)$ and $\overline{g}\in\partial\overline{F}(\bar{x})$, we have the corresponding $\widehat{\textbf{G}}\in\boldsymbol \partial \textbf{F}(\bar{x})$.
$\\$
To prove the converse part, for any $\bar{x}\in\inte(\dom(\textbf{F}))$, take $\widehat{\textbf{G}}=(\textbf{G}_{1},\textbf{G}_{2},\ldots,\textbf{G}_{n})\in\boldsymbol \partial \textbf{F}(\bar{x})$ with $\textbf{G}_{i}=[\underline{g}_{i},\overline{g}_{i}]$, $i=1,2,\ldots,n$. Then, by Definition \ref{dd5} of $gH$-subdifferentiability, we have
\begin{eqnarray*}
&& h^{\top}\odot\widehat{\textbf{G}}\preceq\textbf{F}(\bar{x}+h)\ominus_{gH}\textbf{F}(\bar{x}) \text{ for all } h\in \mathbb{R}^{n} \text{ such that } \bar{x}+h\in X\\&\implies&\Bigg[\min\left\{\sum\limits_{i=1}^{n} h_{i}\underline{g}_{i}, \sum\limits_{i=1}^{n} h_{i}\overline{g}_{i}\right\},\max\left\{\sum\limits_{i=1}^{n} h_{i}\underline{g}_{i}, \sum\limits_{i=1}^{n} h_{i}\overline{g}_{i}\right\}\Bigg]\preceq\textbf{F}(\bar{x}+h)\ominus_{gH}\textbf{F}(\bar{x}).
\end{eqnarray*}
Therefore, 
\begin{equation}\label{eq54}
\min\left\{\sum\limits_{i=1}^{n} h_{i}\underline{g}_{i}, \sum\limits_{i=1}^{n} h_{i}\overline{g}_{i}\right\}\leq\min\left\{\underline{F}(\bar{x}+h)-\underline{F}(\bar{x}),\overline{F}(\bar{x}+h)-\overline{F}(\bar{x})\right\}
\end{equation}
and
\begin{equation}\label{eq55}
    \max\left\{\sum\limits_{i=1}^{n} h_{i}\underline{g}_{i}, \sum\limits_{i=1}^{n} h_{i}\overline{g}_{i}\right\}\leq\max\left\{\underline{F}(\bar{x}+h)-\underline{F}(\bar{x}),\overline{F}(\bar{x}+h)-\overline{F}(\bar{x})\right\}.
\end{equation}
We now consider the following two possible cases. 
\begin{enumerate}[$\bullet$ \text{Case} 1.]
    \item\label{kg3} Let $\min\left\{\sum\limits_{i=1}^{n} h_{i}\underline{g}_{i}, \sum\limits_{i=1}^{n} h_{i}\overline{g}_{i}\right\}=\sum\limits_{i=1}^{n} h_{i}\underline{g}_{i}$ and $\min\Big\{\underline{F}(\bar{x}+h)-\underline{F}(\bar{x}),\overline{F}(\bar{x}+h)-\overline{F}(\bar{x})\Big\}\\=\underline{F}(\bar{x}+h)-\underline{F}(\bar{x})$.
    In this case, by (\ref{eq54}) and (\ref{eq55}), we have
    \begin{eqnarray*}
    &&\sum\limits_{i=1}^{n} h_{i}\underline{g}_{i} \leq\underline{F}(\bar{x}+h)-\underline{F}(\bar{x}) \text{ and } \sum\limits_{i=1}^{n} h_{i}\overline{g}_{i}\leq\overline{F}(\bar{x}+h)-\overline{F}(\bar{x})\\
    &\implies&
    h^{\top}\odot\underline{g}\leq\underline{F}(\bar{x}+h)-\underline{F}(\bar{x}) \text{ and } h^{\top}\odot\overline{g}\leq\overline{F}(\bar{x}+h)-\overline{F}(\bar{x}),\\&&
    \text{where } \underline{g}=(\underline{g}_{1},\underline{g}_{2},\ldots,\underline{g}_{n})\in\mathbb{R}^{n} \text{ and } \overline{g}=(\overline{g}_{1},\overline{g}_{2},\ldots,\overline{g}_{n})\in\mathbb{R}^{n}.
    \end{eqnarray*}
    Thus, we get $\underline{g}\in\partial\underline{F}(\bar x)$ and $\overline{g}\in\partial\overline{F}(\bar{x})$, which are required.
    \item\label{kg4} Let $\min\left\{\sum\limits_{i=1}^{n} h_{i}\underline{g}_{i}, \sum\limits_{i=1}^{n} h_{i}\overline{g}_{i}\right\}=\sum\limits_{i=1}^{n} h_{i}\overline{g}_{i}$ and $\min\Big\{\underline{F}(\bar{x}+h)-\underline{F}(\bar{x}),\overline{F}(\bar{x}+h)-\overline{F}(\bar{x})\Big\}=\underline{F}(\bar{x}+h)-\underline{F}(\bar{x}).$
    Proof contains similar steps as in Case \ref{kg3}. 
\end{enumerate}
From Case \ref{kg3} and Case \ref{kg4}, it is clear that for any $\widehat{\textbf{G}}\in\boldsymbol \partial \textbf{F}(\bar{x})$, we can obtain the subgradients of $\overline{F}$ and $\underline{F}$ at $\bar{x}$. This completes the proof for the converse part.
\end{proof}
\begin{remark}\label{kg5}
By Lemma \ref{lm6}, it is easy to note that for any proper convex IVF $\textbf{F}(x)=[\underline{F}(x),\overline{F}(x)]$ and $\bar{x}\in\inte(\dom(\textbf{F}))$, $\boldsymbol \partial \textbf{F}(\bar{x})$ is nonempty.
\end{remark}
\begin{theorem}\label{17} Let $X$ be a nonempty convex subset of $\mathbb{R}^{n}$ and  $\textbf{F}:X \rightarrow {\overline{I(\mathbb{R})}}$ be a proper convex IVF with $\textbf{F}(x)=[\underline{F}(x),\overline{F}(x)]$, where $\underline{F},~ \overline{F}:X\rightarrow \overline{\mathbb{R}}$ are extended real-valued functions. Then, at any $\bar x\in \inte(\dom(\textbf{F}))$,
	\[
	\textbf{F}_\mathscr{D}(\bar{x})(h)=\boldsymbol\psi^{*}_{\boldsymbol \partial \textbf{F}(\bar{x})}(h) \text{ for all } h\in\mathbb{R}^{n} \text{ such that }\bar{x}+h\in X,
		\]
		where  $\textbf{F}_\mathscr{D}(\bar{x})(h)$ is $gH$-directional derivative of \textbf{F} at $\bar x$ in the direction of $h$.
	\end{theorem}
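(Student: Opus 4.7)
The plan is to establish the equality by proving $\boldsymbol\psi^{*}_{\boldsymbol \partial \textbf{F}(\bar{x})}(h) \preceq \textbf{F}_\mathscr{D}(\bar{x})(h)$ together with the reverse inequality. The first direction is almost immediate from Lemma \ref{lm7}: every $\widehat{\textbf{G}}\in\boldsymbol\partial\textbf{F}(\bar x)$ satisfies $h^{\top}\odot\widehat{\textbf{G}}\preceq \textbf{F}_\mathscr{D}(\bar x)(h)$, so $\textbf{F}_\mathscr{D}(\bar x)(h)$ is an upper bound of the family $\{h^{\top}\odot\widehat{\textbf{G}} : \widehat{\textbf{G}} \in \boldsymbol\partial\textbf{F}(\bar x)\}$. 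The least-upper-bound property in Definition \ref{12} then forces the support function to be dominated by $\textbf{F}_\mathscr{D}(\bar x)(h)$.

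For the reverse direction, I would decompose $\textbf{F}_\mathscr{D}(\bar x)(h)$ using Theorem \ref{thm4} into the endpoints $\underline{F}_\mathscr{D}(\bar x)(h)$ and $\overline{F}_\mathscr{D}(\bar x)(h)$. Since $\underline{F}$ and $\overline{F}$ are proper real-valued convex functions (Lemma \ref{lm2}) and $\bar x\in\inte(\dom(\textbf{F}))$, classical convex analysis provides the max formulas $\underline{F}_\mathscr{D}(\bar x)(h)=\max_{\underline{g}\in\partial\underline{F}(\bar x)}\langle \underline{g}, h\rangle$ and $\overline{F}_\mathscr{D}(\bar x)(h)=\max_{\overline{g}\in\partial\overline{F}(\bar x)}\langle \overline{g}, h\rangle$, with the maxima attained at some $\underline{g}^{*}$ and $\overline{g}^{*}$. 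By the forward direction of Lemma \ref{lm6}, the interval vector $\widehat{\textbf{G}}^{*}$ assembled componentwise from $(\underline{g}^{*}, \overline{g}^{*})$ lies in $\boldsymbol\partial\textbf{F}(\bar x)$. A direct evaluation of the special product $h^{\top}\odot\widehat{\textbf{G}}^{*}$ then shows that its two endpoints coincide with $\min\{\underline{F}_\mathscr{D}(\bar x)(h),\overline{F}_\mathscr{D}(\bar x)(h)\}$ and $\max\{\underline{F}_\mathscr{D}(\bar x)(h),\overline{F}_\mathscr{D}(\bar x)(h)\}$, giving $h^{\top}\odot\widehat{\textbf{G}}^{*}=\textbf{F}_\mathscr{D}(\bar x)(h)$. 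Combined with $h^{\top}\odot\widehat{\textbf{G}}^{*}\preceq\boldsymbol\psi^{*}_{\boldsymbol\partial\textbf{F}(\bar x)}(h)$, this yields the reverse inequality.

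Equivalently, one may argue endpoint-wise: by Remark \ref{nt5}, the two endpoints of $\boldsymbol\psi^{*}_{\boldsymbol\partial\textbf{F}(\bar x)}(h)$ are the suprema of the corresponding endpoints of $h^{\top}\odot\widehat{\textbf{G}}$, and the commutation of $\sup$ with $\max$ and $\min$, together with the correspondence of Lemma \ref{lm6} between $\boldsymbol\partial\textbf{F}(\bar x)$ and pairs $(\underline{g},\overline{g})\in\partial\underline{F}(\bar x)\times\partial\overline{F}(\bar x)$, should reduce each endpoint to the corresponding quantity in Theorem \ref{thm4}. The main obstacle will be reconciling the min/max structure of the special product with that of the $gH$-directional derivative in the two possible orderings of $\underline{F}_\mathscr{D}(\bar x)(h)$ and $\overline{F}_\mathscr{D}(\bar x)(h)$; this requires a careful case analysis and a check that the interval components $[\underline{g}^{*}_i,\overline{g}^{*}_i]$ arising from Lemma \ref{lm6} are interpreted with the correct ordering so that $\widehat{\textbf{G}}^{*}$ genuinely lies in $I(\mathbb{R})^n$.
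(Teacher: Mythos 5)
Your proposal is correct, and its main argument takes a mildly but genuinely different route from the paper's. The paper's proof is essentially the alternative you sketch in your last paragraph: it uses Theorem \ref{thm4} to write $\textbf{F}_\mathscr{D}(\bar{x})(h)$ as the interval with endpoints $\min$ and $\max$ of $\psi^{*}_{\partial\underline{F}(\bar{x})}(h)$ and $\psi^{*}_{\partial\overline{F}(\bar{x})}(h)$ (via the classical support-function formula for directional derivatives of real convex functions), then, in two cases on the ordering of these endpoints, identifies the interval of suprema with the supremum of intervals using Remark \ref{nt5} and converts the supremum over pairs $(\underline{g},\overline{g})\in\partial\underline{F}(\bar{x})\times\partial\overline{F}(\bar{x})$ into $\boldsymbol\psi^{*}_{\boldsymbol\partial\textbf{F}(\bar{x})}(h)$ through the correspondence of Lemma \ref{lm6}. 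Your primary argument instead splits the equality into two inequalities: the direction $\boldsymbol\psi^{*}_{\boldsymbol\partial\textbf{F}(\bar{x})}(h)\preceq\textbf{F}_\mathscr{D}(\bar{x})(h)$ follows at once from Lemma \ref{lm7} and the least-upper-bound property of Definition \ref{12} --- a shortcut the paper does not exploit --- while the reverse direction exhibits a single maximizing element $\widehat{\textbf{G}}^{*}$ assembled from attaining subgradients $\underline{g}^{*},\overline{g}^{*}$. This buys two things: the Lemma \ref{lm6} correspondence is invoked only for one concrete pair rather than for the whole families, and you avoid the paper's somewhat delicate interchange of $\sup$ with interval formation over a product of index sets; the price is that you need attainment in the classical max formula, which does hold here since $\bar{x}\in\inte(\dom(\textbf{F}))$ makes $\partial\underline{F}(\bar{x})$ and $\partial\overline{F}(\bar{x})$ nonempty and compact. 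The subtlety you flag at the end --- that $\widehat{\textbf{G}}^{*}$ is an element of $I(\mathbb{R})^{n}$ only if $\underline{g}^{*}_{i}\leq\overline{g}^{*}_{i}$ for each $i$, which Lemma \ref{lm6} never verifies --- is real, but it is inherited from the paper (whose proof of this theorem relies on Lemma \ref{lm6} in exactly the same way) rather than introduced by your argument.
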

	
	\begin{proof}
	Note that $\underline{F}(x)~\text{and}~\overline{F}(x)$ are proper convex, and therefore  $\partial\underline{F}(\bar x)$ and $\partial\overline{F}(\bar x)$ are nonempty. Let $\underline{g}=(\underline{g}_{1},\underline{g}_{2},\ldots,\underline{g}_{n})\in\partial\underline{F}(\bar x)$ and $\overline{g}=(\overline{g}_{1},\overline{g}_{2},\ldots,\overline{g}_{n})\in\partial\overline{F}(\bar{x})$ for $\bar{x}\in\inte(\dom(\textbf{F}))$. By the property of real-valued convex functions (see \cite{dhara2011optimality}), we have
	\[
	\underline{F}_{\mathscr{D}}(\bar x)(h)=\psi^{*}_{\partial\underline{F}(\bar{x})}(h) \text{ and } \overline{F}_{\mathscr{D}}(\bar{x})(h)=\psi^{*}_{\partial\overline{F}(\bar{x})}(h) \text{ for all } h\in \mathbb{R}^{n} \text{ such that } \bar{x}+h\in X.
	\]
	Due to Theorem \ref{thm4}, we get
	\begin{align}\label{eq56}
		\textbf{F}_\mathscr{D}(\bar{x})(h)=&\Big[\min\Big\{\underline{F}_{\mathscr{D}}(\bar{x})(h),\overline{F}_{\mathscr{D}}(\bar{x})(h)\Big\}, \max\Big\{\underline{F}_{\mathscr{D}}(\bar{x})(h),\overline{F}_{\mathscr{D}}(\bar{x})(h)\Big\}\Big]\nonumber\\=&\Big[\min\Big\{\psi^{*}_{\partial\underline{F}(\bar{x})}(h),\psi^{*}_{\partial\overline{F}(\bar{x})}(h)\Big\}, \max\Big\{\psi^{*}_{\partial\underline{F}(\bar{x})}(h),\psi^{*}_{\partial\overline{F}(\bar{x})}(h)\Big\}\Big].
	\end{align}
	We now consider the following two possible cases.
	\begin{enumerate}[$\bullet$ \text{Case} 1.]
	    \item\label{kg6}  Let $\psi^{*}_{\partial\underline{F}(\bar{x})}(h)\leq\psi^{*}_{\partial\overline{F}(\bar{x})}(h)$. 
	    In this case, by (\ref{eq56}), we get 
	    \begin{align}\label{eq57}
	       	\textbf{F}_\mathscr{D}(\bar{x})(h) & = \Big[\psi^{*}_{\partial\underline{F}(\bar{x})}(h),\psi^{*}_{\partial\overline{F}(\bar{x})}(h)\Big] 
	       	= \left[\sup_{\underline{g}\in\partial\underline{F}(\bar{x})} h^{\top}\odot\underline{g}, \sup_{\overline{g}\in\partial\overline{F}(\bar{x})} h^{\top}\odot \overline{g}\right] \nonumber\\ & = 
	       \sup_{\underline{g}\in\partial\underline{F},~\overline{g}\in\partial\overline{F}(\bar{x})}\Big[h^{\top}\odot\underline{g}, h^{\top}\odot \overline{g}\Big] = 	         \sup_{\underline{g}\in\partial\underline{F},~\overline{g}\in\partial\overline{F}(\bar{x})}\left[\sum\limits_{i=1}^{n} h_{i}\underline{g}_{i}, \sum\limits_{i=1}^{n} h_{i}\overline{g}_{i}\right].
	         \end{align}
 We have seen in Lemma \ref{lm6} that corresponding to  every $\underline{g}\in\partial\underline{F}(\bar{x})$ and   $\overline{g}\in\partial\overline{F}(\bar{x})$, we get $\widehat{\textbf{G}}\in\boldsymbol \partial \textbf{F}(\bar{x})$ and vice-versa. Thus, for  $\underline{g}=(\underline{g}_{1},\underline{g}_{2},\ldots,\underline{g}_{n})\in\partial\underline{F}(\bar x)$ and $\overline{g}=(\overline{g}_{1},\overline{g}_{2},\ldots,\overline{g}_{n})\in\partial\overline{F}(\bar{x})$,  by  (\ref{eq57}), we obtain
\begin{align}
    	\textbf{F}_\mathscr{D}(\bar{x})(h)=&\sup_{\widehat{\textbf{G}}\in\boldsymbol \partial \textbf{F}(\bar{x})} h^{\top}\odot\widehat{\textbf{G}}, \text{ where } \widehat{\textbf{G}}=(\textbf{G}_{1},\textbf{G}_{2},\ldots,\textbf{G}_{n})\text{ with } \textbf{G}_{i}=[\underline{g}_{i},\overline{g}_{i}]\nonumber\\=&\boldsymbol\psi^{*}_{\boldsymbol \partial \textbf{F}(\bar{x})}(h)\nonumber.
    	\end{align}
  \item\label{kg7}  Let $\psi^{*}_{\partial\overline{F}(\bar{x})}(h)\leq\psi^{*}_{\partial\underline{F}(\bar{x})}(h)$.
   Proof contains similar steps as in Case \ref{kg6}.
  \end{enumerate}
  Hence, from Case \ref{kg6}  and Case \ref{kg7}, we have 
  	\[
	\textbf{F}_\mathscr{D}(\bar{x})(h)=\boldsymbol\psi^{*}_{\boldsymbol \partial \textbf{F}(\bar{x})}(h) \text{ for all } h\in\mathbb{R}^{n} \text{ such that }\bar{x}+h\in X.
		\]
    \end{proof}

\begin{theorem}\label{thm6}
    Let $\textbf{F}: X\rightarrow \overline{I({\mathbb{R}})}$ be a proper convex IVF and $\bar{x}\in\inte(\dom(\textbf{F}))$. Then, the $gH$-subdifferential set of $\textbf{F}$ at $\bar{x}$ is bounded.
\end{theorem}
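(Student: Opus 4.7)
The plan is to combine Theorem \ref{17} with Lemma \ref{lm11}. By Theorem \ref{17}, for every $h\in\mathbb{R}^n$ with $\bar{x}+h\in X$, we have
\[
\boldsymbol\psi^{*}_{\boldsymbol\partial\textbf{F}(\bar{x})}(h)=\textbf{F}_\mathscr{D}(\bar{x})(h),
\]
so it suffices to show that the right-hand side is a finite interval for every $h\in\mathbb{R}^n$; Lemma \ref{lm11} will then force $\boldsymbol\partial\textbf{F}(\bar{x})$ to be bounded.

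First I would reduce the question to the componentwise level. Write $\textbf{F}=[\underline{F},\overline{F}]$; by Lemma \ref{lm2} both $\underline{F}$ and $\overline{F}$ are proper convex real-valued functions. Since $\bar{x}\in\inte(\dom(\textbf{F}))$, there is $\delta>0$ with $\bar{x}+\lambda h\in\dom(\textbf{F})$ for every $h\in\mathbb{R}^n$ and every $\lambda\in(-\delta/\lVert h\rVert,\,\delta/\lVert h\rVert)$. A standard fact from convex analysis then guarantees that the (one-sided) directional derivatives $\underline{F}_\mathscr{D}(\bar{x})(h)$ and $\overline{F}_\mathscr{D}(\bar{x})(h)$ exist and are finite real numbers for every direction $h$. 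Invoking Theorem \ref{thm4}, we obtain
\[
\textbf{F}_\mathscr{D}(\bar{x})(h)=\Big[\min\{\underline{F}_{\mathscr{D}}(\bar{x})(h),\overline{F}_{\mathscr{D}}(\bar{x})(h)\},\,\max\{\underline{F}_{\mathscr{D}}(\bar{x})(h),\overline{F}_{\mathscr{D}}(\bar{x})(h)\}\Big],
\]
which is a bounded closed interval. Thus $\boldsymbol\psi^{*}_{\boldsymbol\partial\textbf{F}(\bar{x})}(h)\in I(\mathbb{R})$ for every $h\in\mathbb{R}^n$, i.e.\ the support function is finite everywhere. Finally, an application of Lemma \ref{lm11} yields that $\boldsymbol\partial\textbf{F}(\bar{x})$ is bounded.

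The only step that is not completely routine is the finiteness of the scalar directional derivatives, and this is precisely where the hypothesis $\bar{x}\in\inte(\dom(\textbf{F}))$ is indispensable: without it, the difference quotient defining $\underline{F}_\mathscr{D}(\bar{x})(h)$ or $\overline{F}_\mathscr{D}(\bar{x})(h)$ could blow up to $+\infty$ in directions pointing out of the domain, making $\textbf{F}_\mathscr{D}(\bar{x})(h)$ and hence the support function infinite somewhere, and Lemma \ref{lm11} would no longer deliver boundedness. Everything else is bookkeeping through the already-established identifications between $gH$-objects and their endpoint analogues.
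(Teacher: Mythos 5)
Your proposal is correct and follows essentially the same route as the paper: finiteness of $\textbf{F}_\mathscr{D}(\bar{x})(\cdot)$ at an interior point, the identity $\textbf{F}_\mathscr{D}(\bar{x})(h)=\boldsymbol\psi^{*}_{\boldsymbol\partial\textbf{F}(\bar{x})}(h)$ from Theorem \ref{17}, and then Lemma \ref{lm11}. The only difference is that you justify the finiteness of the directional derivative explicitly via the endpoint functions, a detail the paper leaves implicit in its appeal to Theorem \ref{thm4}.
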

\begin{proof}
Note that  by Theorem \ref{thm4}, for $\bar{x}\in\inte(\dom(\textbf{F}))$, the directional derivative of $\textbf{F}$ at $\bar{x}$ exists everywhere. Thus, for all $h\in\mathbb{R}^{n}$ such that $\bar{x}+h\in X$, we have
\begin{eqnarray*}
&&\textbf{F}_\mathscr{D}(\bar{x})(h) \text{ is finite }\\
&\implies& \boldsymbol\psi^{*}_{\boldsymbol\partial\textbf{F}(\bar{x})}(h) ~\text{is finite} \text{ by Theorem } \ref{17}\\
&\implies& \boldsymbol\partial\textbf{F}(\bar{x}) \text{ is bounded by Lemma }\ref{lm11}.
\end{eqnarray*}
Hence, the $gH$-subdifferential set of $\textbf{F}$ at $\bar{x}\in\inte(\dom(\textbf{F}))$ is bounded, i.e., for every $\widehat{\textbf{G}}\in\boldsymbol\partial\textbf{F}(\bar x)$, there exists an $M>0$ such that $\lVert \widehat {\textbf{G}} \rVert_{I(\mathbb{R})^{n}}\leq M$.
\end{proof}
   \begin{lemma}\label{lm10}
        Let \textbf{F} be an IVF on a nonempty set $X\subseteq\mathbb{R}^{n}$ such that 
        \[
        \textbf{F}(x)\ominus_{gH} \textbf{F}(y)\preceq {c}\lVert x-y \rVert \text{ for all } x,y\in X,
        \]
        where $c\in\mathbb{R}$. Then, 
        \[
        \lVert \textbf{F}(x)\ominus_{gH}\textbf{F}(y) \rVert_{I(\mathbb{R})}\leq c\lVert x-y\rVert \text{ for all } x,y\in X.
        \]
\end{lemma}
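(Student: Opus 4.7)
The plan is to reduce the interval inequality to its two scalar endpoints and then exploit the symmetry of the hypothesis under the swap $x \leftrightarrow y$. Write $\textbf{F}(x) = [\underline{F}(x), \overline{F}(x)]$ and $\textbf{F}(y) = [\underline{F}(y), \overline{F}(y)]$. By the definition of the $gH$-difference,
\[
\textbf{F}(x) \ominus_{gH} \textbf{F}(y) = \bigl[\min\{\alpha,\beta\},\,\max\{\alpha,\beta\}\bigr],
\]
where $\alpha = \underline{F}(x) - \underline{F}(y)$ and $\beta = \overline{F}(x) - \overline{F}(y)$.

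Next, I would unfold the hypothesis $\textbf{F}(x) \ominus_{gH} \textbf{F}(y) \preceq c\lVert x - y\rVert = [c\lVert x - y\rVert, c\lVert x - y\rVert]$ via Definition \ref{g99}. Comparing the upper endpoints gives $\max\{\alpha,\beta\} \leq c\lVert x - y\rVert$, which immediately implies both $\alpha \leq c\lVert x - y\rVert$ and $\beta \leq c\lVert x - y\rVert$. Now the key observation: the hypothesis holds for \emph{all} pairs in $X$, so swapping the roles of $x$ and $y$ and using $\lVert x - y\rVert = \lVert y - x\rVert$ gives the reverse bounds $-\alpha \leq c\lVert x - y\rVert$ and $-\beta \leq c\lVert x - y\rVert$. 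Combining, I obtain
\[
\lvert \underline{F}(x) - \underline{F}(y)\rvert \leq c\lVert x - y\rVert \quad \text{and} \quad \lvert \overline{F}(x) - \overline{F}(y)\rvert \leq c\lVert x - y\rVert.
\]

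Finally, I would apply the definition of the interval norm, $\lVert \textbf{A}\rVert_{I(\mathbb{R})} = \max\{\lvert \underline{a}\rvert, \lvert \overline{a}\rvert\}$, to $\textbf{F}(x) \ominus_{gH} \textbf{F}(y) = [\min\{\alpha,\beta\}, \max\{\alpha,\beta\}]$. Since $\lvert \min\{\alpha,\beta\}\rvert$ and $\lvert \max\{\alpha,\beta\}\rvert$ are each bounded above by $\max\{\lvert \alpha\rvert, \lvert \beta\rvert\}$, which is at most $c\lVert x - y\rVert$ by the two endpoint bounds just established, the conclusion $\lVert \textbf{F}(x) \ominus_{gH} \textbf{F}(y)\rVert_{I(\mathbb{R})} \leq c\lVert x - y\rVert$ follows.

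There is no real obstacle here; the only subtlety is noticing that the one-sided dominance hypothesis becomes a two-sided (absolute-value) estimate on the endpoints once the inequality is applied to both orderings $(x,y)$ and $(y,x)$, which is what lets the norm $\max\{|\cdot|,|\cdot|\}$ be controlled. Implicitly $c \geq 0$ is needed (otherwise the conclusion is vacuous or the hypothesis forces $\textbf{F}$ to be constant); this will either be assumed or will drop out of the case $x = y$.
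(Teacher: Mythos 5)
Your proposal is correct and follows essentially the same route as the paper's proof: extract the endpoint inequalities from the dominance hypothesis, swap $x$ and $y$ to get the reverse bounds, combine into absolute-value estimates on $\underline{F}$ and $\overline{F}$, and conclude via the definition of the interval norm. Your version is slightly more explicit about the $\min/\max$ structure of the $gH$-difference, and your remark that $c\geq 0$ is implicitly needed (it follows from taking $x=y$) is a fair observation the paper leaves unstated.
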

\begin{proof}
We have $\textbf{F}(x)\ominus_{gH} \textbf{F}(y)\preceq c\lVert x-y \rVert \text{ for all } x,y\in X,$ which implies that
\begin{eqnarray}\label{eq31}
&&\underline{F}(x)-\underline{F}(y)\leq c\lVert x-y\rVert \text{ and } \overline{F}(x)-\overline{F}(y)\leq c\lVert x-y\rVert. 
\end{eqnarray}
Interchanging $x$ and $y$ in (\ref{eq31}), we obtain
\begin{eqnarray}\label{eq32}
&&\underline{F}(y)-\underline{F}(x)\leq c\lVert x-y\rVert \text{ and } \overline{F}(y)-\overline{F}(x)\leq c\lVert x-y\rVert. 
\end{eqnarray}
With the help of  (\ref{eq31}) and (\ref{eq32}), we get
\[
\lvert \underline{F}(x)-\underline{F}(y)\rvert \leq c\lVert x-y\rVert \text{ and } \lvert \overline{F}(x)-\overline{F}(y)\rvert \leq c\lVert x-y\rVert \text{ for all } x,y\in X,
\]
which implies
\[
        \lVert \textbf{F}(x)\ominus_{gH}\textbf{F}(y) \rVert_{I(\mathbb{R})}\leq c\lVert x-y\rVert \text{ for all } x,y\in X.
        \]

\end{proof}
\begin{theorem}
    Let $X$ be a nonempty convex subset of $\mathbb{R}^{n}$ and $\textbf{F}$ be a convex IVF on $X$  such that $\textbf{F}$ has $gH$-subgradient at every ${x}\in X$. Then, $\textbf{F}$ is $gH$-Lipschitz continuous on $X$. 
\end{theorem}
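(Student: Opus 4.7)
The plan is to combine the $gH$-subgradient inequality with the scalar estimate of Lemma \ref{lm9} and the boundedness of the $gH$-subdifferential from Theorem \ref{thm6}, and then convert the resulting one-sided domination into a norm bound via Lemma \ref{lm10}.

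First I would fix arbitrary $x, y \in X$ and, using the hypothesis, pick a $gH$-subgradient $\widehat{\textbf{G}}_x \in \boldsymbol{\partial}\textbf{F}(x)$. By Definition \ref{dd5}, this yields
\[
(y - x)^{\top}\odot\widehat{\textbf{G}}_x \preceq \textbf{F}(y)\ominus_{gH}\textbf{F}(x).
\]
Next, using that $\textbf{B}\ominus_{gH}\textbf{A} = (-1)\odot(\textbf{A}\ominus_{gH}\textbf{B})$ and that the relation $\preceq$ reverses under multiplication by $-1$, this rewrites as
\[
\textbf{F}(x)\ominus_{gH}\textbf{F}(y) \preceq (x-y)^{\top}\odot\widehat{\textbf{G}}_x.
\]

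Second, I would estimate the right-hand side in terms of the norms. Applying Lemma \ref{lm9} gives
\[
(x-y)^{\top}\odot\widehat{\textbf{G}}_x \preceq \lVert x-y\rVert\odot\bigl[\lVert\widehat{\textbf{G}}_x\rVert_{I(\mathbb{R})^n},\,\lVert\widehat{\textbf{G}}_x\rVert_{I(\mathbb{R})^n}\bigr].
\]
By Theorem \ref{thm6}, the $gH$-subdifferential is bounded at every point of $X$, so there exists a constant $M > 0$ with $\lVert \widehat{\textbf{G}}_x\rVert_{I(\mathbb{R})^n} \leq M$. Combining the two displays produces $\textbf{F}(x)\ominus_{gH}\textbf{F}(y) \preceq M\lVert x-y\rVert$, which is precisely the hypothesis of Lemma \ref{lm10} (with $c = M\lVert x-y\rVert / \lVert x-y\rVert = M$ up to a trivial rescaling). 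An application of Lemma \ref{lm10} then yields
\[
\lVert \textbf{F}(x)\ominus_{gH}\textbf{F}(y)\rVert_{I(\mathbb{R})} \leq M\lVert x-y\rVert,
\]
establishing $gH$-Lipschitz continuity in the sense of Definition \ref{dd12}.

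The main obstacle in this plan is the passage from the pointwise boundedness delivered by Theorem \ref{thm6} (which only asserts $\lVert\widehat{\textbf{G}}_x\rVert_{I(\mathbb{R})^n} \leq M_x$ with a constant possibly depending on $x \in \inte(\dom(\textbf{F}))$) to a single uniform constant $M$ valid for every $x \in X$. This is the only nontrivial step; selecting the $gH$-subgradient $\widehat{\textbf{G}}_x$ and chaining the three lemmas is routine. I would carry this step out by invoking the standing assumption that $X$ lies in the interior of the effective domain and exploiting the convexity of $\textbf{F}$ together with the scalar Lemma \ref{lm2} characterization of convexity to reduce the uniform bound to the classical fact that the subdifferentials of the real-valued convex endpoint functions $\underline{F}$ and $\overline{F}$ are uniformly bounded on $X$, after which the above chain of inequalities delivers the desired Lipschitz estimate.
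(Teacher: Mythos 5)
Your proof follows essentially the same route as the paper's: take a $gH$-subgradient at $x$, reverse the domination by multiplying by $-1$, bound the special product via Lemma \ref{lm9}, invoke Theorem \ref{thm6} for boundedness of the subdifferential, and convert to a norm estimate with Lemma \ref{lm10}. The uniformity issue you flag is real --- Theorem \ref{thm6} only delivers a constant depending on the point, and the paper's own proof silently treats it as uniform --- so your explicit attention to that step is, if anything, more careful than the original, even though the repair you sketch (uniform boundedness of the endpoint subdifferentials on all of $X$) would itself need an extra hypothesis to go through.
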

\begin{proof}
Since $\textbf{F}$ has $gH$-subdradient at every $x\in X$, then there exists a $\widehat{\textbf{G}}\in I(\mathbb{R})^{n}$ such that 
\begin{eqnarray*}
&&(y-{x})^{\top}\odot\widehat{\textbf{G}}\preceq\textbf{F}(y)\ominus_{gH}\textbf{F}({x})\text{ for all } y\in X\\
&\implies& (-1)\odot\left((x-y)^{\top}\odot\widehat{\textbf{G}}\right)\preceq \textbf{F}(y)\ominus_{gH} \textbf{F}(x)\\
&\implies& \textbf{F}(x) \ominus_{gH} \textbf{F}(y) \preceq (x-y)^{\top}\odot\widehat{\textbf{G}}\\
&\implies& \textbf{F}(x) \ominus_{gH} \textbf{F}(y) \preceq\lVert x-y \rVert\odot\left[\lVert \widehat {\textbf{G}} \rVert_{I(\mathbb{R})^{n}}, \lVert \widehat {\textbf{G}} \rVert_{I(\mathbb{R})^{n}} \right] \text{ by Lemma } \ref{lm9}\\
&\implies&  \lVert \textbf{F}(x) \ominus_{gH} \textbf{F}(y)\rVert_{I(\mathbb{R})} \leq \lVert \widehat {\textbf{G}} \rVert_{I(\mathbb{R})^{n}}\lVert x-y \rVert \text{ by  Lemma } \ref{lm10}\\
&\implies& \lVert \textbf{F}(x) \ominus_{gH} \textbf{F}(y)\rVert_{I(\mathbb{R})} \leq M \lVert x-y \rVert, \text{ where } \lVert \widehat {\textbf{G}} \rVert_{I(\mathbb{R})^{n}}\leq M \text{ by Theorem \ref{thm6}}.
\end{eqnarray*}
Thus, $\textbf{F}$ is $gH$-Lipschitz continuous on X.
\end{proof}

\section{Weak sharp minima and its characterizations}\label{Sec4}
	In this section, we present the main results\textemdash primal and dual characterizations of WSM for a $gH$-lsc and convex IVF.



\begin{definition}{(\emph{WSM for an IVF})}\label{k4}. Let $\textbf{F}$ : ${\mathbb{R}}^n \rightarrow {\overline{I(\mathbb{R})}}$ be a $gH$-lsc and convex IVF. Let $\bar{S}$ and $S$ be two nonempty closed convex sets such that $\bar{S}\subseteq S\subseteq {\mathbb{R}}^n$.  Further, let $\text{dom}(\textbf{F}) \cap S \neq\emptyset$.
Then, the set $\bar{S}$ is said to be a set of WSM of $\textbf{F}$ over the set $S$ with modulus $\alpha>0$ if 
\begin{equation*}\label{eq1}
    \textbf{F}(\overline{x})\oplus\alpha \dis(x,\bar{S})\preceq\textbf{F}(x)\text{ for all}~ \overline{x}\in \bar{S}\text{ and}~ x\in S.
\end{equation*}
\end{definition}	
\begin{remark}\label{nt7}
Let $\textbf{F}$ : ${\mathbb{R}}^n \rightarrow {\overline{I(\mathbb{R})}}$ be a $gH$-lsc and convex IVF with $\textbf{F}(x)=[\underline{F}(x),\overline{F}(x)]$ for all $x\in\mathbb{R}^{n}$, where $\underline{F}, \overline{F} : {\mathbb{R}}^n\rightarrow\overline{\mathbb{R}}$ be two extended real-valued functions.  Then, $\bar{S}$ is a set of WSM of $\textbf{F}$ over $S$ with modulus $\alpha>0$ if and only if  $\bar{S}$ is a set of WSM of $\underline{F}$ and $\overline{F}$ over $S$ with modulus $\alpha>0$.
The reason is as follows.
By Remark \ref{rm1} and Lemma \ref{lm2}, it is easy to see that the functions $\underline{F}$ and $\overline{F}$ are lsc and convex. Let  $\bar{S}$ be a set of WSM of \textbf{F} over $S$ with modulus $\alpha>0$. Then, 
\begin{eqnarray*}
&&\textbf{F}(\bar{x})\oplus\alpha\dis(x,\bar{S})\preceq\textbf{F}(x)~\text{for all}~ \bar{x}\in \bar{S}\text{ and}~ x\in S\\&\iff&
[\underline{F}(\bar{x})+\alpha\dis(x,\bar{S}),\overline{F}(\bar{x})+\alpha\dis(x,\bar{S})]\preceq [\underline{F}(x),\overline{F}(x)]~\text{for all}~ \bar{x}\in \bar{S}\text{ and}~ x\in S\\&\iff&
\underline{F}(\bar{x})+\alpha\dis(x,\bar{S})\leq \underline{F}(x) \text{ and } \overline{F}(\bar{x})+\alpha\dis(x,\bar{S})\leq\overline{F}(x)~\text{for all}~ \bar{x}\in \bar{S}~\text{and}~ x\in S\\&\iff&
\bar{S} \text{ is a set of WSM of both }
\underline{F} \text{ and } \overline{F}    \text{ over } S  \text{ with modulus }\alpha>0.  
\end{eqnarray*} 
\end{remark}
\begin{example}
Let $\textbf{F}:\mathbb{R}^2\rightarrow {\overline{I(\mathbb{R})}}$ be an IVF defined by 
$$\textbf{F}(x)=[{5-x_{1}x_{2}-x_{1}},  {10-x^{2}_{1}}x_{2}-x^{2}_{2}x_{1}].$$
Let $S=[-a,0]\times[-a,0]\subseteq\mathbb{R}^2$ and $\bar{S}=\{0\}\times[-a,0]$, where $a>0$. Thus,  $\bar{S}\subseteq S\subseteq\mathbb{R}^{n}$. 
Clearly, the functions $\underline{F}$ and $\overline{F}$ are  ${5-x_{1}x_{2}-x_{1}}$ and ${10-x^{2}_{1}x_{2}-x^{2}_{2}x_{1}}$, respectively. Note that for any $\alpha>0$,
\[
\underline{F}(\bar{x})+\alpha\dis(x,\bar{S})\leq \underline{F}(x) \text{ and } \overline{F}(\bar{x})+\alpha\dis(x,\bar{S})\leq\overline{F}(x)~\text{for all}~ \bar{x}\in \bar{S}~\text{and}~ x\in S.
\]
Thus, $\bar{S}=\{0\}\times[-a,0]$ is a set of WSM of both $\underline{F}$ and $\overline{F}$ over $S$ with modulus $\alpha$, for any $\alpha>0$. Therefore, by Remark \ref{nt7},  $\bar{S}$ is a set of WSM  of $\textbf{F}$ over $S$ with modulus $\alpha>0$.
\end{example}
		    \begin{theorem}\emph{(Primal characterization)}.\label{k3}
		    Let $\textbf{F},~S$, and $\bar{S}$ be as in Definition $\ref{k4}$. Further, define an IVF $\textbf{F}_{o}:{\mathbb{R}}^n\rightarrow {\overline{I(\mathbb{R})}}$ by 
		   \begin{equation*}
		       \textbf{F}_{o}=\begin{cases}
		\textbf{F}(x), & \text{if } x \in S, \\
		+\infty, & \text{otherwise}.
		\end{cases}
		   \end{equation*} 
		   Then, the set $\bar{S}$ is a set of \textit{WSM} of $\textbf{F}$ over the set $S$ with modulus $\alpha>0$ if and only if 
		    \begin{equation}\label{eq8}
		        \alpha \dis(d,T_{\bar{S}}(x))\preceq\textbf{F}_{o\mathscr{D}}(x)(d)~\text{for all } x\in\bar{S}~ \text{and}~ d\in{\mathbb{R}}^n.
		    \end{equation}
		    \end{theorem}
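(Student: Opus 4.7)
The plan is to work directly with the extended IVF $\textbf{F}_o$, so the WSM inequality holds for every $x\in\mathbb{R}^n$ (automatically whenever $\textbf{F}_o(x)=+\infty$), and then to exploit two ingredients repeatedly: Lemma \ref{aa}\ref{dst4} — that the directional derivative of $\dis(\cdot,\bar S)$ at $x\in\bar S$ equals $\dis(d,T_{\bar S}(x))$, and in particular equals $\lVert d\rVert$ whenever $d\in N_{\bar S}(x)$ — together with the convex subgradient-type inequality of Lemma \ref{lm3}, $\textbf{F}_{o\mathscr D}(x)(y-x)\preceq\textbf{F}_o(y)\ominus_{gH}\textbf{F}_o(x)$.

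For the forward direction, I will fix $\bar x\in\bar S$ and $d\in\mathbb{R}^n$ and substitute $x=\bar x+\lambda d$ for $\lambda>0$ in the WSM inequality. Using Lemma \ref{lm4}\ref{lm41} (equivalently, the definition of $\ominus_{gH}$) this rearranges to
\[
\alpha\,\dis(\bar x+\lambda d,\bar S)\preceq\textbf{F}_o(\bar x+\lambda d)\ominus_{gH}\textbf{F}_o(\bar x).
\]
Dividing by $\lambda$ and letting $\lambda\downarrow 0$ sends the left side to $\alpha\,\dis(d,T_{\bar S}(\bar x))$ by Lemma \ref{aa}\ref{dst4} applied to $\rho(\cdot)=\dis(\cdot,\bar S)$ at $\bar x$, and the right side to $\textbf{F}_{o\mathscr D}(\bar x)(d)$ by Definition \ref{d1}, which is exactly (\ref{eq8}).

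For the converse, given $y\in S$ I will take the projection $\bar x_0=P(y\mid\bar S)$, so that $y-\bar x_0\in N_{\bar S}(\bar x_0)$; the ``moreover'' clause of Lemma \ref{aa}\ref{dst4} then gives $\dis(y-\bar x_0,T_{\bar S}(\bar x_0))=\lVert y-\bar x_0\rVert=\dis(y,\bar S)$. Plugging $d=y-\bar x_0$ into (\ref{eq8}) and chaining with Lemma \ref{lm3} applied to the convex $\textbf{F}_o$ at $\bar x_0$ yields
\[
\alpha\,\dis(y,\bar S)\preceq\textbf{F}_{o\mathscr D}(\bar x_0)(y-\bar x_0)\preceq\textbf{F}_o(y)\ominus_{gH}\textbf{F}_o(\bar x_0),
\]
which Lemma \ref{lm4}\ref{lm41} converts to $\textbf{F}(\bar x_0)\oplus\alpha\,\dis(y,\bar S)\preceq\textbf{F}(y)$. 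To finish, I must upgrade $\bar x_0$ to an arbitrary $\bar x\in\bar S$, which amounts to showing $\textbf{F}$ is constant on $\bar S$: for any $\bar x_1,\bar x_2\in\bar S$, convexity of $\bar S$ places $\bar x_2-\bar x_1\in T_{\bar S}(\bar x_1)$, so $\dis(\bar x_2-\bar x_1,T_{\bar S}(\bar x_1))=0$; applying (\ref{eq8}) and Lemma \ref{lm3} gives $\textbf{F}(\bar x_1)\preceq\textbf{F}(\bar x_2)$, and symmetrizing forces equality.

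The main obstacle I anticipate is not conceptual but technical: I need to justify passing to the limit cleanly in the forward direction when $\bar x+\lambda d\notin S$ (the right-hand side is then $+\infty$ and the inequality is automatic, but this case must be written out explicitly), and I need a brief direct verification from the definition of $\ominus_{gH}$ that $\textbf{C}\oplus[r,r]\preceq\textbf{B}$ actually implies the converse $[r,r]\preceq\textbf{B}\ominus_{gH}\textbf{C}$ used above, since Lemma \ref{lm4}\ref{lm41} states only the other direction.
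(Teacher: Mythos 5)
Your proof is correct. The forward direction is essentially the paper's own argument: substitute $x+\lambda d$ into the WSM inequality, rearrange via the $gH$-difference, divide by $\lambda$, and identify the two limits using Lemma \ref{aa}(\ref{dst4}) and Definition \ref{d1}; your two flagged technicalities (the case $\bar x+\lambda d\notin S$, and the converse implication to Lemma \ref{lm4}(\ref{lm41}), namely that $\textbf{C}\oplus[r,r]\preceq\textbf{B}$ gives $[r,r]\preceq\textbf{B}\ominus_{gH}\textbf{C}$) are real but routine, and in fact the paper uses the latter implication silently as well. Your converse, however, takes a genuinely different route. The paper fixes an arbitrary $x\in\bar S$, chains Lemma \ref{lm3} with (\ref{eq8}) to get $\textbf{F}_{o}(x)\oplus\alpha\dis(y,x+T_{\bar S}(x))\preceq\textbf{F}_{o}(y)$, and then passes to the supremum over $x\in\bar S$ using the identity $\dis(y,\bar S)=\sup_{x\in\bar S}\dis(y,x+T_{\bar S}(x))$ of Lemma \ref{aa}(\ref{dst3}); you instead evaluate only at the projection $\bar x_0=P(y\mid\bar S)$, use $y-\bar x_0\in N_{\bar S}(\bar x_0)$ and the ``moreover'' clause of Lemma \ref{aa}(\ref{dst4}) to get $\dis(y-\bar x_0,T_{\bar S}(\bar x_0))=\dis(y,\bar S)$ directly, and then upgrade to arbitrary $\bar x\in\bar S$ by proving $\textbf{F}$ is constant on $\bar S$ (via $\bar x_2-\bar x_1\in T_{\bar S}(\bar x_1)$, (\ref{eq8}), Lemma \ref{lm3}, and symmetrization). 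Your version costs one extra short lemma but is arguably tighter: the paper's supremum step leaves the $x$-dependence of the term $\textbf{F}_{o}(x)$ entangled with the $x$ being maximized over, and it is precisely the constancy of $\textbf{F}$ on $\bar S$ --- which you prove explicitly --- that legitimizes that step. Your argument also mirrors the technique the paper itself uses later in the implication (\ref{st5})$\implies$(\ref{st1}) of Theorem \ref{th2}, so it fits naturally into the framework.
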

		    \begin{proof} 
		    Suppose $\bar{S}$ is a set of WSM of $\textbf{F}$ over $S$ with modulus $\alpha>0$. Then, by Definition \ref{k4}, for any $x\in\bar{S},~d\in {\mathbb{R}}^n,$ and $t>0$, we have
		    \begin{eqnarray*}
		    &&\textbf{F}_{o}(x)\oplus\alpha \dis(x+td,\bar{S})\preceq\textbf{F}_{o}(x+td)\\&\implies&\alpha \dis(x+td,\bar{S})\preceq
		    \textbf{F}_{o}(x+td)\ominus_{gH}\textbf{F}_{o}(x)\\&\implies&\frac{\alpha}{t}\left( {\dis(x+td,\bar{S})-\dis(x,\bar{S})}\right)\preceq\frac{1}{t}\odot\left({\textbf{F}_{o}(x+td)\ominus_{gH}\textbf{F}_{o}(x)}\right)\\&\implies&\lim_{t\rightarrow 0}\frac{\alpha}{t}\left( {\dis(x+td,\bar{S})-\dis(x,\bar{S})}\right)\preceq\lim_{t\rightarrow 0}\frac{1}{t}\odot\left({\textbf{F}_{o}(x+td)\ominus_{gH}\textbf{F}_{o}(x)}\right)\\&\implies&\alpha\lim_{t\rightarrow 0}\frac{1}{t}\left( {\dis(x+td,\bar{S})-\dis(x,\bar{S})}\right)\preceq\textbf{F}_{o\mathscr{D}}(x)(d)~\text{by Definition \ref{d1}}\\&\implies&\alpha \dis(d,T_{\bar{S}}(x))\preceq\textbf{F}_{o\mathscr{D}}(x)(d)~\text{by part (\ref{dst4}) of Lemma \ref{aa}}.
		    \end{eqnarray*}
Thus,
		    \[ \alpha \dis(d,T_{\bar{S}}(x))\preceq\textbf{F}_{o\mathscr{D}}(x)(d) ~\text{for all } x\in\bar{S}~ \text{and}~ d\in{\mathbb{R}}^n.\]
 For the converse part, let $y\in S$ and $x\in \bar{S}$. Therefore, from Lemma \ref{lm3}, we get
 \begin{eqnarray}
 &&\textbf{F}_{o\mathscr{D}}({x})(y-x)\preceq\textbf{F}_{o}(y)\ominus_{gH}\textbf{F}_{o}(x)\nonumber\\&\implies&\textbf{F}_{o}(x)\oplus\textbf{F}_{o\mathscr{D}}({x})(y-x)\preceq \textbf{F}_{o}(y)\nonumber\\&\implies&\textbf{F}_{o}(x)\oplus\alpha \dis(y-x,T_{\bar{S}}(x))\preceq\textbf{F}_{o}(y)\text{ by (\ref{eq8})}\nonumber\\&\implies&\textbf{F}_{o}(x)\oplus\alpha \dis(y,x+T_{\bar{S}}(x))\preceq\textbf{F}_{o}(y)\nonumber\label{eq24}.
 \end{eqnarray}
Since $x\in\bar{S}$ is arbitrary, we have 
\begin{eqnarray*}
   && \textbf{F}_{o}(x)\oplus\alpha\sup_{x\in \bar{S}} \dis(y,x+T_{\bar{S}}(x))\preceq\textbf{F}_{o}(y)\\&\implies&\textbf{F}_{o}(x)\oplus\alpha \dis(y,\bar{S})\preceq\textbf{F}_{o}(y)~\text{for all}~ x\in\bar{S}~ \text{and}~\alpha>0~\text{by part (\ref{dst3})\text{ of Lemma} \ref{aa}}.
\end{eqnarray*}
Hence, $\bar{S}$ is the set of WSM of $\textbf{F}$ over $S$ with modulus $\alpha>0$, and the proof is complete.
\end{proof}
 \begin{theorem}\emph{(Dual characterizations)}.\label{th2}
  Let $\textbf{F},~S$, and $\bar{S}$ be as in Definition $\ref{k4}$. 
 Define an IVF $\textbf{F}_{o}:{\mathbb{R}}^n\rightarrow \overline{I(\mathbb{{R}})}$ by 
		   \begin{equation*}
		       \textbf{F}_{o}(x)=\begin{cases}
		\textbf{F}(x), & \text{if } x \in S, \\
		+\infty, & \text{otherwise}.
		\end{cases}
		   \end{equation*} Then, for any $\alpha>0$, the following statements are equivalent.
 \begin{enumerate}[\normalfont(a)]
     \item\label{st1} The set $\bar{S}$ is a set of WSM of \textbf{F} over the set $S$ with modulus $\alpha$.  
  \item\label{st2} The normal cone inclusion holds. That is,
\[
\alpha\mathbb{B}\cap N_{\bar{S}}(x)\subseteq\boldsymbol \partial \textbf{F}_{o}(x)  \text{ for all } x\in\bar{S}.
\]

  \item\label{st3} For all $x\in \bar{S}$ and $d\in T_{S}(x)$,
 \[
\alpha \dis(d,T_{\bar{S}}(x))\preceq \textbf{F}_{\mathscr{D}}(x)(d).
 \]
 \item\label{st4} The following inclusion holds, 
 \[
 \alpha\mathbb{B}\bigcap\left(\bigcup\limits_{x\in\bar{S}} N_{\bar{S}}(x)\right)\subseteq\bigcup\limits_{x\in\bar{S}}\boldsymbol \partial \textbf{F}_{o}(x).
\]
%
\item\label{st5} For all $x\in\bar{S}$ and $d\in T_{S}(x)\cap N_{\bar{S}}(x),$
\[
\alpha\lVert d\rVert\preceq\textbf{F}_{\mathscr{D}}(x)(d).
\]
\item\label{st7} For all $y\in S$, 
\[
\alpha \dis(y,\bar{S})\preceq\textbf{F}_{\mathscr{D}}(p)(y-p),
\]
where $p\in P(y~|~\bar{S})$.
\end{enumerate}
\end{theorem}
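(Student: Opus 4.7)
My plan is to prove the equivalence of the six statements by combining Theorem \ref{k3} (the primal characterization for $\textbf{F}_{o}$), the support-function identity for the distance to tangent cones in Lemma \ref{aa}(ii), the $gH$-subdifferential/directional-derivative duality in Theorem \ref{17}, and Lemma \ref{lm5} for converting support-function inequalities into set inclusions. Concretely, I would establish (a) $\Leftrightarrow$ (c) $\Leftrightarrow$ (b), the side chain (c) $\Rightarrow$ (e) $\Rightarrow$ (f) $\Rightarrow$ (a), and finally (b) $\Leftrightarrow$ (d).

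\emph{Step 1, (a) $\Leftrightarrow$ (c) $\Leftrightarrow$ (b).} Theorem \ref{k3} states that (a) is equivalent to $\alpha\,\dis(d,T_{\bar{S}}(x))\preceq\textbf{F}_{o\mathscr{D}}(x)(d)$ for every $x\in\bar{S}$ and every $d\in\mathbb{R}^{n}$. For $d\in T_{S}(x)$, convexity of $S$ gives $x+\lambda d\in S$ for small $\lambda>0$, so $\textbf{F}_{o\mathscr{D}}(x)(d)=\textbf{F}_{\mathscr{D}}(x)(d)$; for $d\notin T_{S}(x)$ we have $\textbf{F}_{o}(x+\lambda d)=+\infty$ for small $\lambda>0$ and the inequality is vacuous. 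This already gives (c). Next, Lemma \ref{aa}(ii) rewrites the left side of (c) as $\psi^{*}_{\alpha\mathbb{B}\cap N_{\bar{S}}(x)}(d)$ (using that $N_{\bar{S}}(x)$ is a cone so that the factor $\alpha$ is absorbed), while Theorem \ref{17} applied to $\textbf{F}_{o}$ at $x$ rewrites the right side as $\boldsymbol{\psi^{*}_{\boldsymbol{\partial}\textbf{F}_{o}(x)}}(d)$. Because $\textbf{F}_{o\mathscr{D}}(x)(d)=+\infty$ off $T_{S}(x)$, the $T_{S}(x)$-restricted inequality automatically extends to all of $\mathbb{R}^{n}$, and Lemma \ref{lm5} then converts this support-function inequality into the set inclusion (b).

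\emph{Step 2, (c) $\Rightarrow$ (e) $\Rightarrow$ (f) $\Rightarrow$ (a).} Statement (e) is the specialization of (c) to directions $d\in T_{S}(x)\cap N_{\bar{S}}(x)$, where the moreover-clause of Lemma \ref{aa}(ii) gives $\dis(d,T_{\bar{S}}(x))=\lVert d\rVert$. For (e) $\Rightarrow$ (f), given $y\in S$ and $p\in P(y\,|\,\bar{S})$, the projection property yields $y-p\in N_{\bar{S}}(p)$; convexity of $S$ together with $p,y\in S$ gives the segment $[p,y]\subseteq S$, so $y-p\in T_{S}(p)$; and $\lVert y-p\rVert=\dis(y,\bar{S})$, so (e) applied with $x=p$ and $d=y-p$ is precisely (f). For (f) $\Rightarrow$ (a), Lemma \ref{lm3} gives $\textbf{F}_{\mathscr{D}}(p)(y-p)\preceq\textbf{F}(y)\ominus_{gH}\textbf{F}(p)$; combining with (f) and Lemma \ref{lm4}(i) yields $\textbf{F}(p)\oplus\alpha\,\dis(y,\bar{S})\preceq\textbf{F}(y)$ for every $y\in S$. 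Specializing $y\in\bar{S}$ and swapping the roles of $p$ and $y$ forces $\textbf{F}$ to be constant on $\bar{S}$, so the inequality holds with every $\bar{x}\in\bar{S}$ in place of $p$, which is (a).

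\emph{Step 3, (b) $\Leftrightarrow$ (d), and principal obstacles.} The implication (b) $\Rightarrow$ (d) is immediate by taking unions on both sides. For (d) $\Rightarrow$ (a), given $y\in S\setminus\bar{S}$ and $p\in P(y\,|\,\bar{S})$, the vector $g:=\alpha(y-p)/\lVert y-p\rVert$ lies in $\alpha\mathbb{B}\cap N_{\bar{S}}(p)\subseteq\alpha\mathbb{B}\cap\bigcup_{x\in\bar{S}}N_{\bar{S}}(x)$, so (d) produces $x^{*}\in\bar{S}$ with $g\in\boldsymbol{\partial}\textbf{F}_{o}(x^{*})$; evaluating the subgradient inequality at $z=y$, using that $g\in\mathbb{R}^{n}$ so $(y-x^{*})^{\top}\odot g=\langle y-x^{*},g\rangle$, and invoking constancy of $\textbf{F}$ on $\bar{S}$ recovers the WSM bound. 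The main difficulties I expect are: (i) justifying rigorously that the $T_{S}(x)$-restricted support-function inequality in Step~1 genuinely extends to all of $\mathbb{R}^{n}$ before Lemma \ref{lm5} is applied, which hinges on showing $\boldsymbol{\psi^{*}_{\boldsymbol{\partial}\textbf{F}_{o}(x)}}(d)=+\infty$ off $T_{S}(x)$ (Theorem \ref{lm1} with $K=T_{S}(x)$ can be used as an alternative route); and (ii) closing the cycle at (d) $\Rightarrow$ (a), because the base point $x^{*}$ supplied by (d) is generically distinct from the projection $p$, so one must first derive the constancy of $\textbf{F}$ on $\bar{S}$ (as in the (f) $\Rightarrow$ (a) step) to bridge the two base points.
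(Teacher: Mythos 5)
Your overall architecture is essentially the paper's: both arguments run on Theorem \ref{k3}, the identity $\dis(d,T_{\bar{S}}(x))=\psi^{*}_{\mathbb{B}\cap N_{\bar{S}}(x)}(d)$ from Lemma \ref{aa}, the duality $\textbf{F}_{o\mathscr{D}}(x)(\cdot)=\boldsymbol\psi^{*}_{\boldsymbol\partial\textbf{F}_{o}(x)}(\cdot)$ from Theorem \ref{17}, Lemma \ref{lm5} to convert support-function inequalities into set inclusions, and Lemma \ref{lm3} together with Lemma \ref{lm4}(\ref{lm41}) to return to statement (a). The reroutings are minor. The paper proves (a)$\iff$(b) directly from Theorem \ref{k3}, where the inequality already holds for all $d\in\mathbb{R}^{n}$, so it never needs the extension from $T_{S}(x)$ to $\mathbb{R}^{n}$ that you flag as obstacle (i); your workaround via Theorem \ref{lm1} with $K=T_{S}(x)$ is workable but unnecessary on that route. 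The paper proves (e)$\implies$(a) and (a)$\iff$(f) separately rather than chaining (e)$\implies$(f)$\implies$(a), though your chain uses exactly the same projection computation. Finally, the paper closes statement (d) by proving (d)$\implies$(b) — transferring the subgradient from the point $\bar{y}$ supplied by (d) back to the given $x\in\bar{S}$ via the identity $\langle\widehat{{G}},x\rangle=\langle\widehat{{G}},\bar{y}\rangle$ forced by $\widehat{{G}}\in N_{\bar{S}}(x)$ — rather than going straight to (a) as you do; either closure of the cycle is acceptable.

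The one step that does not work as you describe is the derivation of constancy of $\textbf{F}$ on $\bar{S}$. For $y\in\bar{S}$ the projection $P(y~|~\bar{S})$ is $y$ itself and $\dis(y,\bar{S})=0$, so ``specializing $y\in\bar{S}$ and swapping the roles of $p$ and $y$'' yields only the tautology $\textbf{F}(y)\preceq\textbf{F}(y)$ and gives no comparison between the values of $\textbf{F}$ at two distinct points of $\bar{S}$. Indeed, statements (e) and (f) constrain $\textbf{F}$ only in directions lying in $N_{\bar{S}}(x)$, i.e., transversal to $\bar{S}$, so they cannot by themselves force constancy along $\bar{S}$. You are right that this constancy is exactly what is needed to upgrade the inequality obtained at the single base point $p$ (or $x^{*}$) to the inequality at every $\bar{x}\in\bar{S}$ demanded by Definition \ref{k4}; be aware, however, that the paper does not derive it either — it is used silently in the conclusion of (e)$\implies$(a) and in the assertion $\textbf{F}_{o}(z)=\textbf{F}_{o}(\bar{y})$ for $z\in\bar{S}$ inside the proof of (d)$\implies$(b). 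So your proposal matches the published proof in rigor on every implication except this one, where the explicit but vacuous fix should either be replaced by an added hypothesis (e.g., that $\bar{S}$ consists of minimizers of $\textbf{F}$ over $S$, which makes constancy automatic) or removed to match the paper's implicit convention.
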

\begin{proof} 
(\ref{st1})$\iff$(\ref{st2}). Let $x\in\bar{S}$. By hypothesis, $\bar{S}$ is a set of WSM of $\textbf{F}$ over $S$. Therefore, by Theorem \ref{k3}, we get
\[
 \alpha \dis(d,T_{\bar{S}}(x))\preceq\textbf{F}_{o\mathscr{D}}(x)(d) ~\text{for all}~ d\in {\mathbb{R}}^n,
\]
which along with Theorem  \ref{17} imply 
\begin{equation}\label{eq5}
\alpha \dis(d,T_{\bar{S}}(x))\preceq\boldsymbol\psi^{*}_{\boldsymbol \partial \textbf{F}_{o}({x})}(d) ~\text{for all} ~d\in {\mathbb{R}}^n.
\end{equation}
Notice that {for all} $x\in \bar{S}$ {and} $d \in {\mathbb{R}^{n}}$, we have  
\begin{eqnarray*} 
\alpha \dis(d,T_{\bar{S}}(x))&=&\alpha\psi^{*}_{\mathbb{B}\cap N_{\bar{S}}(x)}(d)~\text{by (\ref{dst4}) of Lemma \ref{aa}}\\&=&
\alpha\sup\langle z,d\rangle, \text{ where } z \in \mathbb{B}\cap N_{\bar{S}}(x)\\&=& \sup\langle\alpha z,d\rangle, \text{ where}~  z \in \mathbb{B}\cap N_{\bar{S}}(x) ~\text{and}~\alpha>0\\&=&\sup\langle z,d\rangle, \text{ where } z \in \alpha\mathbb{B}\cap N_{\bar{S}}(x)\\&=&\psi^{*}_{\alpha\mathbb{B}\cap N_{\bar{S}}(x)}(d)~ \text{for all} ~d\in {\mathbb{R}}^n.
\end{eqnarray*}
That is, 
\begin{equation} \label{eq12}
    \alpha \dis(d,T_{\bar{S}}(x))=\psi^{*}_{\alpha\mathbb{B}\cap N_{\bar{S}}(x)}(d)~ \text{for all}  ~d\in {\mathbb{R}}^n.
\end{equation}
Thus, by (\ref{eq5}) and (\ref{eq12}), we get
\begin{equation}\label{eq7}
    \psi^{*}_{\alpha\mathbb{B}\cap N_{\bar{S}}(x)}(d)\preceq\boldsymbol\psi^{*}_{\boldsymbol \partial \textbf{F}_{o}({x})}(d).
\end{equation}
 Next, with the help of Lemma \ref{lm5}, we get the desired result
\begin{equation} \label{eq9}
    \alpha\mathbb{B}\cap N_{\bar{S}}(x)\subseteq\boldsymbol \partial \textbf{F}_{o}(x)~ \text{for all}~ d\in {\mathbb{R}}^n.
\end{equation}
Conversely, we have
\begin{eqnarray}
&&\alpha\mathbb{B}\cap N_{\bar{S}}(x)\subseteq\boldsymbol \partial \textbf{F}_{o}(x)  \text{ for all } x\in\bar{S}\\&\implies&\psi^{*}_{\alpha\mathbb{B}\cap N_{\bar{S}}(x)}(d)\preceq\boldsymbol\psi^{*}_{\boldsymbol \partial \textbf{F}_{o}({x})}(d)~\text{for all} ~d\in {\mathbb
R}^n~\text{by Lemma \ref{g40}}\nonumber\\&\implies& \alpha \dis(d,T_{\bar{S}}(x)) \preceq \boldsymbol\psi^{*}_{\boldsymbol \partial \textbf{F}_{o}({x})}(d) ~\text{for all} ~d\in {\mathbb{R}}^n ~\text{by}~ (\ref{eq12}).\label{eqq1}
\end{eqnarray}
Also, by Theorem \ref{17}, we have 
$$\boldsymbol\psi^{*}_{\boldsymbol \partial \textbf{F}_{o}({x})}(d)=\textbf{F}_{o\mathscr{D}}({x})(d)~ \text{for all}~d\in {\mathbb{R}}^n. $$
Thus, by (\ref{eqq1}), we get
$$ \alpha \dis(d,T_{\bar{S}}(x))\preceq \textbf{F}_{o\mathscr{D}}({x})(d)~ \text{for all}~d\in {\mathbb{R}}^n.$$
Therefore, by Theorem \ref{k3}, $\bar{S}$ is a set of WSM of \textbf{F} over $S$ with modulus $\alpha$.  
$\\$
(\ref{st1})$\iff$(\ref{st3}). Let the statement (\ref{st1}) holds. Let $ x\in\bar{S}$. Therefore, by Theorem \ref{k3}, we have
\[
\alpha \dis(d,T_{\bar{S}}(x))\preceq\textbf{F}_{o\mathscr{D}}({x})(d)~ \text{for all}~d\in T_{S}(x).
\]
Note that for $x\in\bar{S}$, $\textbf{F}_{o}(x) = \textbf{F}(x)$. Thus, 
\begin{equation}\label{eqq2}
    \textbf{F}_{o\mathscr{D}}(x)(d)=\textbf{F}_{\mathscr{D}}(x)(d)~ \text{for} ~x\in\bar{S}
~\text{and}~ d\in T_{S}(x).
\end{equation}
By (\ref{eqq2}) and Theorem \ref{k3}, we get
\begin{equation*}\label{eq10}
    \alpha \dis(d,T_{\bar{S}}(x))\preceq\textbf{F}_{\mathscr{D}}({x})(d)~ \text{for all}~d\in T_{S}(x) ~\text{and}~ x\in \bar{S}.
\end{equation*}
Conversely, we are given that  
\begin{eqnarray}\label{eq67}
&\alpha \dis(d,T_{\bar{S}}(x))\preceq \textbf{F}_{\mathscr{D}}(x)(d)~\text{for all}~x\in \bar{S}~ \text{and}~ d\in T_{S}(x)\nonumber\\\implies&  \alpha \dis(d,T_{\bar{S}}(x))\preceq\boldsymbol\psi^{*}_{\boldsymbol \partial \textbf{F}(x)}(d) ~\text{for all} ~d\in T_{S}(x)~ \text{by Theorem \ref{17}}.
\end{eqnarray}
Note that for $x\in\bar{S}$, we have
\begin{align}\label{eq25}
    \boldsymbol\psi^{*}_{\boldsymbol \partial \textbf{F}(x)}(d) =
\boldsymbol\psi^{*}_{\boldsymbol \partial \textbf{F}_{o}(x)}(d) ~\text{for all}~ d\in {\mathbb{R}}^n.
\end{align}
In view of  (\ref{eq67}) and (\ref{eq25}), we have 
\begin{eqnarray*}
&&\alpha\dis(d,T_{\bar{S}}(x))\preceq\boldsymbol\psi^{*}_{\boldsymbol \partial \textbf{F}_{o}(x)}(d) ~\text{for all} ~d\in T_{S}(x)\\
&\implies&\alpha\dis(d,T_{\bar{S}}(x))\preceq\textbf{F}_{o\mathscr{D}}({x})(d)~ \text{for all}~d\in T_{S}(x) \text{ by Theorem }\ref{17}.
\end{eqnarray*}
Hence, by Theorem \ref{k3}, $\bar{S}$ is the set of WSM of \textbf{F} over S with modulus $\alpha>0$.
$\\$
(\ref{st2})$\iff$(\ref{st4}). If the statement (\ref{st2}) holds, then obviously the statement (\ref{st4}) also holds.
$\\$
 Conversely, let the statement (\ref{st4}) holds. Let $x\in \bar{S}$ and
$\widehat{{G}}\in \alpha\mathbb{B}\cap N_{\bar{S}}(x)$. Therefore, there exists a $\bar{y}\in \bar{S}$ such that $\widehat{{G}}\in \boldsymbol \partial \textbf{F}_{o}(\bar{y})$. Thus, by Definition \ref{dd5}, we get
\begin{equation}\label{eq17}
(z-\bar{y})^{\top}\odot\widehat{{G}}\preceq\textbf{F}_{o}(z)\ominus_{gH}\textbf{F}_{o}(\bar{y}) \text{ for all} ~z\in \mathbb{R}^{n}.\\
\end{equation}
In particular, for any $z\in \bar{S}$,    ~$\textbf{F}_{o}(z)=\textbf{F}_{o}(\bar{y})$. Thus, (\ref{eq17}) reduces to 
\[
(z-\bar{y})^{\top}\odot\widehat{{G}} \preceq\textbf{0} \text{ for all }z\in \bar{S}.
\]
Since $\widehat{{G}}\in\mathbb{R}^{n}$, by using Remark \ref{n1},
$(z-\bar{y})^{\top}\odot\widehat{{G}}=\left\langle\widehat{{G}},z-\bar{y}\right\rangle\leq{0} \text{  for all } z\in \bar{S}$.
Therefore,
\begin{align}\label{eq18}
    &\left\langle\widehat{{G}}, z\right\rangle\leq \left\langle \widehat{{G}}, \bar{y}\right\rangle \text{  for all } z\in \bar{S}\nonumber\\
    \implies& \sup\limits_{z\in\bar{S}} \left\langle\widehat{{G}},z\right\rangle\leq\left\langle \widehat{{G}}, \bar{y}\right\rangle\nonumber\\
    \implies&\psi^{*}_{\bar{S}}(\widehat{{G}})=\left\langle\widehat{{G}},\bar{y}\right\rangle \text{ because } \bar{y}\in\bar{S}.
    \end{align}
 Since $\widehat{{G}}\in N_{\bar{S}}(x)$, by Definition \ref{d2}, we have
\begin{align}\label{eq19}
 &\left\langle\widehat{{G}},z-x\right\rangle\leq{0} \text{  for all } z\in \bar{S}\nonumber\\
 \implies& \psi^{*}_{\bar{S}}(\widehat{{G}})=\left\langle\widehat{{G}},x\right\rangle.
\end{align}
Combining (\ref{eq18}) and (\ref{eq19}), we get
\begin{align}\label{eq69}
 & \left\langle\widehat{{G}},x\right\rangle=\left\langle\widehat{{G}},\bar{y}\right\rangle.
\end{align}
Note that 
\begin{eqnarray*}
(z-x)^{\top}\odot\widehat{{G}}&=&\left\langle\widehat{{G}},z-x\right\rangle~\text{for all} ~z\in \mathbb{R}^{n}\\
&=&\left\langle\widehat{{G}},z-\bar{y}\right\rangle~\text{for all} ~z\in \mathbb{R}^{n} \text{ by } (\ref{eq69})\\
&=& (z-\bar{y})^{\top}\odot\widehat{{G}} ~\text{for all} ~z\in \mathbb{R}^{n} \text{ by Remark } \ref{n1}\\
&\preceq&\textbf{F}_{o}(z)\ominus_{gH}\textbf{F}_{o}(\bar{y}) \text{ for all} ~z\in \mathbb{R}^{n} \text{ by } (\ref{eq17})\\
&=&\textbf{F}_{o}(z)\ominus_{gH}\textbf{F}_{o}(x) \text{ for all} ~z\in \mathbb{R}^{n} \text{ becuase } \textbf{F}(x)=\textbf{F}(\bar{y}). 
\end{eqnarray*}
Hence, $\widehat{{G}}\in\boldsymbol \partial \textbf{F}_{o}(x)$. Since $x\in \bar{S}$ is arbitrary, the  statement (\ref{st2}) holds.
$\\$
(\ref{st3})$\implies$(\ref{st5}). 
From the statement (\ref{st3}), we have
\begin{eqnarray*}
&&\alpha \dis(d,T_{\bar{S}}(x))\preceq \textbf{F}_{\mathscr{D}}(x)(d) \text{ for all }d\in T_{S}(x) \text{ and } x\in \bar{S}\\
&\implies& \alpha\lVert d\rVert\preceq \textbf{F}_{\mathscr{D}}(x)(d) \text{ for all } d\in T_{S}(x)\cap N_{\bar{S}}(x) \text{ by (\ref{dst4})} \text{ of Lemma }\ref{aa}.
\end{eqnarray*}
 Hence, the statement (\ref{st5}) holds.
  $\\$
(\ref{st5})$\implies$(\ref{st1}). Let $y\in S$. Set ${x}=P(y~|~\bar{S})$, then $(y-{x})\in T_{S}(x)\cap N_{\bar{S}}(x)$. Therefore, according to the hypothesis, we obtain
\begin{eqnarray*}
&&\alpha\lVert y-x\rVert\preceq\textbf{F}_{\mathscr{D}}(x)(y-x)\\&\implies&\alpha \dis(y,\bar{S})\preceq\textbf{F}_{\mathscr{D}}(x)(y-x)\text{ by Definition } \ref{dd6}\\
&\implies& \alpha \dis(y,\bar{S})\preceq\textbf{F}(y)\ominus_{gH}\textbf{F}(x)~ \text{by Lemma } \ref{lm3}\\
&\implies& \textbf{F}(x)\oplus\alpha \dis(y,\bar{S})\preceq\textbf{F}(y) \text{ by  (\ref{lm41}) of Lemma \ref{lm4}},
\end{eqnarray*}
which shows that $\bar{S}$ is a set of WSM of \textbf{F} over $S$.
$\\$
(\ref{st1})$\iff$(\ref{st7}). Let the statement (\ref{st1}) holds. Let $y\in S$ and $p=P(y~|~\bar{S})$. Thus, the statement (\ref{st1}) gives
\begin{equation}\label{eq72}
\textbf{F}(p)\oplus\alpha \dis(y,\bar{S})\preceq\textbf{F}(y),  ~\text{i.e.,}~\textbf{F}(p)\oplus\alpha\lVert y-p\rVert\preceq\textbf{F}(y). 
\end{equation} 
Define $z_{\lambda}=\lambda y+ (1-\lambda)p$ for $\lambda\in[0,1].$ Then, $p=P(z_{\lambda}~|~\bar{S})$ for all $\lambda\in[0,1]$. From (\ref{eq72}), we have
\begin{eqnarray}\label{eq73}
&&\textbf{F}(p)\oplus\alpha\lVert z_{\lambda}-p\rVert\preceq\textbf{F}(z_{\lambda})\nonumber\\&\implies&\textbf{F}(p)\oplus\alpha\lambda\lVert y-p\rVert\preceq\textbf{F}(z_{\lambda})\nonumber\\
&\implies&\alpha\lVert x-p\rVert\preceq\frac{1}{\lambda}\odot\Big(\textbf{F}(p+\lambda(y-p))\ominus_{gH}\textbf{F}(p)\Big).
\end{eqnarray}
By taking limit as $\lambda \downarrow 0$ in (\ref{eq73}), we get 
\[
\alpha \dis(y,\bar{S})\preceq\textbf{F}_{\mathscr{D}}(p)(y-p), \text{ where } p\in P(y~|~\bar{S}).
\]
Conversely, let $y\in S$ and set ${x}=P(y~|~\bar{S})$. Then, from the statement (\ref{st7}),  we get
\begin{eqnarray*}
&&\alpha \dis(y,\bar{S})\preceq\textbf{F}_{\mathscr{D}}(x)(y-x)\\
&\implies& \alpha \dis(y,\bar{S})\preceq\textbf{F}(y)\ominus_{gH}\textbf{F}(x)\text{ by Lemma }\ref{lm3}\\
&\implies& \textbf{F}(x)\oplus\alpha \dis(y,\bar{S})\preceq\textbf{F}(y) \text{ by (\ref{lm41})} \text{ of Lemma }\ref{lm4},
\end{eqnarray*}
which is the required result.
\end{proof}
\section{Conclusion and future scopes}\label{Sec5}
In this article, the conventional concepts of support function and subdifferentiability have been extended for IVFs (Definitions \ref{dd4} and Definition \ref{dd5}). Also, some important characteristics of the $gH$-subdifferential set like nonemptyness (Lemma \ref{lm6}), boundedness (Theorem \ref{thm6}), convexity and closedness (Theorem \ref{kg9}) have been presented. Subsequently, we have provided few necessary results (Lemma \ref{g40}, Theorem \ref{lm1} and Lemma \ref{lm5}) based on the support function of a subset of $I(\mathbb{R})^{n}$. It has been reported that the $gH$-subdifferential set of a $gH$-differentiable convex IVF is a singleton set containing the $gH$-gradient (Theorem \ref{th5}). The relationship between $gH$-directional derivative and the support function of $gH$-subdifferential set of convex IVF has been also established (Theorem \ref{17}). Further, we have introduced the notion  of WSM for convex IVFs (Definition \ref{k4}). With the help of the proposed concepts of $gH$-subdifferentiability and support function, a primal characterization (Theorem \ref{k3}) and a few dual characterizations (Theorem \ref{th2}) of WSM have been presented.
\\

In future, we shall apply proposed theory on WSM to derive necessary and sufficient conditions under which a global error bound may exist for a convex inequality system as follows:
\begin{equation}\label{eq80}
    \textbf{H}_{\lambda}(x)\preceq\textbf{0}, ~\lambda\in\Lambda \text{ and } x\in C,
\end{equation}
where $\Lambda$ is an index set, and for each $\lambda\in\Lambda$, $\textbf{H}_{\lambda}: X\subseteq\mathbb{R}^{n}\rightarrow I(\mathbb{R})$ is $gH$-lsc, convex, proper, and the set $C$ is closed convex subset of $X$.
By a global error bound for the inequality system (\ref{eq80}), we mean the existence of a constant $\beta>0$ such that
\begin{equation}\label{eq81}
    \beta\dis\left(x,\Omega\right)\preceq\dis(x,C)\oplus \textbf{H}_{\lambda+}(x)  \text{ for each } \lambda\in\Lambda \text{ and } x\in X,
\end{equation}
where $\Omega =\{x:x\in C \text{ and } \textbf{H}_{\lambda}(x)\preceq\textbf{0}\}$
and $\textbf{H}_{\lambda+}(x)=\max\{\textbf{0},\textbf{H}_{\lambda}(x)\}.$ For the sake of convenience, we define $\textbf{H}(x)=\sup\left\{\textbf{H}_{\lambda}(x):\lambda\in\Lambda\right\}$ for each $x\in X$. Note that if a constant $\bar\beta>0$ exists such that 
\begin{equation}\label{eq82}
    \bar\beta\dis\left(x,\Omega\right)\preceq\dis(x,C)\oplus \textbf{H}_{+}(x) \text{ for all } x\in {X},
\end{equation}
where $\bar{\Omega} =\{x:x\in C \text{ and } \textbf{H}(x)\preceq\textbf{0}\}$
and $\textbf{H}_{+}(x)=\max\{\textbf{0},\textbf{H}(x)\} $, then (\ref{eq81}) holds for $\beta>0$.
The following observation can be useful to solve the problem. If we define an IVF $\textbf{F}:{X}\rightarrow {\overline{I(\mathbb{R})}}$ such that
\[
\textbf{F}(x)=\dis(x,C)\oplus\textbf{H}_{+}(x),
\]
then \textbf{F} has $\Omega$ as a set of WSM with modulus $\bar\beta>0$, which is equivalent to the condition in (\ref{eq82}).
\appendix
\section{Proof of Lemma \ref{lm4}}
\label{aa1}(\ref{lm41}).
	Let $\textbf{A}=[\underline{a},\overline{a}], \textbf{B}=[\underline{b},\overline{b}], \text{ and } \textbf{C}=[\underline{c},\overline{c}]$. We have 
	\begin{equation}\label{eq20}
	    r\leq\underline{a} \text{ and } r\leq\overline{a}.
	    \end{equation}
	Similarly, by $\textbf{A}\preceq\textbf{B}\ominus_{gH}\textbf{C}$, we have
	\begin{equation}\label{eq21}
	    	\underline{a}\leq\min\left\{\underline{b}-\underline{c},\overline{b}-\overline{c}\right\} \text{and }  \overline{a}\leq\max\left\{\underline{b}-\underline{c},\overline{b}-\overline{c}\right\}.
	\end{equation}
	\begin{enumerate}[$\bullet$ \text{Case} 1.]
			\item\label{apcs1} Let $\min\left\{\underline{b}-\underline{c},\overline{b}-\overline{c}\right\}=\overline{b}-\overline{c}$  and $\max\left\{\underline{b}-\underline{c},\overline{b}-\overline{c}\right\}=\underline{b}-\underline{c}$. Then, from  (\ref{eq20})  and (\ref{eq21}), we get 
\[			
\overline{c}+r\leq\overline{b} \text{ and }
\underline{c}+r\leq\underline{b}.\]
Hence,
$\textbf{C}\oplus[r,r]\preceq\textbf{B}.$
\item When $\min\left\{\underline{b}-\underline{c},\overline{b}-\overline{c}\right\}=\underline{b}-\underline{c}$  and $\max\left\{\underline{b}-\underline{c},\overline{b}-\overline{c}\right\}=\overline{b}-\overline{c}$.
Proof contains similar steps as in Case \ref{apcs1}.
\end{enumerate}

(\ref{lm43}). Let $\textbf{A}=[\underline{a},\overline{a}]$ and $\textbf{B}=[\underline{b},\overline{b}].$ Then, 
\begin{eqnarray*}
\Big\{(1-\lambda)\odot\textbf{A}\oplus\lambda\textbf{B}\Big\}\ominus_{gH}\textbf{A}&=&\Big\{(1-\lambda)\odot[\underline{a},\overline{a}]\oplus\lambda\odot[\underline{b},\overline{b}]\Big\}\ominus_{gH}[\underline{a},\overline{a}]\\
&=&\left[(1-\lambda)\underline{a}+\lambda\underline{b},(1-\lambda)\overline{a}+\lambda\overline{b}\right]\ominus_{gH}[\underline{a},\overline{a}\Big]~\text{because }\lambda\in[0,1]\\
&=&\Big[\min\Big\{\lambda\underline{b}-\lambda\underline{a},\lambda\overline{b}-\lambda\overline{a}\Big\},\max\Big\{\lambda\underline{b}-\lambda\underline{a},\lambda\overline{b}-\lambda\overline{a}\Big\}\Big]\\
&=&\lambda\odot\Big\{\textbf{A}\ominus_{gH}\textbf{B}\Big\}.
\end{eqnarray*}
		\noindent 

\section*{Funding}
Not applicable.

\section*{Author contributions}
All authors contributed to the study conception and analysis. Material preparation and analysis were
performed by Krishan Kumar, Debdas Ghosh, and Gourav Kumar. The
first draft of the manuscript was written by Krishan Kumar and all authors commented on previous
versions of the manuscript. All authors read and approved the final manuscript.

\section*{Conflict of interest}

 The authors declare that they have no known competing financial interests or personal relationships
that could have appeared to influence the work reported in this paper.
\section*{Availability of data and materials}
Not applicable.
\section*{Code availability}
Not applicable.
\bibliographystyle{spmpsci}
\bibliography{KK.bib}


%
%

\end{document}